\documentclass{amsart}

\usepackage[english]{babel}
\usepackage{hyperref}
\usepackage{mathrsfs}
\usepackage{color}
\usepackage{tikz}
\usepackage{pgfplots}

\pgfplotsset{compat=1.16} 
\usepgfplotslibrary{fillbetween}

\numberwithin{equation}{section} 
\numberwithin{figure}{section} 

\theoremstyle{plain} 
\newtheorem{theorem}{Theorem}[section] 
\newtheorem{lemma}[theorem]{Lemma} 
\newtheorem{corollary}[theorem]{Corollary}

\theoremstyle{definition} 
\newtheorem*{definition}{Definition} 
\newtheorem{example}[theorem]{Example} 
\newtheorem{problem}[theorem]{Problem}

\DeclareMathOperator{\mre}{Re} 
\DeclareMathOperator{\ind}{ind}

\begin{document} 
	\title[Idempotent Fourier multipliers acting contractively on $H^p$ 
	spaces]{Idempotent Fourier multipliers \\ acting contractively on $H^p$ 
		spaces} 
	\date{\today} 
	
	\author{Ole Fredrik Brevig} 
	\address{Department of Mathematics, University of Oslo, 0851 Oslo, Norway} 
	\email{obrevig@math.uio.no}
	
	\author{Joaquim Ortega-Cerd\`{a}} 
	\address{Department de Matem\`{a}tiques i Inform\`{a}tica, Universitat de 
		Barcelona \& Barcelona Graduate school in mathematics, Gran Via 585, 
		08007 Barcelona, Spain} 
	\email{jortega@ub.edu}
	
	\author{Kristian Seip} 
	\address{Department of Mathematical Sciences, Norwegian University of 
		Science and Technology (NTNU), NO-7491 Trondheim, Norway} 
	\email{kristian.seip@ntnu.no} 
	\begin{abstract}
		We describe the idempotent Fourier multipliers that act contractively 
		on $H^p$ spaces of the $d$-dimensional torus $\mathbb{T}^d$ for $d\geq 
		1$ and $1\leq p \leq \infty$. When $p$ is not an even integer, such 
		multipliers are just restrictions of contractive idempotent multipliers 
		on $L^p$ spaces, which in turn can be described by suitably combining 
		results of Rudin and And\^{o}. When $p=2(n+1)$, with $n$ a positive 
		integer, contractivity depends in an interesting geometric way on $n$, 
		$d$, and the dimension of the set of frequencies associated with the 
		multiplier. Our results allow us to construct a linear operator that is 
		densely defined on $H^p(\mathbb{T}^\infty)$ for every $1 \leq p \leq 
		\infty$ and that extends to a bounded operator if and only if 
		$p=2,4,\ldots,2(n+1)$. 
	\end{abstract}
	
	\subjclass[2020]{Primary 42B30. Secondary 30H10, 42A45, 42B15.}
	
	\thanks{Ortega-Cerd\`{a} was partially supported by the Generalitat de 
		Catalunya (grant 2017 SGR 358) and the Spanish Ministerio de Ciencia, 
		Innovaci\'on y Universidades (project MTM2017-83499-P). Seip was 
		supported in part by the Research Council of Norway grant 275113}
	
	\maketitle

	\section{Introduction} This paper grew out of an attempt to clarify the 
	precise scope and nature of certain contractive inequalities that have 
	proven useful in the study of the Hardy spaces $H^p(\mathbb{T}^d)$ when 
	$d\geq 1$ and $1\leq p \leq \infty$. The inequalities in question can best 
	be seen as instances of idempotent Fourier multipliers that act 
	contractively on $H^p(\mathbb{T}^d)$, and our main purpose will therefore 
	be to describe such multipliers. 
	
	Since any Fourier multiplier on $L^p(\mathbb{T}^d)$ induces a Fourier 
	multiplier on $H^p(\mathbb{T}^d)$, it is natural to begin with the easier 
	problem of describing idempotent Fourier multipliers acting contractively 
	on $L^p(\mathbb{T}^d)$. To this end, we represent functions $f$ in 
	$L^p(\mathbb{T}^d)$ by their Fourier series $f(z) \sim \sum_{\alpha \in 
		\mathbb{Z}^d} \widehat{f}(\alpha)\,z^\alpha$, where
	\[\widehat{f}(\alpha) := \int_{\mathbb{T}^d} f(z) 
	\,\overline{z^\alpha}\,dm_d(z)\]
	and $m_d$ denotes the Haar measure of the $d$-dimensional torus 
	$\mathbb{T}^d$. For $\Lambda$ a non-empty subset of $\mathbb{Z}^d$, we 
	consider the operator $P_{\Lambda}$ that is densely defined on 
	$L^p(\mathbb{T}^d)$ by the rule
	\[P_\Lambda f(z) := \sum_{\alpha \in \Lambda} \widehat{f}(\alpha) 
	z^\alpha.\]
	The operator $P_\Lambda$ is an idempotent Fourier multiplier, since it 
	corresponds to pointwise multiplication of the Fourier coefficients 
	$\widehat{f}(\alpha)$ by the characteristic function of $\Lambda$. We will 
	say that $\Lambda$ is a \emph{contractive projection set for} 
	$L^p(\mathbb{T}^d)$ when $P_\Lambda$ extends to a contraction on 
	$L^p(\mathbb{T}^d)$. Following Rudin \cite{Rudin}, we say that a subset 
	$\Lambda$ of $\mathbb{Z}^d$ is a \emph{coset} in $\mathbb{Z}^d$ if 
	$\Lambda$ is equal to the coset of a subgroup of $(\mathbb{Z}^d,+)$. The 
	following result can be deduced by suitably combining arguments and results 
	due to Rudin \cite{Rudin} and And\^{o} \cite{Ando66}. Note that the case 
	$p=2$ is omitted in the statement, since every non-empty subset of 
	$\mathbb{Z}^d$ is trivially a contractive projection set for 
	$L^2(\mathbb{T}^d)$.
	\begin{theorem}\label{thm:Tdproj} 
		Let $d$ be a non-negative integer and fix $1 \leq p \leq \infty$, 
		$p\neq 2$. A subset $\Lambda$ of $\mathbb{Z}^d$ is a contractive 
		projection set for $L^p(\mathbb{T}^d)$ if and only if $\Lambda$ is a 
		coset in $\mathbb{Z}^d$. 
	\end{theorem}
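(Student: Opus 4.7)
The plan is to split into the sufficiency and necessity directions, with the necessity further split according to $p$.

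\emph{Sufficiency.} Write $\Lambda = \alpha_0 + H$ for a subgroup $H \leq \mathbb{Z}^d$, and let $H^\perp := \{w \in \mathbb{T}^d : w^\alpha = 1 \text{ for every } \alpha \in H\}$ denote its annihilator, a closed subgroup of $\mathbb{T}^d$. Pontryagin duality gives the representation
\[P_H f(z) = \int_{H^\perp} f(zw)\,dm_{H^\perp}(w),\]
exhibiting $P_H$ as an averaging (conditional-expectation) operator; Jensen's inequality then makes $P_H$ contractive on every $L^p(\mathbb{T}^d)$. Since $P_\Lambda f = z^{\alpha_0} P_H(z^{-\alpha_0} f)$ and modulation by $z^{\alpha_0}$ is isometric, $P_\Lambda$ inherits the contraction property.

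\emph{Necessity for $p \in \{1,\infty\}$.} Because the symbol $\chi_\Lambda$ is real, $P_\Lambda$ is formally self-adjoint, so contractivity on $L^1$ is equivalent to contractivity on $L^\infty$, and I reduce to $p = \infty$. A translation-invariant contraction on $L^\infty(\mathbb{T}^d)$ is necessarily convolution with some $\mu \in M(\mathbb{T}^d)$ of total variation at most $1$ satisfying $\widehat{\mu} = \chi_\Lambda$. The idempotency $\chi_\Lambda^2 = \chi_\Lambda$ gives $\mu \ast \mu = \mu$, so $\mu$ is a norm-one idempotent measure. Rudin's classification \cite{Rudin} of such measures on compact abelian groups then forces $\mu = z^{\alpha_0} m_K$ for a closed subgroup $K \leq \mathbb{T}^d$, whence $\Lambda = \alpha_0 + K^\perp$ is a coset.

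\emph{Necessity for $1 < p < \infty$, $p \neq 2$.} Here $P_\Lambda$ is a contractive projection on $L^p(\mathbb{T}^d)$, and I appeal to And\^o's theorem \cite{Ando66}, which describes every such projection as multiplication by a weight composed with a conditional expectation onto a sub-$\sigma$-algebra. The translation invariance of $P_\Lambda$, i.e.\ commutation with every rotation $f \mapsto f(z_0\cdot)$, forces the sub-$\sigma$-algebra to be the algebra of $K$-invariant Borel sets for some closed subgroup $K \leq \mathbb{T}^d$ and collapses the weight to multiplication by a single character $z^{\alpha_0}$, giving $\Lambda = \alpha_0 + K^\perp$. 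The main technical difficulty lies precisely here: translating And\^o's abstract measure-theoretic ansatz into group-theoretic data on $\mathbb{T}^d$ via translation invariance. The remaining cases reduce to direct computation and classical results on idempotent measures.
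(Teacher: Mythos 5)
Your sufficiency argument coincides with the paper's (average over the annihilator of the subgroup, then Minkowski/Jensen), and your treatment of $p=1,\infty$ via norm-one idempotent measures is a legitimate route that the paper only alludes to by citing the relevant sections of Rudin's monograph. Be aware, though, that the paper's \emph{featured} necessity proof is entirely different and elementary: it perturbs the explicit test functions $1+\varepsilon z$ and $1+\varepsilon(z_1+z_2)$ (the two lemmas preceding the necessity proof) to show directly that a contractive projection set containing $0$ is closed under negation and addition, with a modified Euclidean algorithm handling collinear frequencies. That hands-on approach is the one that later generalizes to the $H^p$ problem, which is the point of the paper.

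The genuine gap is in your case $1<p<\infty$, $p\neq 2$. You invoke And\^{o}'s description of contractive projections as weighted conditional expectations and then assert that translation invariance forces the sub-$\sigma$-algebra to be the algebra of $K$-invariant sets and collapses the weight to a character --- but you explicitly defer this step as ``the main technical difficulty'' and never carry it out, so as written this is not a proof. The clean way to use And\^{o}, and the one the paper indicates, is his Theorem~1 in the cited form: first translate $\Lambda$ so that $0\in\Lambda$ (harmless, since modulation by $z^\lambda$ is an isometry intertwining $P_\Lambda$ and $P_{\Lambda-\lambda}$), so that $P_\Lambda$ fixes the constants; And\^{o}'s theorem then says that a contractive projection on $L^p$, $1<p<\infty$, $p\neq 2$, fixing the constants extends to a contractive projection on $L^1$. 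This reduces the entire range $1<p<\infty$, $p\neq 2$ to the $L^1$ case you have already settled, with no need to analyze the conditional expectation at all. (The paper then recovers $p=\infty$ by Riesz--Thorin interpolation between $L^2$ and $L^\infty$ followed by the $2<p<\infty$ case; your direct measure-theoretic argument for $p=\infty$ is an acceptable alternative.)
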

	
	Theorem~\ref{thm:Tdproj} has a striking bearing on the question of when 
	$P_\Lambda$ extends to a bounded operator on $L^1(\mathbb T^d)$. Indeed, 
	results of Helson \cite{Helson53} in dimension $1$ and Rudin \cite{Rudin59} 
	in higher dimensions show that $P_\Lambda$ defines a bounded linear 
	operator on $L^1(\mathbb{T}^d)$ if and only if $\Lambda = \bigcup_{k=1}^n 
	\Lambda_k$, where $\Lambda_1,\ldots,\Lambda_n$ are cosets of 
	$\mathbb{Z}^d$. By a celebrated paper of Cohen \cite{Cohen60}, this result 
	extends to $L^1(G)$ for $G$ a compact abelian group. It remains however a 
	difficult open problem to describe the sets $\Lambda$ that yield bounded 
	operators $P_{\Lambda}$ on $L^p(\mathbb{T}^d)$ when $p\neq 1,2$.
	
	We mention two examples of frequently encountered inequalities that are 
	covered by Theorem~\ref{thm:Tdproj}. The first of these is an inequality of 
	F.~Wiener that appeared already in Bohr's classical work on what later 
	became known as the Bohr radius \cite{Bohr14}. In our terminology, this is 
	just the case $d=1$ of Theorem~\ref{thm:Tdproj}. See 
	\cite[Sec.~1.7]{MSUV15} for a recent function theoretic application and 
	\cite{BoasKhavinson97} for a $d$-dimensional version of it. The second 
	example inequality deals with the restriction to the $m$-homogeneous terms 
	of a power series in $d$ variables. This is again a special case of 
	Theorem~\ref{thm:Tdproj}, with the dimension of the coset being strictly 
	smaller than the dimension of the ambient space $\mathbb{Z}^d$. We refer to 
	\cite{BQS16}, \cite{BS16} and \cite[Sec. 9]{ColeGamelin86} for respectively 
	an operator, number, and function theoretic application of the
	corresponding contractive inequality.
	
	Our main theorem shows that there are contractive projection sets for 
	$H^p(\mathbb{T}^d)$ that are not covered by Theorem~\ref{thm:Tdproj} when 
	$p$ is an even integer $\geq 4$. To state this result, we recall first that 
	$H^p(\mathbb{T}^d)$ is the subspace of $L^p(\mathbb{T}^d)$ comprised of 
	functions $f$ such that $\widehat{f}(\alpha)=0$ for every $\alpha$ in 
	$\mathbb{Z}^d \setminus \mathbb{N}_0^d$, where $\mathbb{N}_0 := 
	\{0,1,2,\ldots\}$. We will say that a subset $\Gamma$ of $\mathbb{N}_0^d$ 
	is a \emph{contractive projection set for} $H^p(\mathbb{T}^d)$ if 
	$P_\Gamma$ extends to a contraction on $H^p(\mathbb{T}^d)$. Since 
	$H^p(\mathbb{T}^d)$ is a subspace of $L^p(\mathbb{T}^d)$, we get the 
	following immediate consequence of Theorem~\ref{thm:Tdproj}. If $\Lambda$ 
	is a coset in $\mathbb{Z}^d$, then $\Lambda \cap \mathbb{N}_0^d$ is a 
	contractive projection set for $H^p(\mathbb{T}^d)$. We are interested in 
	knowing if there are other contractive projection sets for $H^p(\mathbb 
	T^d)$. It turns out that the dimension of the affine span of $\Gamma$, 
	henceforth called $\dim(\Gamma)$ or the \emph{dimension of} $\Gamma$, 
	plays a nontrivial role in this problem, and we therefore make the 
	following definition. 
	\begin{definition}
		Suppose that $1\leq k \leq d$. We say that $H^p(\mathbb{T}^d)$ enjoys 
		the \emph{contractive restriction property of dimension $k$} if every 
		$k$-dimensional contractive projection set for $H^p(\mathbb{T}^d)$ is 
		of the form $\Lambda \cap \mathbb{N}_0^d$ with $\Lambda$ a coset in 
		$\mathbb Z^d$. 
	\end{definition}
	
	Now our main result reads as follows. 
	\begin{theorem}\label{thm:TdprojHp} 
		Suppose that $1\leq p \leq \infty$. 
		\begin{enumerate}
			\item[(a)] If $d=2$ or $k=1$, then $H^p(\mathbb{T}^d)$ enjoys the 
			contractive restriction property of dimension $k$ if and only if 
			$p\neq 2$. 
			\item[(b)] If either $d=k=3$ or $d\geq 3$ and $k=2$, then 
			$H^p(\mathbb{T}^d)$ enjoys the contractive restriction property of 
			dimension $k$ if and only if $p\neq 2, 4$. 
			\item[(c)] If $d\geq 4$ and $k\geq 3$, then $H^p(\mathbb{T}^d)$ 
			enjoys the contractive restriction property of dimension $k$ if and 
			only if $p$ is not an even integer. 
		\end{enumerate}
	\end{theorem}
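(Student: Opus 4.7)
The proof naturally splits according to whether $p$ is an even integer. For $p\neq 2$ and $p$ not an even integer, the claim in all three parts is that every contractive projection set for $H^p(\mathbb{T}^d)$ has the form $\Lambda\cap\mathbb{N}_0^d$ with $\Lambda$ a coset in $\mathbb{Z}^d$, so this alone would settle the ``only if'' direction at such $p$. The strategy is to reduce to Theorem~\ref{thm:Tdproj}: given a contractive idempotent multiplier $P_S$ on $H^p(\mathbb{T}^d)$ associated with $S\subset\mathbb{N}_0^d$, one seeks to exhibit a contractive extension to $L^p(\mathbb{T}^d)$. This can be attempted by translating $S$ via multiplication by monomials $z^\gamma$ (isometries on $L^p$), passing to an appropriate limit as $\gamma$ moves far into $\mathbb{N}_0^d$; the rigidity of contractive projections on $H^p$ at non-even $p$ should make this extension well defined, after which Theorem~\ref{thm:Tdproj} forces the coset structure.

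For $p=2(n+1)$ the starting point is the identity
\[\|f\|_p^p=\|f^{n+1}\|_2^2=\sum_{\beta\in\mathbb{N}_0^d}\bigl|\widehat{f^{\,n+1}}(\beta)\bigr|^2,\qquad\widehat{f^{\,n+1}}(\beta)=\sum_{\alpha_1+\cdots+\alpha_{n+1}=\beta}\prod_{i=1}^{n+1}\widehat{f}(\alpha_i).\]
Writing this sum as $A_\beta+B_\beta$, where $A_\beta$ runs over tuples $(\alpha_i)\in S^{n+1}$ and $B_\beta$ over the remaining tuples in $(\mathbb{N}_0^d)^{n+1}$, and noting that $\widehat{(P_S f)^{\,n+1}}(\beta)=A_\beta$, contractivity of $P_S$ on $H^p$ is equivalent to $\sum_\beta\bigl(2\,\mre(\overline{A_\beta}B_\beta)+|B_\beta|^2\bigr)\ge 0$ for every $f\in H^p$. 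Probing this with trigonometric polynomials $f=1+\varepsilon\sum c_\alpha z^\alpha$ and extracting leading orders in $\varepsilon$ yields a rigid combinatorial condition on $S$: roughly, every way of representing an element of $S+\cdots+S$ (with $n+1$ summands) by using at least one index outside $S$ must be cancelled by matching representations with all indices in $S$.

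The construction of explicit non-coset contractive projection sets at the exceptional values in (b) and (c) then proceeds by writing down finite sets that satisfy this combinatorial condition but are not contained in any proper coset. Natural candidates are simplex-like configurations adapted to the order $n+1$ of convolution and placed in as many coordinate directions as $d$ permits; for the failure at $p=4$ in part~(b) with $d\geq 3$ and $k=2$, something in the spirit of $\{0,e_1,e_2,e_1+e_2\}$ embedded into $\mathbb{N}_0^3$ provides a natural candidate, and for~(c) one scales this idea to an $(n+1)$-fold analogue. The main obstacle is the \emph{only if} direction at the non-exceptional even integers: showing that the combinatorial condition \emph{forces} $S$ to be a coset intersection whenever $(d,k,n)$ lies outside the exceptional regime. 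This is where the dependence on $(d,k,n)$ becomes delicate, because the argument must exploit enough spatial room in $\mathbb{N}_0^d$ (measured by $d$ and $\dim S=k$) to place test frequencies that invalidate any non-coset candidate at the $(n+1)$-fold convolution level, thereby producing the precise trichotomy (a)--(c).
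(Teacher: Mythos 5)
There are several genuine gaps here, and one concrete error. First, for $p$ not an even integer your proposed reduction to Theorem~\ref{thm:Tdproj} --- extending $P_S$ to a contractive projection on $L^p(\mathbb{T}^d)$ by translating with monomials and ``passing to a limit'' --- is not justified and is not how the argument can go: the whole content of the theorem is that contractive projections on $H^p$ need \emph{not} come from contractive projections on $L^p$ (this fails at even $p$), so the ``rigidity'' you invoke is precisely what must be proved. The paper instead works dually: by Shapiro's orthogonality criterion, $S$ is a contractive projection set iff $\int |f|^{p-2}f\,\overline{z^\lambda}\,dm_d=0$ for all $f$ supported on $S$ and all $\lambda$ in $(\Lambda(S)\cap\mathbb{N}_0^d)\setminus S$; for non-even $p$ one takes $f=z^{\gamma_0}+\varepsilon\sum_j z^{\gamma_j}$ and shows via the binomial series that the lowest-order coefficient in $\varepsilon$ is a product of nonvanishing binomial coefficients, hence nonzero. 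Second, for $p=2(n+1)$ your combinatorial condition is misidentified: there is no cancellation phenomenon. Testing with $f=\sum_{\alpha\in S}z^\alpha$, all relevant Fourier coefficients of $|f|^{2n}f$ are sums of \emph{positive} terms, so the condition is purely geometric --- no point of $\Lambda(S)\cap\mathbb{N}_0^d$ outside $S$ may be expressible as $\gamma_0+\sum m_\alpha(\alpha-\gamma_0)$ with $\max(\sum_{m_\alpha>0}m_\alpha,\,-\sum_{m_\alpha<0}m_\alpha)\le n$ (the condition $E_n(S)=S$ of Theorem~\ref{thm:shapirogeometric}).

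Third, your candidate example for part (b) fails: for $S=\{0,e_1,e_2,e_1+e_2\}\subset\mathbb{N}_0^3$ the point $2e_1=e_1+(e_1-0)$ lies in $\Lambda(S)\cap\mathbb{N}_0^3$ at distance $1$ from $S$, so $E_1(S)\neq S$ and $S$ is not a contractive projection set for $H^4$. The working examples must be positioned so that all $n$-extensions exit the narrow cone $\mathbb{N}_0^d$; the paper uses $\{(3,0,0),(0,3,0),(1,1,1)\}$ for $p=4$, $k=2$, and more elaborate four- and five-point configurations such as $\{(n,1,0,1),(n+1,0,1,0),(0,0,n+1,0),(0,0,0,n+1)\}$ for part (c). Finally, the hardest direction --- that for non-exceptional $(d,k,n)$ the condition $E_n(S)=S$ \emph{forces} $S=\Lambda(S)\cap\mathbb{N}_0^d$ --- is exactly where you stop. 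This occupies the bulk of the paper (an iterated reflection/Euclidean-algorithm scheme in the plane for $k=2$, and a negativity-index induction on triples of admissible vectors for $d=k=3$), and no part of it is sketched in your proposal.
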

	
	One may think suggestively of the case $d\geq 4$ and $k\geq 3$ as 
	exhibiting higher-dimensional behavior. We will see that the hardest part 
	of the theorem is item (b) which can be thought of as representing the two 
	cases of intermediate dimension, namely $d=k=3$ and $d\geq 3$, $k=2$.
	
	As regards the variation in $p$, the simplest part of the proof of 
	Theorem~\ref{thm:TdprojHp} is the case $p=\infty$, because we can construct 
	explicit examples demonstrating that any contractive projection set must be 
	the restriction of a coset to $\mathbb{N}_0^d$. This is made possible by 
	the fact that the norm of $H^\infty(\mathbb{T}^d)$ is easy to understand. 
	In the case $1 \leq p < \infty$, we will by contrast reformulate the 
	problem using duality arguments (see e.g.~Shapiro's monograph 
	\cite[Sec.~4.2]{Shapiro}). In this approach, it is crucial to understand 
	the Fourier coefficients of
	\[|f|^{p-2} f\]
	in terms of the Fourier coefficients of $f$. It is clear that this problem 
	takes on a completely different character when $p$ is an even integer, in 
	which case we have an interesting geometric description of the contractive 
	projection sets that depend crucially on $p$.
	
	Suppose that $\Gamma$ is a non-empty subset of $\mathbb{N}_0^d$, and let 
	$\Lambda(\Gamma)$ denote the coset in $\mathbb{Z}^d$ generated by $\Gamma$. 
	We can represent every $\lambda$ in $\Lambda(\Gamma)$ as a finite linear 
	combination 
	\begin{equation}\label{eq:lgrep} 
		\lambda = \gamma + \sum_{\substack{\alpha \in \Gamma \\
				\alpha \neq \gamma}} m_{\gamma,\alpha} (\alpha-\gamma) , 
	\end{equation}
	where $\gamma$ is any element in $\Gamma$ and $m_{\gamma,\alpha}$ are 
	integers.
	\begin{definition}
		Let $\Gamma$ be a non-empty subset of $\mathbb{N}_0^d$ and suppose that 
		$\lambda$ is in $\Lambda(\Gamma)$. The \emph{distance} from $\Gamma$ to 
		$\lambda$ is
		\[d(\Gamma,\lambda) := \inf\max\left(\sum_{m_{\gamma,\alpha}>0} 
		m_{\gamma,\alpha}, -\sum_{m_{\gamma,\alpha}<0} 
		m_{\gamma,\alpha}\right)\]
		where the infimum is taken over all possible representations 
		\eqref{eq:lgrep} of $\lambda$. For a non-negative integer $n$, the 
		$n$\emph{-extension} of $\Gamma$ is
		\[E_n(\Gamma) := \{\lambda \in \Lambda(\Gamma)\cap 
		\mathbb{N}_0^d\,:\,d(\Gamma,\lambda) \leq n \}.\]
	\end{definition}
	
	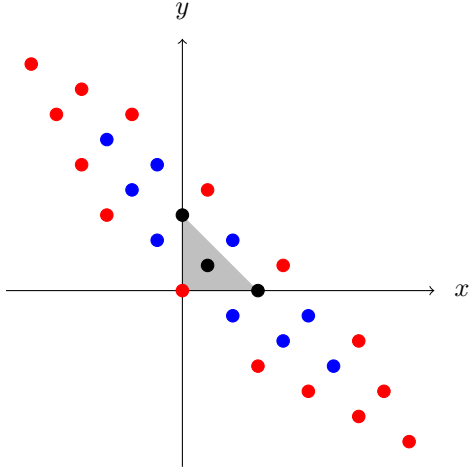
\begin{figure}
		\centering
		\begin{tikzpicture}
			\begin{axis}[
				axis equal image,
				xlabel= $x$, 
				ylabel = $y$,
				axis lines = center,
				xmin = -7,
				xmax = 10,
				ymin = -7,
				ymax = 10,
				every axis x label/.style={
					at={(ticklabel* cs:1.025)},
					anchor=west,
				},
				every axis y label/.style={
					at={(ticklabel* cs:1.025)},
					anchor=south,
				},
				ticks=none,%
				axis line style={->}]
				
				\addplot[thin, name path=t1, draw opacity=0, domain=0:3, 
				samples=5] {3-x};
				\addplot[thin, name path=t2, draw opacity=0, domain=0:3, 
				samples=5] {0};
				\addplot[color=black, fill=black, fill opacity=0.25]
				fill between[of=t1 and t2, soft clip={domain=0:3},];
				
				\node[circle, draw, fill=black, scale=0.5] at (3,0){};
				\node[circle, draw, fill=black, scale=0.5] at (0,3){};
				\node[circle, draw, fill=black, scale=0.5] at (1,1){};
				
				\node[circle, draw, color=red, fill=red, scale=0.5] at (0,0){};
				
				\node[circle, draw, color=blue, fill=blue, scale=0.5, 
				opacity=1] 
				at (-1,2){};
				\node[circle, draw, color=blue, fill=blue, scale=0.5, 
				opacity=1] 
				at (2,-1){};
				\node[circle, draw, color=blue, fill=blue, scale=0.5, 
				opacity=1] 
				at (2,2){};
				\node[circle, draw, color=blue, fill=blue, scale=0.5, 
				opacity=1] 
				at (4,-2){};
				\node[circle, draw, color=blue, fill=blue, scale=0.5, 
				opacity=1] 
				at (-2,4){};
				\node[circle, draw, color=blue, fill=blue, scale=0.5, 
				opacity=1] 
				at (5,-1){};
				\node[circle, draw, color=blue, fill=blue, scale=0.5, 
				opacity=1] 
				at (-1,5){};
				\node[circle, draw, color=blue, fill=blue, scale=0.5, 
				opacity=1] 
				at (6,-3){};
				\node[circle, draw, color=blue, fill=blue, scale=0.5, 
				opacity=1] 
				at (-3,6){};
				
				\node[circle, draw, color=red, fill=red, scale=0.5, opacity=1] 
				at (-3,3){};
				\node[circle, draw, color=red, fill=red, scale=0.5, opacity=1] 
				at (3,-3){};
				
				\node[circle, draw, color=red, fill=red, scale=0.5, opacity=1] 
				at (-4,5){};
				\node[circle, draw, color=red, fill=red, scale=0.5, opacity=1] 
				at (5,-4){};
				
				\node[circle, draw, color=red, fill=red, scale=0.5, opacity=1] 
				at (7,-5){};
				\node[circle, draw, color=red, fill=red, scale=0.5, opacity=1] 
				at (-5,7){};
				
				\node[circle, draw, color=red, fill=red, scale=0.5, opacity=1] 
				at (7,-2){};
				\node[circle, draw, color=red, fill=red, scale=0.5, opacity=1] 
				at (-2,7){};
				
				\node[circle, draw, color=red, fill=red, scale=0.5, opacity=1] 
				at (8,-4){};
				\node[circle, draw, color=red, fill=red, scale=0.5, opacity=1] 
				at (-4,8){};
				
				\node[circle, draw, color=red, fill=red, scale=0.5, opacity=1] 
				at (9,-6){};
				\node[circle, draw, color=red, fill=red, scale=0.5, opacity=1] 
				at (-6,9){};
				
				\node[circle, draw, color=red, fill=red, scale=0.5, opacity=1] 
				at (4,1){};
				\node[circle, draw, color=red, fill=red, scale=0.5, opacity=1] 
				at (1,4){};
			\end{axis} 
		\end{tikzpicture}
		\caption{Points $\lambda$ which satisfy 
			{\color{blue}$d(\Gamma,\lambda)=1$} and 
			{\color{red}$d(\Gamma,\lambda)=2$} for 
			$\Gamma=\{(3,0,0),(0,3,0),(1,1,1)\}$, represented in the projected 
			plane defined by $z=3-x-y$. The shaded triangle represents the 
			intersection of this plane and the narrow cone. Note that $(0,0,3)$ 
			is in $E_2(\Gamma)$, so $E_2(\Gamma)=\Lambda(\Gamma)\cap 
			\mathbb{N}_0^3$. However $(0,0,3)$ is not in $E_1(\Gamma)$, so 
			$E_1(\Gamma)=\Gamma$.}
		\label{fig:nocorner}
	\end{figure}
	
	Clearly, $\Lambda(E_n(\Gamma))=\Lambda(\Gamma)$ for every $n\geq1$. 
	Moreover, we find that $\Gamma = \Lambda \cap \mathbb{N}_0^d$ for a coset 
	$\Lambda$ in $\mathbb{Z}^d$ if and only if
	\[\Gamma = \bigcup_{n=1}^\infty E_n(\Gamma).\]
	See Figure~\ref{fig:nocorner} for an example illustrating the possibility 
	that $E_2(\Gamma) \neq E_1(\Gamma)=\Gamma$.
	
	The proof of Theorem~\ref{thm:TdprojHp} in the case that $p$ is an even 
	integer, which is the most difficult case, is based on the following result.
	\begin{theorem}\label{thm:shapirogeometric} 
		Let $d$ be a positive integer and $n$ be a non-negative integer. A set 
		$\Gamma$ in $\mathbb{N}_0^d$ is a contractive projection set for 
		$H^{2(n+1)}(\mathbb{T}^d)$ if and only if $E_n(\Gamma)=\Gamma$. 
	\end{theorem}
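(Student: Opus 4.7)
My plan is to reduce the contractivity of $P_\Gamma$ on $H^{2(n+1)}(\mathbb{T}^d)$ to a one-parameter convexity statement. For a polynomial $f = g + h$ where $g = P_\Gamma f$ has Fourier support in $\Gamma$ and $h$ has Fourier support in $\mathbb{N}_0^d \setminus \Gamma$, I would study
\[ \phi(t) := \|g + th\|_{2(n+1)}^{2(n+1)} = \int_{\mathbb{T}^d}\bigl(|g|^2 + 2t\,\mre(\bar{g} h) + t^2|h|^2\bigr)^{n+1}\, dm_d, \qquad t \in \mathbb{R}. \]
Pointwise in $z$ the integrand is convex in $t$, being the composition of a non-negative quadratic with the convex non-decreasing map $x \mapsto x^{n+1}$ on $[0,\infty)$; hence $\phi$ is a convex polynomial in $t$ of degree $2(n+1)$. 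The contractive inequality $\|P_\Gamma f\|_p \leq \|f\|_p$ is exactly $\phi(0) \leq \phi(1)$, and for a convex $\phi$ this follows as soon as $\phi'(0) = 0$, since then $t = 0$ is a global minimum. A direct differentiation under the integral yields
\[ \phi'(0) = 2(n+1)\,\mre \sum_{\beta} \widehat{g^n \bar{g}^{n+1}}(-\beta)\,\widehat{h}(\beta), \]
so the whole theorem reduces to understanding when $\widehat{g^n \bar{g}^{n+1}}(-\beta) = 0$ for every $\beta \in \mathbb{N}_0^d \setminus \Gamma$ and every polynomial $g$ supported on $\Gamma$.

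For the sufficiency direction I would assume $E_n(\Gamma) = \Gamma$. The coefficient $\widehat{g^n\bar{g}^{n+1}}(-\beta)$ is a sum over ordered representations $\beta = \sum_{j=1}^{n+1}\gamma_j - \sum_{i=1}^n \alpha_i$ with $\gamma_j, \alpha_i \in \Gamma$. Choosing $\gamma_{n+1}$ as base point rewrites any such $\beta$ as
\[ \beta = \gamma_{n+1} + \sum_{j=1}^n (\gamma_j - \gamma_{n+1}) - \sum_{i=1}^n (\alpha_i - \gamma_{n+1}), \]
a representation with positive and negative sums both bounded by $n$; hence $\beta \in \Lambda(\Gamma)$ with $d(\Gamma, \beta) \leq n$. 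If moreover $\beta \in \mathbb{N}_0^d$, then $\beta \in E_n(\Gamma) = \Gamma$, so the coefficient vanishes for every $\beta \in \mathbb{N}_0^d \setminus \Gamma$, as required. The convexity argument then gives the contractive inequality on polynomials, and a routine density argument extends it to all of $H^{2(n+1)}(\mathbb{T}^d)$.

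For the necessity direction, suppose $\alpha_0 \in E_n(\Gamma) \setminus \Gamma$. Starting from a representation witnessing $d(\Gamma, \alpha_0) \leq n$ and padding with copies of the base point on both sides to balance the counts to $n+1$ versus $n$, I produce at least one representation $\alpha_0 = \sum_{j=1}^{n+1}\gamma_j - \sum_{i=1}^n \alpha_i$ with all indices in $\Gamma$. Treating $g = \sum_{\gamma \in \Gamma} t_\gamma z^\gamma$ as having indeterminate coefficients, $\widehat{g^n \bar{g}^{n+1}}(-\alpha_0)$ becomes a polynomial in the variables $(t_\gamma, \bar{t}_\gamma)_{\gamma \in \Gamma}$ whose monomial coefficients are non-negative integers counting ordered representations and hence cannot cancel; the polynomial is therefore not identically zero, so I may choose $g$ with $\widehat{g^n\bar{g}^{n+1}}(-\alpha_0) \neq 0$. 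Taking $h = \varepsilon z^{\alpha_0}$ with $\varepsilon$ of appropriate phase forces $\phi'(0) < 0$, whence $\|g + th\|_p < \|g\|_p = \|P_\Gamma(g + th)\|_p$ for some small $t > 0$, contradicting the contractivity of $P_\Gamma$.

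The conceptual heart of the argument is the base-point identity exhibiting the Fourier support of $g^n \bar{g}^{n+1}$, once intersected with $\mathbb{N}_0^d$, as lying in $E_n(\Gamma)$; this is what ties the geometric definition of $E_n(\Gamma)$ to the analytic problem, and together with convexity it is what allows the first-order condition $\phi'(0) = 0$ alone to produce the global inequality. I expect the most delicate bookkeeping to be the combinatorial padding in the necessity direction, which must convert any witness of $d(\Gamma, \alpha_0) \leq n$ into the precisely balanced $(n+1)$-versus-$n$ form demanded by the Fourier expansion of $g^n \bar{g}^{n+1}$.
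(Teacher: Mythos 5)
Your proof is correct, and its combinatorial core --- identifying the Fourier support of $g^{n}\overline{g}^{\,n+1}$, after intersecting with $\mathbb{N}_0^d$, with $E_n(\Gamma)$ via the base-point rewriting in one direction and the padding trick in the other --- is exactly the computation the paper performs with $f(z)=\sum_{\alpha\in\Gamma_0}z^\alpha$. Where you genuinely differ is in the functional-analytic wrapper. The paper routes both implications through Lemma~\ref{lem:shapiro}, which rests on Shapiro's characterization of orthogonality in $L^p$ (valid for all $1\le p<\infty$, with extra care at $p=1$ about the zero set of $f$) and on Theorem~\ref{thm:Tdproj} to reduce to perturbations supported on $\Lambda(\Gamma)\cap\mathbb{N}_0^d$. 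You instead exploit the evenness of $p=2(n+1)$ directly: $\phi(t)=\|g+th\|_{2(n+1)}^{2(n+1)}$ is a convex polynomial in $t$, so the first-order condition $\phi'(0)=0$ already yields $\phi(0)\le\phi(1)$, and neither Shapiro's theorem nor Theorem~\ref{thm:Tdproj} is needed --- the latter because the Fourier support of $g^{n}\overline{g}^{\,n+1}$ automatically lies in $-\Lambda(\Gamma)$, so frequencies outside the coset contribute nothing to $\phi'(0)$. Your argument is therefore more elementary and self-contained for this theorem; the paper's heavier machinery is not wasted, though, since Lemma~\ref{lem:shapiro} is needed anyway for the non-even-integer case of Theorem~\ref{thm:TdprojHp}, where $|f|^{p-2}f$ is not a polynomial and the convexity shortcut is unavailable. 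One minor simplification for a write-up: in the necessity direction, instead of arguing that a polynomial in $(t_\gamma,\overline{t_\gamma})$ with non-negative integer coefficients is not identically zero, you may simply take all $t_\gamma=1$, as the paper does, so that every ordered representation of $\alpha_0$ contributes $+1$ and the relevant Fourier coefficient is a positive integer.
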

	
	Theorem~\ref{thm:shapirogeometric} gives rise to an effective algorithm for 
	checking whether a finite subset $\Gamma$ of $\mathbb{N}_0^d$ is a 
	contractive projection set for $H^{2(n+1)}(\mathbb{T}^d)$.
	
	The $d$ and $k$ dependence of Theorem~\ref{thm:TdprojHp} appears when we 
	operationalize the condition of Theorem~\ref{thm:shapirogeometric}. 
	Inspired by a suggestive terminology introduced by Helson \cite{Helson06}, 
	we will sometimes refer to $\mathbb{N}_0^d$ as the \emph{narrow cone} in 
	$\mathbb{Z}^d$ to visualize how the geometry changes when $d$ increases: 
	$\mathbb{N}_0^d$ becomes narrower in $\mathbb{Z}^d$, and this permits more 
	sets $\Gamma$ to enjoy the crucial property that $E_n(\Gamma)=\Gamma$. 
	
	Two of our examples reflecting the kind of narrowness just alluded to, has 
	an interesting application in the limiting case $d=\infty$. To state this 
	final result of the present paper, we first define $\mathbb{T}^\infty$ as 
	the countably infinite product of the torus $\mathbb{T}$ and equip it with 
	its Haar measure $m_\infty$. The dual group of $\mathbb{T}^\infty$ is
	\[\mathbb{Z}^{(\infty)} = \bigcup_{d=1}^\infty \mathbb{Z}^d\]
	in view of the natural inclusion $\mathbb{Z}^d \subseteq \mathbb{Z}^{d+1}$. 
	Fix $1 \leq p \leq \infty$. Every $f$ in $L^p(\mathbb{T}^\infty)$ can be 
	represented as a Fourier series $f(z) \sim \sum_{\alpha \in 
		\mathbb{Z}^{(\infty)}} \widehat{f}(\alpha)\,z^\alpha$, where
	\[\widehat{f}(\alpha) = \int_{\mathbb{T}^\infty} 
	f(z)\,\overline{z^\alpha}\,dm_\infty(z).\]
	The Hardy space $H^p(\mathbb{T}^\infty)$ is the subspace of 
	$L^p(\mathbb{T}^\infty)$ comprised of functions $f$ such that 
	$\widehat{f}(\alpha)=0$ for every $\alpha$ in $\mathbb{Z}^{(\infty)} 
	\setminus \mathbb{N}_0^{(\infty)}$. It is not hard to see that 
	Theorem~\ref{thm:Tdproj}, Theorem~\ref{thm:TdprojHp}, and 
	Theorem~\ref{thm:shapirogeometric} extend to the infinite-dimensional 
	torus. 
	
	Bayart and Masty{\l}o \cite{BM19} have recently demonstrated that there are 
	no variants of the classical real and complex interpolation theorems for 
	$H^p(\mathbb{T}^\infty)$ in contrast to the finite dimensional case. The 
	following result strikingly exemplifies the impossibility of interpolating 
	between Hardy spaces on the infinite-dimensional torus.
	\begin{theorem}\label{thm:curlyop} 
		Fix an integer $n\geq1$. There is a linear operator $T_n$ which is 
		densely defined on $H^p(\mathbb{T}^\infty)$ for every $1 \leq p \leq 
		\infty$, and which does not extend to a bounded operator on 
		$H^p(\mathbb{T}^\infty)$ unless $p=2,4,\ldots,2(n+1)$. 
	\end{theorem}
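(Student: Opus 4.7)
I plan to realize $T_n$ as an infinite composition of identical finite-dimensional projections acting on disjoint blocks of coordinates of $\mathbb{T}^\infty$. The first step is to select a single building block: a finite set $\Gamma_0\subseteq\mathbb{N}_0^d$ with $d\geq 4$, $\dim(\Gamma_0)\geq 3$, $0\in\Gamma_0$, and $E_n(\Gamma_0)=\Gamma_0$ while $E_{n+1}(\Gamma_0)\neq\Gamma_0$. Such a $\Gamma_0$ is precisely the combinatorial object one needs to establish the sharpness clauses of Theorem~\ref{thm:TdprojHp}(c) and should be furnished by the same examples used there. Partition $\mathbb{N}=\bigsqcup_{j\geq 1}B_j$ into blocks of size $d$, let $\Gamma_j\subseteq\mathbb{N}_0^{B_j}$ be the copy of $\Gamma_0$ attached to block $j$, and set
\[
\Gamma := \bigl\{\alpha\in\mathbb{N}_0^{(\infty)} : \alpha|_{B_j}\in\Gamma_j \text{ for every } j\geq 1\bigr\}.
\]
Define $T_n := P_\Gamma$ with the algebra of polynomials on $\mathbb{T}^\infty$ as its dense domain. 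Because any polynomial involves only finitely many variables, $T_n$ coincides on its domain with a locally finite composition of commuting block projections, one for each $B_j$.

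For $p=2m$ with $1\leq m\leq n+1$, the inclusion $E_{m-1}(\Gamma_0)\subseteq E_n(\Gamma_0)=\Gamma_0$ together with Theorem~\ref{thm:shapirogeometric} yields that $P_{\Gamma_0}$ is a contraction on $H^{2m}(\mathbb{T}^d)$. Multiplicativity of $L^p$-norms across independent variables then lifts each block projection to a contraction on $H^{2m}(\mathbb{T}^\infty)$, and therefore so is every finite composition of them. By density, $T_n$ extends to a contraction on $H^{2m}(\mathbb{T}^\infty)$.

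For any other $p\in[1,\infty]$, set $c:=\|P_{\Gamma_0}\|_{H^p(\mathbb{T}^d)}$; the claim is that $c>1$. Indeed, if $p=2k$ with $k\geq n+2$ this follows from $E_{k-1}(\Gamma_0)\supsetneq\Gamma_0$ and Theorem~\ref{thm:shapirogeometric}; if $p$ is not an even integer this follows from Theorem~\ref{thm:TdprojHp}(c), since $\Gamma_0$ is not a coset restriction (as $\bigcup_m E_m(\Gamma_0)\supsetneq\Gamma_0$). Applying multiplicativity of $L^p$-norms to an $N$-fold tensor product of near-extremizers of $P_{\Gamma_0}$ shows that the finite composition $T_n^{(N)}$ involving the first $N$ blocks has norm $c^N$ on $H^p(\mathbb{T}^{Nd})$. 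Since $H^p(\mathbb{T}^{Nd})$ embeds isometrically in $H^p(\mathbb{T}^\infty)$ and $T_n$ restricts to $T_n^{(N)}$ there, $\|T_n\|_{H^p(\mathbb{T}^\infty)}\geq c^N\to\infty$, so $T_n$ does not extend to a bounded operator. The main obstacle is the very first step: producing $\Gamma_0$ with all the listed properties simultaneously, especially containing $0$ so that the block-product definition of $\Gamma$ is consistent with polynomials supported in only a few coordinates; modulo this combinatorial input, the rest of the argument reduces to standard multiplicativity of $L^p$-norms across independent variables.
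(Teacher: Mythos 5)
Your construction hinges on a building block $\Gamma_0\subseteq\mathbb{N}_0^d$ with $0\in\Gamma_0$, $E_n(\Gamma_0)=\Gamma_0$ and $E_{n+1}(\Gamma_0)\neq\Gamma_0$, and you flag its existence as the "main obstacle." Unfortunately this obstacle is insurmountable: no such set exists. If $0\in\Gamma_0$ and $\alpha,\beta\in\Gamma_0$, then $\alpha+\beta=\alpha+\beta-0$ is a $1$-extension in the sense of \eqref{eq:reform}, and it lies in $\mathbb{N}_0^d$ automatically; so $E_n(\Gamma_0)=\Gamma_0$ (which forces $E_1(\Gamma_0)=\Gamma_0$, since $E_1\subseteq E_n$) makes $\Gamma_0$ closed under addition. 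But then any $\lambda\in\Lambda(\Gamma_0)\cap\mathbb{N}_0^d$ can be written as $\lambda=P-N$ with $P,N\in\Gamma_0$ (collect the positive and negative parts of an integer combination of elements of $\Gamma_0$), and $\lambda=P+0-N$ is again a $1$-extension. Hence $\Gamma_0=\Lambda(\Gamma_0)\cap\mathbb{N}_0^d$, so $E_k(\Gamma_0)=\Gamma_0$ for every $k$ and, by Theorem~\ref{thm:shapirogeometric}, $P_{\Gamma_0}$ is contractive on $H^{2k}(\mathbb{T}^d)$ for all $k$. The "only if" half of your theorem then fails for every even $p$. The examples of Section~\ref{subsec:cex} do not help here: adjoining $0$ to, say, $\Gamma_n$ of Example~\ref{ex:d4} immediately creates new $1$-extensions such as $(n,1,0,1)+(n+1,0,1,0)-0$, destroying the property $E_n=\Gamma$.

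This is exactly the difficulty the paper's construction is designed to dodge. Instead of an infinite tensor product of block projections (which needs $0$ in the block spectrum to act sensibly on polynomials in finitely many variables), the paper forms, for each $m$, a composition $T_{m,n}$ of $m$ copies of $P_{\Gamma_n}$ acting on a \emph{fresh, disjoint} batch of $m$ four-dimensional blocks, and sets $T_n=\sum_m m^{-2}T_{m,n}$. Because $0\notin\Gamma_n$, a polynomial not involving the variables of a given block is annihilated by that block's projection, so $T_{m,n}f=0$ for all large $m$ and $T_n$ is densely defined; the weights $m^{-2}$ give boundedness (norm at most $\pi^2/6$) when $\|P_{\Gamma_n}\|_{H^p(\mathbb{T}^4)}=1$, while $\|T_n\|\geq (1+\delta_p)^m/m^2\to\infty$ otherwise. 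Your norm computations (multiplicativity across independent blocks, the lower bound via tensor powers of near-extremizers) are the same ones the paper uses, but they must be attached to this summed, disjointly-supported construction rather than to a single infinite product projection.
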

	
	Our main interest in Theorem~\ref{thm:curlyop} stems from the \emph{Bohr 
		correspondence}, which allows us to translate results from Hardy spaces 
	on the infinite-dimensional torus to Hardy spaces of Dirichlet series. 
	Readers familiar with that field of study will immediately notice the 
	partial analogy between Theorem~\ref{thm:curlyop} and the local embedding 
	problem (see \cite[Sec.~3]{SS09} or Section~\ref{sec:bohr} below). However, 
	it should be stressed that the construction of Theorem~\ref{thm:curlyop} is 
	purely multiplicative, while the local embedding problem concerns the 
	interplay between the additive and multiplicative structure of the integers.
	
	We close this introduction by giving a brief overview of the contents of 
	the three additional sections of this paper. Section~\ref{sec:Tdproj} 
	contains an exposition of the proof of Theorem~\ref{thm:Tdproj} and the 
	proof of Theorem~\ref{thm:TdprojHp} in the case $p=\infty$. The body of the 
	paper is Section~\ref{sec:TdprojHp} which deals with contractive projection 
	sets for $H^p(\mathbb{T}^d)$ and contains the proof of 
	Theorem~\ref{thm:shapirogeometric} and Theorem~\ref{thm:TdprojHp} for 
	$p<\infty$. In the final Section~\ref{sec:bohr}, we establish 
	Theorem~\ref{thm:curlyop} and discuss our results in the context of Hardy 
	spaces of Dirichlet series.

	\section{Contractive projection sets for 
		\texorpdfstring{$L^p(\mathbb{T}^d)$}{Lp(Td)}} \label{sec:Tdproj}
	
	\subsection{Proof of Theorem~\ref{thm:Tdproj}} \label{subsec:Tdproof} The 
	purpose of this section is to present a self-contained proof of 
	Theorem~\ref{thm:Tdproj}. As mentioned above, this can be achieved by 
	combining arguments and results due to Rudin \cite{Rudin} and And\^{o} 
	\cite{Ando66}. For expositional reasons, we have nevertheless chosen to 
	furnish a complete proof.
	
	Suppose that $\Lambda$ is a non-empty subset of $\mathbb{Z}^d$. We begin by 
	noting that we may assume without loss of generality that $0$ is in 
	$\Lambda$. Indeed, suppose that this is not the case. Fix some $\lambda$ in 
	$\Lambda$ and consider the translated set
	\[\Lambda-\lambda := \left\{\alpha \in \mathbb{Z}^d \,:\, \alpha+\lambda 
	\in \Lambda\right\}\]
	which clearly contains $0$. Define $M_\lambda$ on $L^p(\mathbb{T}^d)$ by 
	$M_\lambda f(z) := z^\lambda f(z)$. Evidently, $M_\lambda$ is an isometric 
	isomorphism on $L^p(\mathbb{T}^d)$. Note also that 
	\begin{equation}\label{eq:Mfact} 
		P_{\Lambda} M_\lambda = M_\lambda P_{\Lambda-\lambda}, 
	\end{equation}
	which implies at once that $\Lambda$ is a contractive projection set for 
	$L^p(\mathbb{T}^d)$ if and only if $\Lambda-\lambda$ is a contractive 
	projection set for $L^p(\mathbb{T}^d)$.
	
	The following part of the proof is from Section~3.1.2 and Section~3.2.4 in 
	Rudin's monograph \cite{Rudin}, where the analogous statement is 
	established for $L^1(G)$ with $G$ a compact abelian group. This part of 
	Rudin's argument extends to $1< p \leq \infty$, $p\neq2$, without 
	modification.
	\begin{proof}
		[Proof of Theorem~\ref{thm:Tdproj}: Sufficiency] As noted above, we may 
		restrict our attention to the case that $\Lambda$ is a subgroup of 
		$\mathbb{Z}^d$ by translating a coset if necessary.
		
		We will require some preliminary results regarding $\mathbb{T}^d$ and 
		$\mathbb{Z}^d$. Recall that $\mathbb{T}^d$ is a compact abelian group 
		whose dual group is $\mathbb{Z}^d$. Suppose that $\Lambda$ is a 
		subgroup of $\mathbb{Z}^d$. The \emph{annihilator}
		\[\Lambda^\perp := \left\{z \in \mathbb{T}^d \,:\, z^\alpha=1 \text{ 
			for every } \alpha \text{ in } \Lambda\right\}\]
		is the dual group of the coset group $\mathbb{Z}^d/\Lambda$ (see 
		e.g.~\cite[Thm.~2.1.2]{Rudin}). Since $\Lambda^\perp$ is a closed 
		subgroup of $\mathbb{T}^d$, it is a compact abelian group whose Haar 
		measure we shall denote by $\mu$. By the duality relations between 
		$\mathbb{Z}^d/\Lambda$ and $\Lambda^\perp$, we may represent the 
		characteristic function of $\Lambda$ in $\mathbb{Z}^d$ as
		\[\mathbf{1}_\Lambda(\alpha) = \int_{\Lambda^\perp} 
		\zeta^\alpha\,d\mu(\zeta).\]
		For any $f$ in $L^p(\mathbb{T}^d)$, set $f_\zeta(z) := f(\zeta_1 z_1, 
		\zeta_2 z_2, \ldots, \zeta_d z_d)$. We now find that 
		\begin{equation}\label{eq:annirep} 
			P_\Lambda f(z) = \sum_{\alpha \in \Lambda} \widehat{f}(\alpha)\, 
			z^\alpha = \sum_{\alpha \in \mathbb{Z}^d} 
			\mathbf{1}_\Lambda(\alpha) \widehat{f}(\alpha) \, z^\alpha = 
			\int_{\Lambda^\perp} f_\zeta(z)\,d\mu(\zeta). 
		\end{equation}
		Taking the $L^p(\mathbb{T}^d)$ norm on both sides and combining 
		Minkowski's inequality and the fact that $\|f_\zeta\|_p=\|f\|_p$ for 
		every $\zeta$ in $\mathbb{T}^d$, we see that
		\[ \|P_\Lambda f \|_p \leq \|f\|_p 
		\left(\mu(\Lambda^\perp)\right)^\frac{1}{p}= \|f\|_p, \]
		since $\mu$ is the Haar measure of $\Lambda^\perp$. 
	\end{proof}
	
	Observe that \eqref{eq:annirep} says that any projection with respect to a 
	subgroup $\Lambda$ is in fact an averaging operator in $L^1(\mathbb{T}^d)$. 
	Douglas \cite{Douglas65} proved that any projection in $L^1$ that fixes the 
	constants, is in fact a conditional expectation with respect to a 
	sigma-algebra. 
	
	Before we proceed with the proof that every contractive projection set in 
	$L^p(\mathbb{T}^d)$ for $1 \leq p \leq \infty$, $p\neq 2$, is necessarily a 
	coset in $\mathbb{Z}^d$, let us explain how F.~Wiener's projection and the 
	$m$-homogeneous projection mentioned in the introduction fit into the 
	framework of the proof presented above. Note that by 
	Theorem~\ref{thm:Tdproj}, we see that Example~\ref{ex:fwiener} and 
	Example~\ref{ex:mhom} contain all the contractive projection sets for 
	$L^p(\mathbb{T})$, since the only cosets in $\mathbb{Z}$ are the arithmetic 
	progressions and the singletons.
	\begin{example}\label{ex:fwiener} 
		The contractive projection sets that correspond to F.~Wiener's 
		projection, are the arithmetic progressions $\Lambda := r +k\mathbb{Z}$ 
		for integers $k>1$ and $0 \leq r < k$. The associated subgroup of 
		$\mathbb{Z}$ is $k\mathbb{Z}$ and clearly
		\[(k\mathbb{Z})^\perp = \big\{\omega_k^j \,:\, 
		j=0,1,\ldots,{k-1}\big\},\]
		where $\omega_k = \exp(2\pi i /k)$. The Haar measure of 
		$(k\mathbb{Z})^\perp$ is the normalized counting measure. Combining 
		\eqref{eq:Mfact} and \eqref{eq:annirep}, we get the well-known formula
		\[P_\Lambda f(z) = \frac{1}{k}\sum_{j=0}^{k-1} f(\omega_k^jz) 
		\,\omega_k^{-jr}\]
		for $f$ in $L^p(\mathbb{T})$. 
	\end{example}
	\begin{example}\label{ex:mhom} 
		Fix $d\geq1$. For an integer $m$, the $m$-homogeneous projection on 
		$L^p(\mathbb{T}^d)$ corresponds to the contractive projection set
		\[\Lambda_m := \big\{\alpha \in \mathbb{Z}^d \,:\, 
		\alpha_1+\alpha_2+\cdots + \alpha_d = m\big\}.\]
		The associated subgroup of $\mathbb{Z}^d$ is $\Lambda_0$, so 
		$\Lambda_0^\perp=\big\{z=(w,\ldots, w) \,:\, w\in \mathbb{T}\big\}$, 
		and the Haar measure is $m_1$ on $\mathbb T$. Combining 
		\eqref{eq:Mfact} and \eqref{eq:annirep}, we get
		\[P_{\Lambda_m} f(z) = \int_{\mathbb{T}} f(z_1w,z_2w,\ldots,z_d w) 
		\,w^{-m}\,dm_1(w)\]
		for $f$ in $L^p(\mathbb{T}^d)$. 
	\end{example}
	
	By results in Section~1.4.1 and Section~3.2.3 in Rudin's monograph 
	\cite{Rudin}, it follows that if $\Gamma$ is a contractive projection set 
	for $L^1(\mathbb{T}^d)$, then $\Gamma$ is necessarily a coset in 
	$\mathbb{Z}^d$. This part of Rudin's argument does not work for $p>1$. 
	However, we can appeal to a general result of 
	And\^{o}~\cite[Thm.~1]{Ando66} which states that any contractive projection 
	on $L^p$ for $1<p<\infty$, $p\neq2$, which fixes the constants, extends to 
	a contractive projection on $L^1$. Hence any contractive projection set for 
	$L^p(\mathbb{T}^d)$, for $1<p<\infty$, $p\neq2$, must be a coset in 
	$\mathbb{Z}^d$. The case $p=\infty$ is handled by Riesz--Thorin 
	interpolation, since the linear operator $P_\Lambda$ is contractive on 
	$L^p$ for $2<p<\infty$ when it is contractive on $L^2$ and $L^\infty$. 
	These considerations also apply if $\mathbb{T}^d$ is replaced by a compact 
	abelian group $G$.
	
	To highlight the new difficulties that arise when we later treat the 
	corresponding problem for $H^p(\mathbb{T}^d)$, we will present a direct 
	proof of the necessity part of Theorem~\ref{thm:Tdproj} below. We shall 
	require two preliminary estimates. We note in passing that it is possible 
	to obtain similar estimates if $\mathbb{T}^d$ is replaced by a compact 
	abelian group $G$, thereby sidestepping the need for And\^{o}'s theorem and 
	Riesz--Thorin interpolation.
	\begin{lemma}\label{lem:Tdline} 
		Fix $1 \leq p \leq \infty$, $p\neq 2$, and set $c_p := 2/p-1$. Then 
		\begin{equation}\label{eq:Tdline} 
			\|c_p \varepsilon \overline{z}+1+\varepsilon z \|_p < 
			\|1+\varepsilon z \|_p 
		\end{equation}
		for every sufficiently small $\varepsilon>0$. 
	\end{lemma}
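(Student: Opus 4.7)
The plan is to establish the inequality via an asymptotic expansion of both sides in powers of $\varepsilon$, treating $p<\infty$ by a Taylor expansion to second order and $p=\infty$ by a direct elementary computation. Since both sides equal $1$ at $\varepsilon=0$ and the first-order contributions vanish upon integration over $\mathbb{T}$, the whole content of the lemma is encoded in the coefficient of $\varepsilon^2$ in $\|f\|_p^p$.

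For $1\le p<\infty$, note that both $|1+\varepsilon z|$ and $|c_p\varepsilon\overline{z}+1+\varepsilon z|$ are bounded below by a positive constant uniformly in $z\in\mathbb{T}$ once $\varepsilon$ is small, so $|f|^p=(|f|^2)^{p/2}$ admits a uniformly valid binomial expansion. Writing $z=e^{i\theta}$, one computes
\[|1+\varepsilon z|^2 = 1+2\varepsilon\cos\theta+\varepsilon^2,\]
\[|c_p\varepsilon\overline{z}+1+\varepsilon z|^2 = 1+2(1+c_p)\varepsilon\cos\theta+(1+c_p^2)\varepsilon^2+2c_p\varepsilon^2\cos 2\theta.\]
Expanding $(1+u)^{p/2}=1+\tfrac{p}{2}u+\tfrac{p(p-2)}{8}u^2+O(u^3)$, integrating against $d\theta/(2\pi)$, and using the identity $1+c_p=2/p$ together with the orthogonality relations $\int\cos\theta=\int\cos 2\theta=0$ and $\int\cos^2\theta=1/2$, one obtains
\[\|1+\varepsilon z\|_p^p = 1+\frac{p^2}{4}\varepsilon^2+O(\varepsilon^3) \quad\text{and}\quad \|c_p\varepsilon\overline{z}+1+\varepsilon z\|_p^p = 1+(p-1)\varepsilon^2+O(\varepsilon^3).\]
The difference of the two $\varepsilon^2$ coefficients is $(p-1)-p^2/4 = -(p-2)^2/4$, which is strictly negative whenever $p\ne 2$, yielding \eqref{eq:Tdline} for all sufficiently small $\varepsilon>0$.

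For $p=\infty$ one has $c_\infty=-1$, so $c_\infty\varepsilon\overline{z}+1+\varepsilon z = 1+2i\varepsilon\sin\theta$ has sup norm $\sqrt{1+4\varepsilon^2}$, while $\|1+\varepsilon z\|_\infty=1+\varepsilon$. Since $(1+\varepsilon)^2-(1+4\varepsilon^2)=\varepsilon(2-3\varepsilon)>0$ for $\varepsilon\in(0,2/3)$, \eqref{eq:Tdline} also holds in this case.

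I do not anticipate a serious obstacle: the computation is essentially bookkeeping, with the one delicate point being to confirm that the specific value $c_p=2/p-1$ produces the cancellation bringing the $\varepsilon^2$ coefficient down to $p-1$, so that the comparison reduces to the strict positivity of $(p-2)^2$. This cleanly reflects, already at the level of the simplest non-trivial trigonometric polynomial, the exceptional status of $p=2$ in Theorem~\ref{thm:Tdproj}.
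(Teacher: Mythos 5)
Your proof is correct and follows essentially the same route as the paper: a second-order binomial expansion of $|f|^p=(|f|^2)^{p/2}$ integrated over $\mathbb{T}$ for $p<\infty$ (the paper phrases this as minimizing the resulting quadratic $\tfrac{p^2}{4}(c+1-2/p)^2+p-1$ over $c$, whereas you evaluate it at $c=0$ and $c=c_p$ and take the difference $-(p-2)^2/4$, which is the same computation), together with the identical explicit treatment of $p=\infty$ via $\sqrt{1+4\varepsilon^2}<1+\varepsilon$.
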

	\begin{proof}
		Let $c$ be a real number and compute 
		\begin{equation}\label{eq:lineabs2} 
			|c \varepsilon \overline{z}+1+\varepsilon z|^2 = 1 + 
			\varepsilon(1+c) (z+\overline{z}) + \varepsilon^2 \left((1+c^2) + 
			c(z^2+{\overline{z}^2}) \right). 
		\end{equation}
		Using the binomial expansion, we find that 
		\begin{align*}
			|c \varepsilon \overline{z}+1+\varepsilon z|^p = 1 &+ \varepsilon 
			\,\frac{p}{2}(1+c) (z+\overline{z}) \\
			&+ \varepsilon^2 \left(\frac{p}{2}\left((1+c^2) + 
			c(z^2+{\overline{z}^2})\right) + \binom{p/2}{2}(1+c)^2 
			(z+\overline{z})^2\right)\\
			&+O(\varepsilon^3) 
		\end{align*}
		for every sufficiently small $\varepsilon>0$. Integrating over 
		$\mathbb{T}$ and simplifying, we get
		\[\|c \varepsilon \overline{z}+1+\varepsilon z\|_p^p = 1 + 
\left(\frac{p^2}{4}(c+1-2/p)^2+p-1 
		\right)\varepsilon^2 + O(\varepsilon^3).\]
		If $1 \leq p < \infty$ and $\varepsilon>0$ is sufficiently small, then 
		the minimum is attained at $c=2/p-1$, which yields \eqref{eq:Tdline}. 
		It remains to deal with the case $p=\infty$. Inspecting 
		\eqref{eq:lineabs2} with $c=c_\infty=-1$, we find that the supremum is 
		attained at $z = \pm i$. Consequently, \eqref{eq:Tdline} reduces in this 
		case to
		\[\sqrt{1+4\varepsilon^2}<1+\varepsilon ,\]
		which holds for all sufficiently small $\varepsilon>0$. 
	\end{proof}
	\begin{lemma}\label{lem:Tdplane} 
		Fix $1 \leq p < \infty$, $p\neq 2$, and set $c_p := 1-p/2$. Then 
		\begin{align}
			\|1+ \varepsilon (z_1+z_2) + c_p \,\varepsilon^2 z_1 z_2\|_p &< 
			\|1+ \varepsilon (z_1+z_2)\|_p \label{eq:Tdplane1} \intertext{for 
				every sufficiently small $\varepsilon>0$. Moreover,} 
			\|1+z_1+z_2-z_1z_2\|_\infty &< \|1+z_1+z_2\|_\infty. 
			\label{eq:Tdplane2} \end{align}
	\end{lemma}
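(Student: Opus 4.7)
My plan is to follow the template of Lemma~\ref{lem:Tdline}: for \eqref{eq:Tdplane1} I will expand $|1+\varepsilon(z_1+z_2)+c\varepsilon^2 z_1 z_2|^p$ as a power series in $\varepsilon$ and show that the value $c_p = 1 - p/2$ minimizes the first $c$-dependent contribution; \eqref{eq:Tdplane2} will be handled by a direct $L^\infty$ estimate.

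For \eqref{eq:Tdplane1}, write $h := 1+\varepsilon(z_1+z_2) + c\varepsilon^2 z_1 z_2$. Using $|z_j|=1$ one computes
\[
|h|^2 = 1 + \varepsilon u_1 + \varepsilon^2 u_2 + c\varepsilon^3 u_1 + c^2 \varepsilon^4,
\]
where $u_1 := z_1+z_2+\bar z_1+\bar z_2$ and $u_2 := 2 + z_1\bar z_2 + \bar z_1 z_2 + c(z_1 z_2+\bar z_1\bar z_2)$. Setting $v := |h|^2 - 1$, the binomial series
\[
|h|^p = 1 + \tfrac{p}{2} v + \tbinom{p/2}{2} v^2 + \tbinom{p/2}{3}v^3 + \tbinom{p/2}{4}v^4 + O(\varepsilon^5)
\]
converges uniformly on $\mathbb{T}^2$ for small $\varepsilon$, so I may integrate against $m_2$ term by term. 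The $\varepsilon$- and $\varepsilon^3$-contributions vanish by parity, and the integrated $\varepsilon^2$-contribution reduces to $p^2/2$ independently of $c$. The core computation is the $\varepsilon^4$-coefficient, which requires the Parseval evaluations
\[
\int_{\mathbb{T}^2} u_1^2\,dm_2 = 4, \;\; \int_{\mathbb{T}^2} u_2^2\,dm_2 = 6+2c^2, \;\; \int_{\mathbb{T}^2} u_1^2 u_2\,dm_2 = 12+4c, \;\; \int_{\mathbb{T}^2} u_1^4\,dm_2 = 36.
\]
Assembling the four binomial contributions, the $c$-dependent part of the integrated $\varepsilon^4$-coefficient collapses to $\tfrac{p^2}{4}c^2 + \tfrac{p^2(p-2)}{4}c$, whose unique minimizer is $c_p = 1 - p/2$ with minimum value $-p^2(p-2)^2/16$. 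Since this quadratic vanishes at $c=0$, we obtain
\[
\|1+\varepsilon(z_1+z_2)+c_p\varepsilon^2 z_1 z_2\|_p^p - \|1+\varepsilon(z_1+z_2)\|_p^p = -\tfrac{p^2(p-2)^2}{16}\,\varepsilon^4 + O(\varepsilon^5),
\]
which is strictly negative for $p\neq 2$ and all sufficiently small $\varepsilon > 0$, yielding \eqref{eq:Tdplane1}.

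For \eqref{eq:Tdplane2}, I factor $1+z_1+z_2-z_1 z_2 = (1+z_2) + z_1(1-z_2)$ and invoke the triangle inequality to obtain $|1+z_1+z_2-z_1z_2| \leq |1+z_2|+|1-z_2|$. Writing $z_2 = e^{i\theta}$ and using $|a|+|b|\leq\sqrt{2(a^2+b^2)}$ gives $|1+z_2|+|1-z_2| = 2|\cos(\theta/2)|+2|\sin(\theta/2)| \leq 2\sqrt{2}$. Since $\|1+z_1+z_2\|_\infty = 3$ (attained at $z_1=z_2=1$) and $2\sqrt{2} < 3$, the strict inequality \eqref{eq:Tdplane2} follows.

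The main obstacle is the $\varepsilon^4$-order bookkeeping for \eqref{eq:Tdplane1}: unlike Lemma~\ref{lem:Tdline}, where $c$ already enters the second-order coefficient, here the terms $c(z_1 z_2 + \bar z_1\bar z_2)$ appearing in $u_2$ have zero mean on $\mathbb{T}^2$, so $c$ cannot influence the $\varepsilon^2$-coefficient and its effect only surfaces at fourth order. Identifying the precise cancellations among the four binomial contributions that produce the clean quadratic $\tfrac{p^2}{4}c^2 + \tfrac{p^2(p-2)}{4}c$ is where most of the work lies.
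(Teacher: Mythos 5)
Your proof is correct, and for \eqref{eq:Tdplane1} it takes a genuinely different route from the paper. The paper exploits analyticity: it raises the \emph{analytic} polynomial $1+\varepsilon(z_1+z_2)+c\varepsilon^2z_1z_2$ to the power $p/2$, decomposes the result into $m$-homogeneous polynomials $P_m$, and uses their mutual orthogonality in $L^2(\mathbb{T}^2)$ to get $\|f\|_p^p=\sum_m \varepsilon^{2m}\|P_m\|_2^2$; the entire $\varepsilon^4$-coefficient is then just $\|P_2\|_2^2=\frac{p^2}{4}(c+p/2-1)^2+\frac{p^2(p-2)^2}{32}$, so the odd-order vanishing and the fourth-order cancellations come for free from homogeneity, and only one small polynomial needs to be computed. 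You instead expand $|f|^p=(1+v)^{p/2}$ with $v=|f|^2-1$, which is the real-variable method the paper uses for Lemma~\ref{lem:Tdline}; this costs you the four moment evaluations and the bookkeeping across four binomial contributions, but all of your moments and the resulting $c$-dependent quadratic $\frac{p^2}{4}c^2+\frac{p^2(p-2)}{4}c$ check out (e.g.\ $\frac{p}{2}+2\binom{p/2}{2}=\frac{p^2}{4}$ and $8\binom{p/2}{2}+12\binom{p/2}{3}=\frac{p^2(p-2)}{4}$), and it has the same minimizer $c_p=1-p/2$ and minimum $-p^2(p-2)^2/16$ as the paper's expression. Your approach has the mild advantage of not needing a branch of $f^{p/2}$ and of being a direct continuation of the method of Lemma~\ref{lem:Tdline}; the paper's approach buys a much shorter computation and makes it transparent why $c$ first appears at order $\varepsilon^4$. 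Your treatment of \eqref{eq:Tdplane2} is identical to the paper's.
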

	\begin{proof}
		Let $c$ be a fixed real number. For sufficiently small $\varepsilon>0$, 
		expand 
		\begin{equation}\label{eq:Tdplanebinomial} 
			\big(1+ \varepsilon (z_1+z_2) + c \,\varepsilon^2 z_1 
			z_2\big)^{p/2} = \sum_{j=0}^\infty \binom{p/2}{j} \big(\varepsilon 
			(z_1+z_2) + c \,\varepsilon^2 z_1 z_2\big)^j. 
		\end{equation}
		In this expansion, any monomial of degree $m$ will have $\varepsilon^m$ 
		in front of it. Hence we can rearrange in terms of $m$-homogeneous 
		polynomials to obtain 
		\begin{equation}\label{eq:mhomexp} 
			\big(1+\varepsilon(z_1+z_2)+c\varepsilon^2z_1 z_2\big)^{p/2} = 
			\sum_{m=0}^\infty \varepsilon^m P_m(z). 
		\end{equation}
		Here $P_m$ is an $m$-homogeneous polynomial whose coefficients do not 
		depend on $\varepsilon$. Since $P_m \perp P_n$ in $L^2(\mathbb{T}^2)$ 
		for $m \neq n$, we get from \eqref{eq:mhomexp} that 
		\begin{equation}\label{eq:mhomnorm} 
			\|1+\varepsilon(z_1+z_2)+c \varepsilon^2z_1 z_2\|_p^p = 
			\sum_{m=0}^\infty \varepsilon^{2m} \|P_m\|_2^2. 
		\end{equation}
		We need the first three terms, which we can read off from 
		\eqref{eq:Tdplanebinomial}. They are 
		\begin{align*}
			P_0(z) &= 1, \\
			P_1(z) &= \frac{p}{2}(z_1+z_2), \\
			P_2(z) &= \frac{p}{2}(c+p/2-1)z_1z_2 + \binom{p/2}{2}(z_1^2+z_2^2). 
		\end{align*}
		Inserting this into \eqref{eq:mhomnorm} we find that 
		\begin{align*}
			\|1+\varepsilon(z_1+z_2)+c \varepsilon^2z_1 z_2\|_p^p = 1 &+ 
			\frac{p^2}{2} \varepsilon^2 \\
			&+ \frac{p^2}{4} \left((c+p/2-1)^2 + \frac{(p-2)^2}{8}\right) 
			\varepsilon^4 +O(\varepsilon^6). 
		\end{align*}
		Hence if $1 \leq p < \infty$, $p\neq 2$, and $\varepsilon>0$ is 
		sufficiently small, then the minimum is attained at $c=1-p/2$, and 
		\eqref{eq:Tdplane1} follows. 
		
		It remains to establish \eqref{eq:Tdplane2}. The right-hand side is 
		clearly equal to $3$. For the left-hand side, we rewrite 
		$1+z_1+z_2-z_1z_2 = 1+z_2 + z_1(1-z_2)$ which implies that
		\[\|1+z_1+z_2-z_1z_2\|_\infty = \sup_{z_2 \in \mathbb{T}} 
		\left(|1+z_2|+|1-z_2|\right) = 2\sqrt{2}.\]
		Hence \eqref{eq:Tdplane2} holds since the right-hand side equals $3$. 
	\end{proof}
	\begin{proof}
		[Proof Theorem~\ref{thm:Tdproj}: Necessity] Fix $1 \leq p < \infty$, 
		$p\neq 2$, and suppose that $\Lambda$ is a contractive projection set 
		for $L^p(\mathbb{T}^d)$. As above, we may assume without loss of 
		generality that $0$ is in $\Lambda$, and we are therefore required to 
		prove that $\Lambda$ is a subgroup of $\mathbb{Z}^d$. If 
		$\Lambda=\{0\}$, there is nothing to prove, so we shall assume that 
		there is at least one element $\neq 0$ in $\Lambda$ and use this to 
		establish that $\Lambda$ must be closed under the group operations. 
		
		Suppose that $\alpha$ is in $\Lambda \setminus\{0\}$. By substituting 
		$z = z^\alpha$ in Lemma~\ref{lem:Tdline} and using that $P_\Lambda$ is 
		a contraction on $L^p(\mathbb{T}^d)$, we conclude at once that 
		$-\alpha$ must be in $\Lambda$. Suppose next that $\alpha$ and $\beta$ 
		are two (not necessarily distinct) elements in $\Lambda \setminus\{0\}$. 
		We need to show that $\alpha+\beta$ is in $\Lambda$. There are two 
		cases.
		
		If $j \alpha \neq k \beta$ for every pair of integers $j,k\neq0$, then 
		we may substitute $z_1 = z^\alpha$ and $z_2 = z^\beta$ in 
		Lemma~\ref{lem:Tdplane}. The fact that $P_\Lambda$ is a contraction on 
		$L^p(\mathbb{T}^d)$ implies at once that $\alpha+\beta$ is in 
		$\Lambda$, since $z_1 z_2 = z^{\alpha+\beta}$.
		
		If $j \alpha = k \beta$ for integers $j,k\neq0$, then we may write 
		$\alpha = a \gamma$ and $\beta = b\gamma$, where $a,b$ are integers and 
		$\gamma$ in $\mathbb{Z}^d$ satisfies 
		$\gcd(\gamma_1,\gamma_2,\ldots,\gamma_d)=1$. We will prove that if 
		$P_\Lambda$ is a contraction on $L^p(\mathbb{T}^d)$, then $\Lambda$ 
		must contain all integer multiples of $\gcd(a,b)\gamma$. In particular, 
		$\alpha+\beta = (a+b)\gamma$ will be in $\Lambda$. 
		
		If $n\gcd(a,b)\gamma$ and $(n+1)\gcd(a,b)\gamma$ are in $\Lambda$, then 
		we may appeal to Lemma~\ref{lem:Tdline} to see that $(n+2) 
		\gcd(a,b)\gamma$ and $(n-1) \gcd(a,b)\gamma$ must be in $\Lambda$. 
		Hence it is sufficient to establish that $\gcd(a,b) \gamma$ is in 
		$\Lambda$. 
		
		To prove this, we use a modified Euclidean algorithm. We identify the 
		integer $n$ with the point $n \gcd(a,b) \gamma$ and start with the 
		integers $a_1=a/\gcd(a,b)$ and $b_1=b/\gcd(a,b)$. We may assume without 
		loss of generality that $0<a_1<b_1$, since if $a_1=b_1$, there is 
		nothing to do. By Lemma~\ref{lem:Tdline}, we know that $c_1 = 2a_1 - 
		b_1$ is in $\Lambda$. We also see that $\gcd(a_1,b_1)=\gcd(a_1,c_1)$ 
		and $0 \leq |c_1|<b_1$. If $c_1=0$, then $a_1|b_1$ and $\gcd(a,b)=a$. 
		If $|c_1|>0$, then $\max(a_1,b_1)>\max(a_1,|c_1|)$ and we repeat the 
		procedure starting with $a_1$ and $|c_1|$. 
	\end{proof}
	
	\subsection{Proof of Theorem~\ref{thm:TdprojHp} for 
		\texorpdfstring{$p=\infty$}{p=infty}} \label{subsec:infty} Since 
	$H^p(\mathbb{T}^d)$ is a subspace of $L^p(\mathbb{T}^d)$, we know from 
	Theorem~\ref{thm:Tdproj} that if $\Gamma$ is the restriction of a coset in 
	$\mathbb{Z}^d$ to $\mathbb{N}_0^d$, then $\Gamma$ is a contractive 
	projection set for $H^p(\mathbb{T}^d)$. In this case, $\Gamma = 
	\Lambda(\Gamma) \cap \mathbb{N}_0^d$, where we recall that 
	$\Lambda(\Gamma)$ denotes the coset generated by $\Gamma$.
	
	Let us take a look at how Lemma~\ref{lem:Tdline} and 
	Lemma~\ref{lem:Tdplane} can be applied in the context of 
	$H^p(\mathbb{T}^d)$. Pick three affinely independent points 
	$\alpha,\beta,\gamma$ from $\Gamma$. Consider the function $f(z) = c 
	\varepsilon \overline{z}+1+\varepsilon z$ from Lemma~\ref{lem:Tdline}. By 
	replacing $f$ by
	\[g(z) := z^\beta f\left(z^{\alpha-\beta}\right) = c \varepsilon 
	z^{2\beta-\alpha} + z^\beta + \varepsilon z^\alpha,\]
	we see that Lemma~\ref{lem:Tdline} implies that if $\Gamma$ is a 
	contractive projection set for $H^p(\mathbb{T}^d)$ and the point 
	$2\beta-\alpha$ is in $\mathbb{N}_0^d$, then it must be included in 
	$\Gamma$. Geometrically, $2\beta-\alpha$ is the point obtained by 
	\emph{linear reflection} of $\alpha$ through $\beta$. By similar 
	considerations starting from Lemma~\ref{lem:Tdplane}, we also find that if 
	the point $\alpha + (\beta-\alpha) + (\gamma-\alpha)$ is in 
	$\mathbb{N}_0^3$, then it must be included in $\Gamma$ whenever $\Gamma$ is 
	a contractive projection set. Geometrically, this new point is obtained by 
	\emph{triangular reflection} of $\alpha$ through $\beta$ and $\gamma$.
	
	Figure~\ref{fig:rqc} contains all the points obtained by linear and 
	triangular reflections starting from the set $\Gamma = 
	\{(3,0,0),(0,3,0),(1,1,1)\}$. From the figure, we see that the necessary 
	conditions derived from Lemma~\ref{lem:Tdline} and Lemma~\ref{lem:Tdplane} 
	provide no insight into whether this $\Gamma$ is a contractive projection 
	set for $H^p(\mathbb{T}^3)$. 
	
	Moreover, when comparing Figure~\ref{fig:rqc} and Figure~\ref{fig:nocorner} 
	(which are based on the same initial set $\Gamma$), we see that the linear 
	and triangular reflections in Figure~\ref{fig:rqc} correspond precisely to 
	the points in $E_1(\Gamma)$. This is not a coincidence. It is easy to 
	verify that every $1$-extension is the same as a linear reflection or a 
	triangular reflection. In the latter case, we can see this by rewriting
	\[\alpha + (\beta-\alpha) + 
	(\gamma-\alpha)=\beta+(\beta-\alpha)-(\beta-\gamma).\]
	Is it therefore possible to prove Theorem~\ref{thm:TdprojHp} (a) using 
	Lemma~\ref{lem:Tdline} and Lemma~\ref{lem:Tdplane}.
	
	To see what additional estimates are required to handle case (b) and (c) of 
	Theorem~\ref{thm:TdprojHp}, recall that every $\lambda$ in 
	$\Lambda(\Gamma)$ can be represented as 
	\begin{equation}\label{eq:lrepn} 
		\lambda = \gamma_0 + \sum_{j=1}^n m_j (\gamma_j-\gamma_0), 
	\end{equation}
	where $m_j$ are integers and $\{\gamma_0,\gamma_1,\ldots,\gamma_n\}$ is an 
	affinely independent subset in $\Gamma$ for $n = \dim(\Lambda(\Gamma))$. If 
	we hope to prove Theorem~\ref{thm:TdprojHp} by the same approach as 
	Theorem~\ref{thm:Tdproj}, we would require estimates for every 
	representation \eqref{eq:lrepn}. 
	
	In the case $p=\infty$, we may actually establish the additional estimates 
	in one fell swoop. This is especially fortunate since the duality 
	techniques that we will employ in the next section to study the case $1 
	\leq p < \infty$, $p\neq 2$, do not apply when $p=\infty$. 
	
	\begin{figure}
		\centering
		\begin{tikzpicture}
			\begin{axis}[
				axis equal image,
				xlabel= $x$, 
				ylabel = $y$,
				axis lines = center,
				xmin = -4,
				xmax = 7,
				ymin = -4,
				ymax = 7,
				every axis x label/.style={
					at={(ticklabel* cs:1.025)},
					anchor=west,
				},
				every axis y label/.style={
					at={(ticklabel* cs:1.025)},
					anchor=south,
				},
				ticks=none,
				axis line style={->}]
				
				\addplot[thin, name path=t1, draw opacity=0, domain=0:3, 
				samples=5] {3-x};
				\addplot[thin, name path=t2, draw opacity=0, domain=0:3, 
				samples=5] {0};
				\addplot[color=black, fill=black, fill opacity=0.25]
				fill between[of=t1 and t2, soft clip={domain=0:3},];
				
				\node[circle, draw, fill=black, scale=0.5] at (3,0){};
				\node[circle, draw, fill=black, scale=0.5] at (0,3){};
				\node[circle, draw, fill=black, scale=0.5] at (1,1){};
				
				\node[circle, draw, color=blue, fill=blue, scale=0.5, 
				opacity=1] at (-1,2){};
				\node[circle, draw, color=blue, fill=blue, scale=0.5, 
				opacity=1] at (2,-1){};
				
				\node[circle, draw, color=blue, fill=blue, scale=0.5, 
				opacity=1] at (5,-1){};
				\node[circle, draw, color=blue, fill=blue, scale=0.5, 
				opacity=1] at (-1,5){};
				\node[circle, draw, color=blue, fill=blue, scale=0.5, 
				opacity=1] at (6,-3){};
				\node[circle, draw, color=blue, fill=blue, scale=0.5, 
				opacity=1] at (-3,6){};
				
				\node[circle, draw, color=red, fill=red, scale=0.5, opacity=1] 
				at (2,2){};
				\node[circle, draw, color=red, fill=red, scale=0.5, opacity=1] 
				at (4,-2){};
				\node[circle, draw, color=red, fill=red, scale=0.5, opacity=1] 
				at (-2,4){};
				
			\end{axis}
		\end{tikzpicture}
		\caption{The points $\lambda$ obtained by {\color{blue}linear} and 
			{\color{red} triangular reflection} starting from the set 
			$\Gamma=\{(3,0,0),(0,3,0),(1,1,1)\}$, represented in the projected 
			plane defined by $z=3-x-y$. The shaded triangle represents the 
			intersection of this plane and the narrow cone. Note that none of 
			the points obtained are in $\mathbb{N}_0^3$ and that the point 
			$(0,0,3)$ is not obtained.}
		\label{fig:rqc}
	\end{figure}
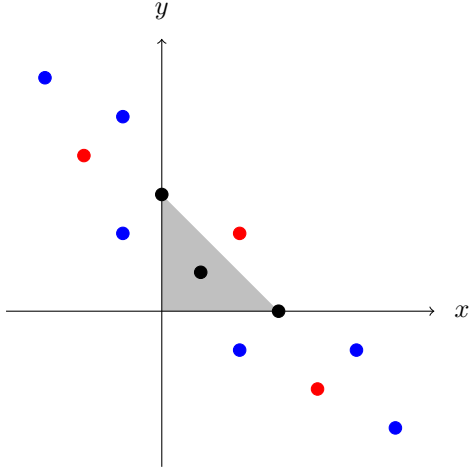
	
	\begin{lemma}\label{lem:infest} 
		Fix any $\alpha$ in $\mathbb{Z}^d$. Then 
		\begin{equation}\label{eq:infest} 
			\Bigg\|d + \sum_{j=1}^d z_j - \varepsilon z^\alpha \Bigg\|_\infty < 
			\Bigg\|d + \sum_{j=1}^d z_j\Bigg\|_\infty 
		\end{equation}
		for every sufficiently small $\varepsilon>0$. 
	\end{lemma}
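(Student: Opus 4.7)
The right-hand side of \eqref{eq:infest} equals $2d$, and the key point is that the sum $f(z) := d + \sum_{j=1}^d z_j = \sum_{j=1}^d (1+z_j)$ attains its supremum modulus $2d$ on $\mathbb{T}^d$ only at the single point $z_0 := (1,1,\ldots,1)$. Indeed, each summand $1+z_j$ has modulus at most $2$ with equality iff $z_j=1$, and simultaneous argument equality in the triangle inequality $|\sum(1+z_j)|\leq \sum|1+z_j|$ forces $z=z_0$. So the plan is to split $\mathbb{T}^d$ into a small neighborhood $U$ of $z_0$ and its complement, and argue that on each part the perturbation $g_\varepsilon(z) := f(z)-\varepsilon z^\alpha$ has modulus strictly less than $2d$ once $\varepsilon>0$ is small enough.

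On $\mathbb{T}^d \setminus U$ the argument is soft: by the uniqueness of the maximum together with compactness, there exists $\delta>0$ such that $|f(z)|\leq 2d-\delta$ for all $z$ outside $U$. The trivial estimate $|g_\varepsilon(z)|\leq |f(z)|+\varepsilon$ then yields $|g_\varepsilon(z)|\leq 2d-\delta+\varepsilon<2d$ for every $\varepsilon<\delta$.

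On $U$ I would use the identity
\[
|g_\varepsilon(z)|^2 = |f(z)|^2 - 2\varepsilon \,\mre\!\left(\overline{f(z)}\,z^\alpha\right) + \varepsilon^2.
\]
At the point $z_0$ we have $\overline{f(z_0)}\,z_0^\alpha = 2d>0$, so by continuity we may shrink $U$ so that $\mre(\overline{f(z)}\,z^\alpha)\geq d$ throughout $U$. Combined with the trivial bound $|f(z)|\leq 2d$, this gives
\[
|g_\varepsilon(z)|^2 \leq 4d^2 - 2\varepsilon d + \varepsilon^2 < 4d^2
\]
for every $\varepsilon \in (0, 2d)$. Taking the maximum of the two cases over $\mathbb{T}^d$ proves \eqref{eq:infest}.

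I do not expect any serious obstacle: the estimate hinges only on the uniqueness of the maximum of $|f|$ and the positivity of $\mre(\overline{f(z_0)}\,z_0^\alpha)$, both of which are immediate. Note that the argument is insensitive to $\alpha$, which is exactly the feature needed in the subsequent application where reflections across arbitrarily many points of $\Gamma$ must be controlled uniformly.
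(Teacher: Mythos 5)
Your proof is correct, and it takes a somewhat different route from the paper's. Both arguments rest on the same underlying observation --- that $|f|$ attains its maximum $2d$ only at $z_0=(1,\dots,1)$, so one splits $\mathbb{T}^d$ into a region near $z_0$ and its complement --- but the execution differs. The paper's decomposition is $\varepsilon$-dependent: it isolates the region where $|1+z_j|\geq 2(1-\varepsilon)$ for all $j$, converts this into angle bounds $|\theta_j|\leq 4\sqrt{\varepsilon}$, and then Taylor-expands $|1+z_j-\varepsilon z^\alpha|^2 = 4-4\varepsilon-\theta_j^2+O(\varepsilon^2)$ coordinate by coordinate; outside that region the elementary bound $2(d-1)+|1+z_j|+\varepsilon\leq 2d-\varepsilon$ suffices. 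You instead fix a neighborhood $U$ of $z_0$ once and for all (independent of $\varepsilon$), use compactness to get a uniform gap $|f|\leq 2d-\delta$ off $U$, and on $U$ use the exact identity $|f-\varepsilon z^\alpha|^2=|f|^2-2\varepsilon\,\mre(\overline{f}z^\alpha)+\varepsilon^2$ together with $\mre(\overline{f(z_0)}z_0^\alpha)=2d>0$ and continuity. Your version is softer and shorter, avoiding explicit trigonometric expansions, at the cost of being non-quantitative (the paper's argument gives an explicit admissible range of $\varepsilon$ in terms of $|\alpha|$, which is not needed for the lemma as stated). One small caveat on your closing remark: the neighborhood $U$, and hence the threshold on $\varepsilon$, does depend on $\alpha$ through the modulus of continuity of $z\mapsto z^\alpha$; this is harmless because the lemma fixes $\alpha$ before choosing $\varepsilon$, and the application in the proof of Theorem~\ref{thm:TdprojHp} for $p=\infty$ only requires a single $\alpha$, but the argument is not literally uniform in $\alpha$.
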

	\begin{proof}
		The right-hand side of \eqref{eq:infest} is plainly equal to $2d$, so 
		it suffices to show that the left-hand side is strictly less than $2d$ 
		for some sufficiently small $\varepsilon>0$. By the triangle inequality, 
		we find that
		\[\Bigg|d + \sum_{j=1}^d z_j - \varepsilon z^\alpha \Bigg| \leq 
		2(d-1)+|1+z_j|+\varepsilon\]
		for any $j=1,2,\ldots,d$. Suppose that the supremum on the left-hand 
		side of \eqref{eq:infest} may be attained for $|1+z_j| \leq 
		2(1-\varepsilon)$ for some $j$. Then, clearly, the left-hand side is 
		equal to $2d-\varepsilon$, and we are done. Suppose therefore that
		\[\Bigg\|d + \sum_{j=1}^d z_j - \varepsilon z^\alpha \Bigg\|_\infty = 
		\sup_{\substack{z \in \mathbb{T}^d \\
				|1+z_j|\geq 2(1-\varepsilon)}}\Bigg|d + \sum_{j=1}^d z_j - 
		\varepsilon z^\alpha \Bigg|.\]
		To handle this case, we first estimate
		\[\sup_{\substack{z \in \mathbb{T}^d \\
				|1+z_j|\geq 2(1-\varepsilon)}}\Bigg|d + \sum_{j=1}^d z_j - 
		\varepsilon z^\alpha \Bigg| \leq 2(d-1) + \sup_{\substack{z \in 
				\mathbb{T}^d \\
				|1+z_j|\geq 2(1-\varepsilon)}} \left|1+z_1 - \varepsilon 
		z^\alpha\right| .\]
		Hence we are done if we can prove that 
		\begin{equation}\label{eq:onetwo} 
			\sup_{\substack{z \in \mathbb{T}^d \\
					|1+z_j|\geq 2(1-\varepsilon)}} \left|1+z_1 - \varepsilon 
			z^\alpha\right| < 2 
		\end{equation}
		for some sufficiently small $\varepsilon>0$. To this end, we see that 
		when $0<\varepsilon<1$, we have
		\[|1+z_j| \geq 2(1-\varepsilon) \qquad \Longleftrightarrow \qquad 
		|\theta_j| \leq 2\arccos(1-\varepsilon).\]
		Hence, if $|1+z_j| \geq (2-\varepsilon)$, then certainly $|\theta_j| 
		\leq 4 \sqrt{\varepsilon}$. If this estimate holds for every 
		$j=1,2,\ldots,d$ and $z^\alpha = e^{i\vartheta}$, then $|\vartheta| 
		\leq 4|\alpha| \sqrt{\varepsilon}$. By expanding and using Taylor's 
		theorem, we find that 
		\begin{align*}
			\left|1+z_j - \varepsilon z^\alpha\right|^2 &= 
			(1+\cos{\theta_j}-\varepsilon \cos{\vartheta})^2 + 
			(\sin{\vartheta_j}-\varepsilon \sin{\vartheta})^2 \\
			&= 2 + 2\cos(\theta_j) + \varepsilon^2 -2 \varepsilon 
			\big((1+\cos{\theta_j})\cos{\vartheta}-\varepsilon 
			\sin{\theta_j}\sin{\vartheta}\big) \\
			&= 4 - 4\varepsilon- \theta_j^2 + O(\varepsilon^2), 
		\end{align*}
		which establishes \eqref{eq:onetwo} for every sufficiently small 
		$\varepsilon>0$. 
	\end{proof}
	\begin{proof}
		[Proof of Theorem~\ref{thm:TdprojHp} for $p=\infty$.] Suppose that 
		$\Gamma$ is not the restriction of a coset in $\mathbb{Z}^d$ to 
		$\mathbb{N}_0^d$. Hence we can find $\lambda$ in $\left(\Lambda(\Gamma) 
		\cap \mathbb{N}_0^d\right) \setminus \Gamma$. By \eqref{eq:lrepn} we 
		write
		\[\lambda = \gamma_0 + \sum_{j=1}^n m_j (\gamma_j-\gamma_0),\]
		where $m_j$ are integers and $\{\gamma_0,\gamma_1,\ldots,\gamma_n\}$ is 
		an affinely independent set in $\Gamma$. Let
		\[f_1(z) = n + \sum_{j=1}^n z_j -\varepsilon z^{\alpha},\qquad f_2(z) 
        =n + \sum_{j=1}^n z_j\]
		be the functions from Lemma~\ref{lem:infest} with 
		$\alpha=(m_1,m_2,\ldots,m_n)$ and define
		\[g_i(z) = z^{\gamma_0} 
		f_i\left(z^{\gamma_1-\gamma_0},\,z^{\gamma_2-\gamma_0},\,\ldots,\,z^{
			\gamma_n-\gamma_0}\right), \quad i=1,2.\]
		Since $\{\gamma_0,\gamma_1,\ldots,\gamma_n\}$ is an affinely 
		independent set, the estimates of Lemma~\ref{lem:infest} imply that
		$\|g_1\|_\infty < \|g_2\|_\infty$. 
		Hence $\Gamma$ is not a contractive projection set for 
		$H^\infty(\mathbb{T}^d)$. 
	\end{proof}

	\section{Contractive projection sets for 
		\texorpdfstring{$H^p(\mathbb{T}^d)$}{Hp(Td)} with 
		\texorpdfstring{$1\leq p<\infty$}{1<=p<infty}} \label{sec:TdprojHp}
	
	\subsection{Overview} \label{subsec:org} This section is devoted to the 
	proof of Theorem~\ref{thm:TdprojHp} for $p < \infty$. We begin in the next 
	subsection by reformulating the problem in terms of duality. We then record 
	some immediate consequences, which include the proof of 
	Theorem~\ref{thm:shapirogeometric} and the verification of 
	Theorem~\ref{thm:TdprojHp} when $k=1$ and when $p$ not even integer.
	
	Section~\ref{subsec:duality} sets the stage for the most substantial part 
	of the proof of Theorem~\ref{thm:TdprojHp} which splits naturally into 
	three parts: 
	\begin{itemize}
		\item Section~\ref{subsec:cex}: The necessity of the conditions in part 
		(b) and (c); 
		\item Section~\ref{subsec:proofa}: The sufficiency of the case $d\geq 
		3$ and $k=2$; 
		\item Section~\ref{subsec:proofb}: The sufficiency of the cases $d=k=2$ 
		and $d=k=3$. 
	\end{itemize}
	The necessity part requires four examples, while the two sufficiency parts 
	rely on making appropriate extensions of a given subset of $\mathbb N_0^d$ 
	in terms of a sequence of $1$- or $2$-extensions. Both constructions are 
	quite intricate in the case of $2$-extensions, and they also differ 
	substantially. The arguments used in the case $d\geq 3$ and $k=2$ combine 
	geometric and arithmetic considerations, while those used in the case 
	$d=k=3$, relying on linear algebra, are more of a combinatorial nature. 
	Another notable distinction between the two cases is that the first deals 
	primarily with finite sets, while the second is concerned with extensions 
	of finite sets to infinite sets.
	
	\subsection{Duality reformulation with some immediate consequences} 
	\label{subsec:duality} The main tool for the case $p<\infty$ of 
	Theorem~\ref{thm:TdprojHp} is the following result.
	\begin{lemma}\label{lem:shapiro} 
		Fix $1 \leq p < \infty$ and $d\geq1$. A set of frequencies $\Gamma$ in 
		$\mathbb{N}_0^d$ is a contractive projection set for 
		$H^p(\mathbb{T}^d)$ if and only if 
		\begin{equation}\label{eq:shapiro} 
			\int_{\mathbb{T}^d} |f(z)|^{p-2} f(z) \, 
			\overline{z^\lambda}\,dm_d(z) = 0 
		\end{equation}
		for every $f(z) = \sum_{\gamma \in \Gamma} a_\gamma z^\gamma$ in 
		$H^p(\mathbb{T}^d)$ and every $\lambda$ in $\left(\Lambda(\Gamma) \cap 
		\mathbb{N}_0^d\right) \setminus \Gamma$. 
	\end{lemma}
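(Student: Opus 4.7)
The plan is to identify this statement as the standard duality/variational characterization of a contractive projection in $L^p$ applied to the subspace of $\Gamma$-polynomials inside $H^p(\mathbb{T}^d)$, and then explain why the set of test monomials can be restricted from $\mathbb{N}_0^d \setminus \Gamma$ down to $(\Lambda(\Gamma)\cap\mathbb{N}_0^d)\setminus\Gamma$.

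\textbf{Step 1 (Reformulation as a minimization).} The operator $P_\Gamma$ extends to a contraction on $H^p(\mathbb{T}^d)$ if and only if, for every trigonometric polynomial $f(z)=\sum_{\gamma\in\Gamma}a_\gamma z^\gamma$ and every trigonometric polynomial $h$ with spectrum in $\mathbb{N}_0^d\setminus\Gamma$, one has $\|f\|_p\leq \|f+h\|_p$. Indeed, any trigonometric polynomial $g$ in $H^p$ splits uniquely as $g=P_\Gamma g+h$ with $h$ of this form, and a density argument takes care of general $g\in H^p$.

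\textbf{Step 2 (Variational identity).} For $p\geq 1$, the map $t\mapsto \|f+th\|_p^p$ is convex on $\mathbb{R}$, so $\|f\|_p\leq\|f+th\|_p$ for all $t\in\mathbb{R}$ reduces to the vanishing of the derivative at $t=0$. Differentiating under the integral — justified because $f$ is a nonzero polynomial and hence $\{f=0\}$ has $m_d$-measure zero, which together with $|f|^{p-2}f\in L^{p'}(\mathbb{T}^d)$ provides an admissible majorant — one finds
\[
\frac{d}{dt}\Big|_{t=0}\|f+th\|_p^p=p\,\mre\int_{\mathbb{T}^d}|f(z)|^{p-2}\overline{f(z)}\,h(z)\,dm_d(z).
\]
Applying this to $h$ and then to $ih$ shows that contractivity is equivalent to the unsigned identity $\int_{\mathbb{T}^d}|f|^{p-2}\bar f\,h\,dm_d=0$ for every admissible $h$. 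Specializing to $h(z)=z^\lambda$ for $\lambda\in\mathbb{N}_0^d\setminus\Gamma$ and taking complex conjugates yields exactly \eqref{eq:shapiro}, while linearity in $h$ shows conversely that the monomial version recovers the full identity.

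\textbf{Step 3 (Reduction to $\Lambda(\Gamma)$).} It remains to explain why \eqref{eq:shapiro} is automatically satisfied for $\lambda\in\mathbb{N}_0^d\setminus\Lambda(\Gamma)$. Fix any $\gamma_0\in\Gamma$ and let $\Lambda_0:=\Lambda(\Gamma)-\gamma_0$, a subgroup of $\mathbb{Z}^d$ with annihilator $\Lambda_0^\perp\subset\mathbb{T}^d$. For every $\zeta\in\Lambda_0^\perp$ we have $\zeta^{\gamma-\gamma_0}=1$ for all $\gamma\in\Gamma$, whence $f_\zeta(z):=f(\zeta_1z_1,\ldots,\zeta_dz_d)=\zeta^{\gamma_0}f(z)$. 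Substituting this into \eqref{eq:shapiro} and using translation invariance of $m_d$, the left-hand side transforms into $\zeta^{\gamma_0-\lambda}$ times itself. Choosing $\zeta\in\Lambda_0^\perp$ with $\zeta^{\gamma_0-\lambda}\neq 1$ — possible precisely when $\gamma_0-\lambda\notin\Lambda_0$, i.e.\ when $\lambda\notin\Lambda(\Gamma)$ — forces the integral to vanish.

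\textbf{Main obstacle.} The only delicate point is the differentiation under the integral at $p=1$ (and more generally $1\leq p<2$), where $|f|^{p-2}$ is unbounded on $\{f=0\}$. This is handled by the fact that a nonzero trigonometric polynomial $f$ has finitely many zeros on $\mathbb{T}^d$ in an appropriate sense, together with $|f|^{p-2}f\in L^{p'}$, so the $L^{p'}$–$L^p$ dual pairing $\langle |f|^{p-2}\bar f,h\rangle$ is well defined and the one-sided derivatives of $\|f+th\|_p^p$ at $t=0$ coincide. Once this routine technicality is dispatched, Steps 1–3 assemble into the claimed equivalence.
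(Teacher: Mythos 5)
Your proposal is correct, and it handles the key reduction to $\Lambda(\Gamma)$ by a genuinely different mechanism than the paper. Both arguments rest on the same variational/orthogonality criterion (your Step 2 is essentially a re-derivation of Shapiro's theorem, which the paper simply cites), but the paper restricts the test frequencies to $\left(\Lambda(\Gamma)\cap\mathbb{N}_0^d\right)\setminus\Gamma$ by factoring $P_\Gamma = P_\Gamma P_{\Lambda(\Gamma)}$ and invoking Theorem~\ref{thm:Tdproj} to dispose of $P_{\Lambda(\Gamma)}$, whereas you first characterize contractivity by orthogonality to \emph{all} of $\mathbb{N}_0^d\setminus\Gamma$ and then show the conditions for $\lambda\notin\Lambda(\Gamma)$ are vacuous via the symmetry $I=\zeta^{\gamma_0-\lambda}I$ for $\zeta$ in the annihilator of $\Lambda(\Gamma)-\gamma_0$. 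Your Step 3 is a clean argument that needs only the separation of points by characters of $\mathbb{Z}^d/\Lambda_0$, not the full contractivity of $P_{\Lambda(\Gamma)}$ on $L^p$; the paper's route buys brevity by reusing Theorem~\ref{thm:Tdproj} (whose proof, via averaging over the annihilator, uses the same group-theoretic input). Two minor points to tighten: the lemma asserts \eqref{eq:shapiro} for every $f$ in $H^p(\mathbb{T}^d)$ supported on $\Gamma$, not only for polynomials, so the necessity direction should be run with general such $f$ (the same computation works, with the measure-zero property of $\{f=0\}$ supplied by $\log|f|\in L^1(\mathbb{T}^d)$ for nonzero $f$ in $H^1$, as the paper notes); and a nonzero analytic polynomial on $\mathbb{T}^d$ with $d\geq 2$ need not have finitely many zeros --- its zero set can be a positive-dimensional variety --- but all your argument requires is that it have $m_d$-measure zero, which is true.
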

	\begin{proof}
		A function $f$ in $L^p(\mathbb T^d)$ is said to be \emph{orthogonal} to 
		a closed subspace $Y$ of $L^p(\mathbb T^d)$ if
		\[\|f\|_p \leq \|f+h\|_p\]
		for every $h$ in $Y$. We will use the following characterization of 
		orthogonality due to Shapiro (see~\cite[Thm.~4.2.1 and 
		Thm.~4.2.2]{Shapiro}): a function $f$ is \emph{orthogonal} to $Y$ if 
		and only if
		\[\int_{\mathbb{T}^d} |f(z)|^{p-2} f(z) \, \overline{h(z)}\,dm_d(z) = 
		0,\]
		for every $h$ in $Y$. When $p = 1$, this holds if in addition the zero 
		set $\{f = 0\}$ has measure $0$, which will be the case because the 
		functions $f$ that we consider are in $H^1(\mathbb T^d)$, and thus 
		$\log|f|$ will be in $L^1(\mathbb T^d)$ (see 
		\cite[Thm.~3.3.5]{Rudin69}). We begin by proving the necessity of 
		\eqref{eq:shapiro}. Consider $f(z) = \sum_{\gamma \in \Gamma} a_\gamma 
		z^\gamma$ in $H^p(\mathbb{T}^d)$ and for any $\lambda$ in 
		$\left(\Lambda(\Gamma) \cap \mathbb{N}_0^d\right) \setminus \Gamma$ 
		take $Y$ to be the one-dimensional space spanned by $z^\lambda$. Since 
		$\Gamma$ is a contractive projection set, $\|f\|_p \leq \|f+ c 
		z^\lambda\|_p$ for any complex number $c$, thus $f$ is orthogonal to 
		$Y$, and \eqref{eq:shapiro} holds.
		
		To prove the reverse implication, we start by noting that since 
		$\Lambda(\Gamma)$ is a coset, $P_{\Lambda(\Gamma)}$ is a contraction on 
		$L^p(\mathbb T^d)$ by Theorem~\ref{thm:Tdproj}. Thus writing 
		$P_{\Gamma} = P_\Gamma P_{\Lambda(\Gamma)}$, we see that to prove that 
		$P_\Gamma$ is a contraction on $H^p(\mathbb{T}^d)$, we just need to show 
		that for any $h$ in $H^p(\mathbb T^d)$ with Fourier coefficients 
		supported on $\Lambda(\Gamma) \cap \mathbb{N}_0^d$, we have 
		$\|P_{\Gamma}h\|_p \leq \|h\|_p$. In fact, since the polynomials form a 
		dense subset of $H^p(\mathbb T^d)$ and $P_{\Lambda(\Gamma)} g$ is a 
		polynomial whenever $g$ is a polynomial, it suffices to prove this for 
		an arbitrary polynomial $h$. If we define $Y$ as the finite-dimensional 
		subspace of $H^p(\mathbb{T}^d)$ spanned by $\{z^\lambda\}$ for $\lambda$ 
		in the spectrum of $h$ minus $\Gamma$, we may decompose $h$ as 
		$h=P_\Gamma h+r$, where $r$ belongs to $Y$. By \eqref{eq:shapiro}, 
		$P_\Gamma h$ is orthogonal to $Y$, thus $\|P_{\Gamma}h\|_p \leq 
		\|P_{\Gamma}h+r \|_p$. 
	\end{proof}
	\begin{proof}
		[Proof of Theorem~\ref{thm:TdprojHp} for $p<\infty$ not an even 
		integer] If $\Gamma$ is not the restriction of a coset in $\mathbb{Z}^d$ 
		to $\mathbb{N}_0^d$, there is some $\lambda$ in $\left(\Lambda(\Gamma) 
		\cap \mathbb{N}_0^d\right) \setminus \Gamma$. Set 
		$n=\dim(\Lambda(\Gamma))\geq1$. There is an affinely independent subset 
		$\{\gamma_0,\gamma_1,\ldots,\gamma_n\}$ of $\Gamma$ which generates 
		$\Lambda(\Gamma)$. In particular, we may write
		\[\lambda = \gamma_0 + \sum_{j=1}^n m_j (\gamma_j-\gamma_0),\]
		where $m_j$ are integers. Since $\lambda$ is not in $\Gamma$, we may 
		assume without loss of generality that $m_1>0$ by reordering 
		$\{\gamma_0,\gamma_1,\ldots,\gamma_n\}$ if necessary. Similarly, we may 
		assume that there is some $1\leq k_0 \leq n$ such that $m_1,\ldots, 
		m_{k_0} \geq 0$ and $m_{k_0+1},\ldots, m_n<0$. We set
		\[m_+:= \sum_{j=1}^n \max(m_j,0), \qquad m_-:= -\sum_{j=1}^n 
		\min(m_j,0) \qquad \text{and} \qquad M:=m_++m_-.\]
		Our assumptions imply that $m_+\geq 1$. Set
		\[f(z) := z^{\gamma_0} + \varepsilon \sum_{j=1}^n z^{\gamma_j}\]
		for $0<\varepsilon<1/n$ and define $g(z) = \sum_{j=1}^n 
		z^{\gamma_j-\gamma_0}$. By the binomial series, we obtain
		\[|f(z)|^{p-2} = \sum_{k_1,k_2=0}^\infty 
		\binom{p/2-1}{k_1}\binom{p/2-1}{k_2} \varepsilon^{k_1+k_2} g(z)^{k_1} 
		g(\overline{z})^{k_2}.\]
		Since $p$ is not an even integer, none of the binomial coefficients 
		vanish. Writing
		\[f(z) = (1+\varepsilon g(z)) z^{\gamma_0},\]
		we see that
		\[F(\varepsilon) := \int_{\mathbb{T}^d} |f(z)|^{p-2} 
		f(z)\,\overline{z^\lambda}\,dm_d(z),\]
		is a non-trivial power series in $\varepsilon$. Indeed, we observe that
		\[F(\varepsilon)=\sum_{k=M}^\infty c_k \varepsilon^{k},\]
		where
		\[c_M= \binom{p/2-1}{m_-} 
		\binom{m_+}{m_1,\ldots,m_{k_0}}\binom{m_-}{|m_{k_0+1}|,\ldots,|m_n|} 
		\left( \binom{p/2-1}{m_+} +\binom{p/2-1}{m_+-1}\right)\]
		which evidently is nonzero. Consequently, there is some 
		$0<\varepsilon<1/n$, such that $F(\varepsilon)\neq0$. We invoke 
		Lemma~\ref{lem:shapiro} to conclude that $\Gamma$ is not a contractive 
		projection set. 
	\end{proof}
	
	It remains to deal with the most difficult case, which is when $p=2(n+1)$ 
	for some non-negative integer $n$. We begin by establishing 
	Theorem~\ref{thm:shapirogeometric}, which is a geometric reformulation of 
	Lemma~\ref{lem:shapiro}.
	\begin{proof}
		[Proof of Theorem~\ref{thm:shapirogeometric}] We will use 
		Lemma~\ref{lem:shapiro}. Let $\Gamma_0$ be any finite subset of 
		$\Gamma$ and consider the polynomial
		\[f(z) = \sum_{\alpha \in \Gamma_0} z^\alpha.\]
		We fix some $\gamma$ in $\Gamma_0$ and study the Fourier coefficients 
		of $z^\gamma |f(z)|^{2n}$. By the binomial theorem
		\[|f(z)|^{2n} = \sum_{j,k=0}^n 
		\binom{n}{j}\binom{n}{k}\bigg(\sum_{\alpha \in \Gamma_0 
			\setminus\{\gamma\}} z^{\alpha-\gamma}\bigg)^j\bigg(\sum_{\beta \in 
			\Gamma_0 \setminus\{\gamma\}} z^{-(\beta-\gamma)}\bigg)^k.\]
		The binomial coefficients are strictly positive, so by expanding 
		further we see that $|f|^{2n}$ has strictly  positive Fourier 
		coefficients for the frequencies which may be represented by
		\[\sum_{\alpha \in \Gamma_0 \setminus\{\gamma\}} j_\alpha 
		(\alpha-\gamma) - \sum_{\beta \in \Gamma_0 \setminus\{\gamma\}} k_\beta 
		(\beta-\gamma)\]
		where the coefficients $j_\alpha$ and $k_\beta$ are non-negative 
		integers whose individual sums do not exceed $n$. Equivalently, we 
		obtain the exponents
		\[\sum_{\alpha \in \Gamma_0 \setminus\{\gamma\}} m_\alpha 
		(\alpha-\gamma) \qquad \text{for} \qquad \max\left(\sum_{m_\alpha>0} 
		m_\alpha, -\sum_{m_\alpha<0} m_\alpha\right) \leq n.\]
		It is evident that no other choice of $f$ supported on $\Gamma_0$ can 
		give more frequencies. Returning to \eqref{eq:shapiro}, we see that the 
		only possible $\lambda$ such that the integral is non-zero are those in 
		$E_n(\Gamma)$. The claim now follows from Lemma~\ref{lem:shapiro}. 
	\end{proof}
	
	By Theorem~\ref{thm:shapirogeometric}, our task is now to clarify under 
	which conditions on a subset $\Gamma$ of $\mathbb N_0^d$ we will have 
	$E_n(\Gamma)=\Gamma$. To this end, the following terminology will be 
	useful. 
	\begin{definition}
		Let $T$ be a subset of $\mathbb N^d_0$. Define inductively 
		$E_{n}^{k+1}(T):=E_n(E_n^k(T))$ for all positive integers $k$ and set
		\[E_n^\infty(T):=\bigcup_{k=1}^\infty E^k_n(T).\]
		We will refer to the set $E_n^{\infty}(T)$ as the \emph{$n$-completion 
			of} $T$. 
	\end{definition}
	
	Clearly, $E_n^{\infty}(T)$ is the smallest set $\Gamma$ satisfying 
	$T\subseteq \Gamma$ and $E_n(\Gamma)=\Gamma$. We close this subsection by 
	recording two immediate consequences, both pertaining to the simplest case 
	of $1$-completions. The first of these settles the essentially trivial case 
	$k=1$ in part (a) of Theorem~\ref{thm:TdprojHp}. 
	\begin{lemma}\label{lem:k=1} 
		Let $T$ be a subset of $\mathbb N_0^d$ with $\dim (T)=1$. Then the 
		$1$-completion of $T$ is $ \Lambda (T) \cap \mathbb N_0^d$. 
	\end{lemma}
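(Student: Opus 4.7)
The plan is to reduce the lemma to a one-dimensional combinatorial statement by parameterizing the affine line carrying $T$. Since $\dim(T) = 1$, I would fix $\gamma_0 \in T$ and a primitive vector $v \in \mathbb{Z}^d$ such that $T \subseteq \gamma_0 + \mathbb{Z} v$; after absorbing the gcd of the integer coefficients of $T - \gamma_0$ along $v$ into $v$ itself, the bijection $k \mapsto \gamma_0 + kv$ identifies $\Lambda(T)$ with $\mathbb{Z}$, identifies $T$ with a set $A \subseteq \mathbb{Z}$ satisfying $\gcd(A - A) = 1$, and identifies $\Lambda(T) \cap \mathbb{N}_0^d$ with $K = \{k \in \mathbb{Z} : (\gamma_0)_j + kv_j \geq 0 \text{ for all } j\}$, which is a discrete interval in $\mathbb{Z}$ because it is the intersection of $\mathbb{Z}$ with finitely many half-lines. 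After translating along the line I may assume $0 \in A$, and after negating $v$ if needed I may assume $A$ contains a positive integer. A quick case analysis of the definition of $d(\Gamma, \lambda)$ shows that in one dimension every $1$-extension is either a linear reflection $2\gamma - \alpha = \gamma + \gamma - \alpha$ or a triangular reflection $\gamma + \alpha - \beta$, both of the form $a + b - c$; hence in these coordinates $E_1$ becomes $E(S) := (S + S - S) \cap K$, and the task is to show $E^{\infty}(A) = K$.

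The decisive observation is that $0 \in A \subseteq E^{\infty}(A)$ makes $E^{\infty}(A)$ closed under both addition and subtraction within $K$: for $x, y \in E^{\infty}(A)$, the identities $x + y = x + y - 0$ and $x - y = 0 + x - y$ place $x \pm y$ in $E(E^{\infty}(A)) \subseteq E^{\infty}(A)$ whenever $x \pm y$ lies in $K$. With this closure in hand, I would first show $1 \in E^{\infty}(A)$ by a Euclidean-algorithm-like reduction: for positive elements $a \geq b$ of $E^{\infty}(A)$, the subtraction $a - b$ lands in $[0, \max A] \subseteq [0, \max K] \subseteq K$, so $a - b \in E^{\infty}(A)$; iterating this reduces any pair to its gcd without leaving $K$, and feeding in one element of $A$ at a time eventually drives the gcd of the current set down to $\gcd(A) = 1$. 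Once $\{0, 1\} \subseteq E^{\infty}(A)$, a cascade of linear reflections $m \mapsto 2m - (m - 1) = m + 1$ upward and $m \mapsto 2m - (m + 1) = m - 1$ downward (seeded below zero by $-1 = 0 + 0 - 1$, valid whenever $-1 \in K$) propagates one unit step at a time and fills the entire discrete interval $K$.

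I expect the principal obstacle to be the Euclidean-reduction step and the bookkeeping required to keep all intermediate quantities inside $K$. This is manageable because the interval structure of $K$ guarantees that quantities produced by Euclidean subtraction stay sandwiched between $\min A$ and $\max A$, both of which lie in $K$; this is precisely what is needed to realize $\gcd(A) = 1$ inside $E^{\infty}(A)$. After that, the propagation from $\{0, 1\}$ to every point of $K$ via linear reflections is mechanical, since each step moves by $\pm 1$ and therefore cannot overshoot the boundary of an interval.
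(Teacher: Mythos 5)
Your proof is correct, and it is essentially the paper's argument in coordinatized form: both exploit that $\Lambda(T)\cap\mathbb{N}_0^d$ is a discrete interval on the line and run a Euclidean algorithm there, the paper phrasing it extremally (a minimal-length difference vector in $E_1^\infty(T)$ must divide every other difference, else division with remainder produces a shorter one) where you phrase it constructively (gcd reduction down to the unit step, then reflection cascade). The only point to tighten is the reduction step, which as written only subtracts positive elements; one also needs to fold in negative elements of $A$, but your observation that all intermediates stay sandwiched in $[\min A,\max A]\subseteq K$ already supplies what is needed.
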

	\begin{proof}
		The assertion is trivial if $T$ consists of only two points, so suppose 
		that there are at least three points in $T$. Choose two distinct points 
		$\alpha$ and $\beta$ in $E_1^\infty(T)$ subject to condition that the 
		vector $\alpha-\beta$ have minimal length. By assumption, there is at 
		least one more point $\eta$ in $E_1^\infty(T)$. Now it is clear that 
		$\eta=\alpha+k(\beta-\alpha)$ for some integer $k$ since otherwise we 
		could find a point $\tau$ in $E_1^{\infty}(\{\alpha, \beta\})$ so that 
		the length of $\eta-\tau$ is positive and strictly smaller than that of 
		$\alpha-\beta$. 
	\end{proof}
	
	The next lemma will be useful for the analysis of our examples in 
	Section~\ref{subsec:cex}. It will also be instrumental in 
	Section~\ref{subsec:proofb}, for the computation of $E_1^\infty(T)$ and 
	$E_2^\infty(T)$ for subsets $T$ of codimension $0$ in respectively $\mathbb 
	N^2_0$ and $\mathbb N^3_0$.
	\begin{lemma}\label{lem:pluscompletion} 
		Let $T$ be a subset of $\mathbb N^d_0$. If there are points $\alpha$ 
		and $\beta$ in $E_n^\infty(T)$ such that $\beta-\alpha$ is in $\mathbb 
		N^d$, then $E_n^\infty(T) = E_1^\infty(T\cup \{\alpha, \beta\}) = 
		\Lambda(T)\cap \mathbb N^d_0$. 
	\end{lemma}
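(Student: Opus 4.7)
My plan is to prove the double equality by establishing the chain
$$E_1^\infty(T\cup\{\alpha,\beta\}) \;\subseteq\; E_n^\infty(T) \;\subseteq\; \Lambda(T)\cap\mathbb{N}_0^d \;\subseteq\; E_1^\infty(T\cup\{\alpha,\beta\}).$$
The first two inclusions are essentially formal: since $E_1(S)\subseteq E_n(S)$ for any $S$ and $E_n$ is monotone (a larger base set can only shrink distances), the hypothesis $\alpha,\beta\in E_n^\infty(T)$ gives $T\cup\{\alpha,\beta\}\subseteq E_n^\infty(T)$ and hence the first inclusion; the second is the defining property of the affine span. All the content is in the last inclusion, which I would handle in two steps.

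Set $v:=\beta-\alpha\in\mathbb{N}^d$. The first step is to show that $E_1^\infty(T\cup\{\alpha,\beta\})$ is closed under the translation $\gamma\mapsto\gamma+v$. When $\gamma\notin\{\alpha,\beta\}$, this is the single 1-extension $\gamma+v=\gamma+(\beta-\gamma)-(\alpha-\gamma)$ from the base $\gamma$, whose distance is $\max(1,1)=1$ and which lies in $\mathbb{N}_0^d$ since $\gamma\in\mathbb{N}_0^d$ and $v\in\mathbb{N}^d$. The degenerate cases $\gamma=\alpha$ and $\gamma=\beta$ give $\beta$ itself and the linear reflection $2\beta-\alpha$, respectively. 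Iterating, $\gamma+jv$ lies in $E_1^\infty(T\cup\{\alpha,\beta\})$ for every $\gamma$ in that set and every $j\geq 0$.

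For the second step, take an arbitrary $\lambda\in\Lambda(T)\cap\mathbb{N}_0^d$ and write $\lambda-\gamma_0=\sum_{j=1}^N\sigma_j(t_j-\gamma_0)$ as a sum of unit increments, with $\gamma_0,t_j\in T$ and $\sigma_j\in\{\pm1\}$. For the partial sums $\mu_j:=\gamma_0+\sum_{i=1}^j\sigma_i(t_i-\gamma_0)$, I pick $M$ so large that every $\mu_j+Mv$ lies in $\mathbb{N}_0^d$; this is possible because $v$ has strictly positive entries. Starting from $\mu_0+Mv=\gamma_0+Mv$, which is in $E_1^\infty(T\cup\{\alpha,\beta\})$ by the closure property of the first step, the inductive step from $\mu_j+Mv$ to $\mu_{j+1}+Mv=(\mu_j+Mv)+\sigma_{j+1}(t_{j+1}-\gamma_0)$ is a triangular-reflection 1-extension of the triple $\{\mu_j+Mv,\,t_{j+1},\,\gamma_0\}$, with the coincidences of two base points collapsing to a linear reflection or a tautology. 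Hence $\lambda+Mv\in E_1^\infty(T\cup\{\alpha,\beta\})$.

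Finally, another application of the closure property of the first step also places $\lambda+(M+1)v$ in $E_1^\infty(T\cup\{\alpha,\beta\})$, after which the repeated linear reflections $\lambda+(j-1)v=2(\lambda+jv)-(\lambda+(j+1)v)$ for $j=M,M-1,\ldots,1$ remain in $\mathbb{N}_0^d$ (because $\lambda\in\mathbb{N}_0^d$ and $v\geq 0$) and walk us down to $\lambda$ itself. The main delicate point is the inductive construction producing $\lambda+Mv$: one must verify that the degenerate coincidences among the three base points in each triangular reflection always collapse to valid 1-extensions, and that a single $M$ can be chosen uniformly in $j$ so that the unit-increment path stays entirely in $\mathbb{N}_0^d$ after the shift. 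These are bookkeeping checks rather than genuine obstacles, and the strict positivity of $v$ is what makes the shift trick work.
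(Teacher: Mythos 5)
Your proof is correct and follows essentially the same route as the paper: shift the target point up by a large multiple of $v=\beta-\alpha$ so that the entire unit-increment path through the coset stays in $\mathbb{N}_0^d$, realize each increment as a triangular- or linear-reflection $1$-extension, and then descend back to $\lambda$ by reflections (the paper leaves this last descent implicit). The only difference is cosmetic — the paper bases the representation at $\alpha$ rather than at a point $\gamma_0\in T$ — so no further comment is needed.
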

	\begin{proof}
		Let $\tau$ be any point in $\Lambda(T)\cap \mathbb N^d_0$. Then there 
		exist a positive integer $k$ and (not necessarily distinct) points 
		$\gamma_1,\ldots, \gamma_k$ in $T$, with a choice of signs 
		$\varepsilon_j$ such that
		\[\tau=\alpha+\sum_{j=1}^k \varepsilon_j (\gamma_j-\alpha).\]
		Now let $r$ be a positive integer which is so large that for $\eta:= 
		\alpha + r(\beta-\alpha)$, we have that $\eta+\sum_{j=1}^l 
		\varepsilon_j (\gamma_j-\alpha)$ lie in $\mathbb N_0^d$ for $l=1, 
		\ldots, k$. This implies that $\tau+r(\beta-\alpha)$ is in 
		$E_1^\infty(T\cup \{\alpha, \beta\})$, which in turn means that also 
		$\tau$ is in $E_1^\infty(T\cup \{\alpha, \beta\})$. 
	\end{proof}
	
	\subsection{Examples} \label{subsec:cex} Our goal is now to compile a 
	collection of examples which, in view of 
	Theorem~\ref{thm:shapirogeometric}, collectively demonstrate the necessity 
	part of points (b) and (c) of Theorem~\ref{thm:TdprojHp} in the case when 
	$p$ is an even integer. Two of the examples will also be used in the proof 
	of Theorem~\ref{thm:curlyop}. After each example, we will elucidate 
	explicitly its usage in the proof of Theorem~\ref{thm:TdprojHp}.
	
	We will make use of the following equivalent representation of the 
	$n$-extensions, which can be deduced from Lemma~\ref{lem:shapiro} similarly 
	to how we proved Theorem~\ref{thm:shapirogeometric}. Suppose that $\Gamma = 
	\{\gamma_1,\gamma_2,\ldots,\gamma_k\}$. A point $\lambda$ is in 
	$E_n(\Gamma)$ if and only if there are functions $\tau_+ \colon 
	\{1,\ldots,n+1\} \to \{\gamma_1,\ldots,\gamma_k\}$ and $\tau_- \colon 
	\{1,\ldots,n\} \to \{\gamma_1,\ldots,\gamma_k\}$ such that 
	\begin{equation}\label{eq:reform} 
		\lambda = \sum_{m=1}^{n+1} \gamma_{\tau_{+}(m)} - \sum_{m=1}^n 
		\gamma_{\tau_{-}(m)}. 
	\end{equation}
	The formulation \eqref{eq:reform} is particularly useful for checking if a 
	given $\lambda$ is in $E_n(\Gamma)$.
	
	The following example is presented graphically in Figure~\ref{fig:nocorner}.
	\begin{example}\label{ex:d3} 
		Consider $\Gamma := \{(3,0,0),(0,3,0),(1,1,1)\}$. We may represent 
		every $\lambda$ in $\Lambda(\Gamma)$ as 
		\begin{equation}\label{eq:d3} 
			\lambda = (1,1,1) + j (2,-1,-1) + k(-1,2,-1) 
		\end{equation}
		for integers $j$ and $k$. We are only interested in $\lambda$ that lie 
		in $\mathbb{N}_0^3$. We see that this can only be achieved if $j+k \leq 
		1$ by inspecting the third coordinate of \eqref{eq:d3} and $j,k\geq -1$ 
		by inspecting the first and second coordinates of \eqref{eq:d3}. 
		The only choice of $j$ and $k$ that provides a new point in 
		$\mathbb{N}_0^3$, is $j=k=-1$ which gives $\lambda = (0,0,3)$. Hence we 
		conclude that
		\[\Lambda(\Gamma) \cap \mathbb{N}_0^3 = \Gamma \cup \{(0,0,3)\}.\]
		Returning to \eqref{eq:d3} with $j=k=-1$, we see that $(0,0,3)$ is in 
		$E_2(\Gamma)$, which shows that 
		$E_2(\Gamma)=\Lambda(\Gamma)\cap\mathbb{N}_0^3$. It remains to show 
		that $E_1(\Gamma)=\Gamma$. In view of the reformulation 
		\eqref{eq:reform} and the discussion above, this is equivalent to 
		showing that the equation
		\[(0,0,3) = \gamma_1 + \gamma_2 - \gamma_3\]
		does not have a solution for (not necessarily distinct) 
		$\gamma_1,\gamma_2,\gamma_3$ in $\Gamma$. To see this, it is sufficient 
		to note that the third coordinate of $\gamma_1 + \gamma_2 - \gamma_3$ 
		is at most $2$. 
	\end{example}
	
	Example~\ref{ex:d3} extends trivially to an example for $d\geq 3$ if we 
	retain the first three entries as above and set the $j$th entry to $0$ for 
	$3\leq j \leq d$. This means that this example yields the necessity of the 
	case $k=2$ in part (b) of Theorem~\ref{thm:TdprojHp}.
	\begin{example}\label{ex:d3k3} 
		Consider $\Gamma := \{(4,0,0),(0,4,0),(0,0,4),(1,1,1)\}$. It is clear 
		that the only way to get $\gamma_1+\gamma_2-\gamma_3$ in 
		$\mathbb{N}_0^3$ for (not necessarily distinct) 
		$\gamma_1,\gamma_2,\gamma_3$ in $\Gamma$ is to set $\gamma_1=\gamma_3$ 
		or $\gamma_2=\gamma_3$. Hence $E_1(\Gamma)=\Gamma$. However, since
		\[(4,0,0)+\big((0,4,0)-(1,1,1)\big)+\big((0,0,4)-(1,1,1)\big)=(2,2,2)\]
		we conclude that $(2,2,2)$ is in $E_2(\Gamma)$. Since $(2,2,2)-(1,1,1)$ 
		is in $\mathbb{N}^3$, we get from Lemma~\ref{lem:pluscompletion} that 
		$E_2^\infty(\Gamma)=\Lambda(\Gamma)\cap\mathbb{N}_0^3$. 
	\end{example}
	
	We see that Example~\ref{ex:d3k3} settles the necessity of the case $d=k=3$ 
	in part (b) of Theorem~\ref{thm:TdprojHp}.
	\begin{example}\label{ex:d4} 
		Fix an integer $n\geq2$ and consider
		\[\Gamma_n := 
		\left\{(n,1,0,1),(n+1,0,1,0),(0,0,n+1,0),(0,0,0,n+1)\right\}.\]
		The generating vectors for the coset $\Lambda(\Gamma_n)$ with respect 
		to $\alpha :=(n,1,0,1)$ are 
		\begin{align*}
			v_1 &:= (1,-1,1,-1), \\
			v_2 &:= (-n,-1,n+1,-1), \\
			v_3 &:= (-n,-1,0,n). 
		\end{align*}
		Hence, every $\lambda$ in $\Lambda(\Gamma_n)$ may be represented as 
		\begin{equation}\label{eq:d4eq1} 
			\lambda = \alpha + j_1 v_1 + j_2 v_2 + j_3 v_3 
		\end{equation}
		for integers $j_1,j_2,j_3$. We want to check whether there are 
		$\lambda$ in $\mathbb{N}_0^4$ that satisfy the equation 
		\eqref{eq:d4eq1}. This means that we require 
		\begin{align}
			n + j_1-nj_2-nj_3 & \geq 0, \label{eq:d411} \\
			1 - j_1-j_2-j_3 & \geq 0, \label{eq:d412} \\
			j_1 + (n+1) j_2 & \geq 0, \label{eq:d413} \\
			1 - j_1 - j_2 +n j_3 & \geq 0. 
			\label{eq:d414} \end{align}
		We divide our analysis into four cases. 
		
		\subsubsection*{Case 1} Suppose that $j_1=1$. By \eqref{eq:d413}, we 
		get $j_2\geq0$. Rewriting \eqref{eq:d412} as $j_3 \leq - j_2$ and 
		inserting this into \eqref{eq:d414}, we find the necessary condition 
		$-(n+1)j_2 \geq0$. Hence $j_2 \leq 0$ and so $j_2=0$. Returning to 
		\eqref{eq:d412} and \eqref{eq:d414} we find that $j_3=0$. We get
		\[\lambda = \alpha + v_1 = (n,1,0,1).\]
		
		\subsubsection*{Case 2} Suppose that $j_1=0$. By \eqref{eq:d413}, we 
		get $j_2\geq0$. Rewriting \eqref{eq:d412} as $j_3 \leq 1-j_2$ and 
		inserting this into \eqref{eq:d414}, we find the necessary condition 
		$(1-j_2)(n+1)\geq0$. Hence $j_2 \leq 1$ and so either $j_2=0$ or 
		$j_2=1$. Returning to \eqref{eq:d412} and \eqref{eq:d414}, we see at 
		once that if $j_2=0$, then $j_3=0,1$ and if $j_2=1$, then $j_3=0$. The 
		three cases give 
		\begin{align*}
			\lambda &= \alpha = (n+1,0,1,0), \\
			\lambda &= \alpha+v_2 = (0,1,n,0), \\
			\lambda &= \alpha+v_3=(0,0,0,n+1). 
		\end{align*}
		
		\subsubsection*{Case 3} Suppose that $j_1>1$. Rewriting \eqref{eq:d412} 
		as $j_3 \leq 1 - j_1 - j_2$ and inserting this into \eqref{eq:d414}, we 
		obtain the necessary condition
		\[0\leq (1-j_1-j_2)(n+1),\]
		which means that $1-j_2 \geq j_1$. Since $j_1>1$ we conclude that 
		$j_2<0$. From \eqref{eq:d413} we see that $j_1 \geq - (n+1) j_2$. Hence 
		we need $j_2<0$ to satisfy
		\[1-j_2 \geq -(n+1)j_2,\]
		which is impossible since $n\geq2$.
		
		\subsubsection*{Case 4} Suppose that $j_1<0$. By \eqref{eq:d413}, we 
		find that $j_2 \geq 1$. Inserting this into \eqref{eq:d411} and 
		\eqref{eq:d414}, we find that 
		\begin{align*}
			0 &\leq j_1 - nj_3, \\
			0 &\leq n j_3 -j_1, 
		\end{align*}
		whence $j_1 = n j_3$. Returning to \eqref{eq:d411}, we find that 
		$n(1-j_2)\geq0$ which means that $j_2 \leq 1$ and hence $j_2=1$. 
		Returning to \eqref{eq:d413}, we see that $j_1 + n+1 \geq 0$ and since 
		$j_1$ is a strictly negative multiple of $n$, we must have $j_1=-n$ and 
		hence $j_3=-1$. This gives the solution
		\[\lambda = \alpha - n v_1 + v_2 - v_3 = (0,n+1,1,0).\]
		Note that here we have used an $(n+1)$-extension, since $j_1+j_2=n+1$ 
		and $j_3=-1$.
		
		\subsubsection*{Final part} We have demonstrated that
		\[\Lambda(\Gamma_n)\cap \mathbb{N}_0^3 = \Gamma_n \cup \{(0,n+1,1,0)\} 
		= E_{n+1}(\Gamma_n).\]
		It remains to establish that $E_n(\Gamma_n) = \Gamma_n$. We want to 
		prove that it is impossible to write $\lambda=(0,n+1,1,0)$ in the 
		representation \eqref{eq:reform}. We begin by looking at the more 
		general equation
		\[(0,n+1,1,0)= k_1(n,1,0,1)+ 
		k_2(n+1,0,1,0)+k_3(0,0,n+1,0)+k_4(0,0,0,n+1)\]
		for arbitrary integers $k_1,k_2,k_3,k_4$. The second coordinate shows 
		that $k_1=n+1$. In the first coordinate this gives that $k_2=-n$. In 
		the third coordinate we find that $k_3=1$ and in the fourth coordinate 
		we find that $k_4=-1$. Hence the only solution is $k_1=n+1$, $k_2=-n$, 
		$k_3=1$ and $k_4=-1$. However, this is not of the representation 
		\eqref{eq:reform} since $k_1+k_3=n+2>n+1$. Hence $(0,n+1,1,0)$ is not 
		in $E_n(\Gamma_n)$, which shows that $E_n(\Gamma_n)=\Gamma_n$. 
	\end{example}
	
	Example~\ref{ex:d4} extends trivially to an example for $d\geq 5$ if we 
	keep the four first entries as above and set the $j$th entry to $0$ for 
	$5\leq j \leq d$. Hence this example yields the necessity of the case $k=3$ 
	in part (c) of Theorem~\ref{thm:TdprojHp}.
	\begin{example}\label{ex:d4k4} 
		Fix an integer $n\geq3$ and consider
		\[\Gamma_n := \left\{(n,1,0,1), (n+1,0,1,0), (0,0,n+1,0), (0,0,0,n+1), 
		(0,n+1,0,0)\right\}.\]
		The first four points in $\Gamma_n$ are the same as in 
		Example~\ref{ex:d4}, so we know that $(0,n+1,1,0)$ is in 
		$E_{n+1}(\Gamma_n)$. Using this point and the fifth point in $\Gamma_n$ 
		to perform $n+2$ successive $1$-extensions we conclude that
		\[(n,1,0,1)+(n+2)\big((0,n+1,1,0)-(0,n+1,0,0)\big)=(n,1,n+2,1)\]
		is in $E_{n+1}^\infty(\Gamma_n)$. Since 
		$(n,1,n+2,1)-(0,0,n+1,0)=(n,1,1,1)$ is in $\mathbb{N}^4$ we can appeal 
		to Lemma~\ref{lem:pluscompletion} to conclude that 
		$E_{n+1}^\infty(\Gamma_n)=\Lambda(\Gamma_n)\cap\mathbb{N}_0^4$. 
		
		To investigate $E_n(\Gamma_n)\setminus \Gamma_n$ we look at points in 
		$\mathbb{N}_0^d$ which may be represented as
		\[k_1(n,1,0,1)+ k_2(n+1,0,1,0)+ k_3(0,0,n+1,0)+ k_4(0,0,0,n+1)+ 
		k_5(0,n+1,0,0)\]
		where the integers $k_1,k_2,k_3,k_4,k_5$ must be chosen in accordance 
		with \eqref{eq:reform}. In particular, $-n \leq k_1,k_2,k_3,k_4,k_5 
		\leq n+1$ and $k_1+k_2+k_3+k_4+k_5=1$. By the analysis in 
		Example~\ref{ex:d4} we may restrict our attention to the case that 
		$k_5\neq 0$. 
		
		If $k_5\geq1$, then $k_1,k_2,k_3,k_4 \leq n$ which implies that 
		$k_3,k_4\geq0$ and $k_1+k_2\leq0$. Looking at the first coordinate we 
		get the condition
		\[k_1 n + k_2(n+1) = (k_1 + k_2)n+k_2\geq 0.\]
		By the requirements above, this is only possible if $k_1=-k_2$ and 
		$k_2\geq0$. Since now $k_1+k_2=0$ we get that $k_3=k_4=0$. The fourth 
		coordinate is currently equal to $-k_1$, which means that $k_1=0$ and 
		hence $k_2=0$. We get $(0,n+1,0,0)$ which is already in $\Gamma_n$. 
		
		If $k_5 \leq -1$, the second coordinate shows that $k_5=-1$ and 
		$k_1=n+1$. We now get from \eqref{eq:reform} that $k_2,k_3,k_4 \leq 0$ 
		and $k_2+k_3+k_4=-(n-1)<0$. By looking at the third coordinate, we find 
		that $k_2=k_3=0$. Hence $k_4=-(n-1)$, so the fourth coordinate is
		\[n+1-(n-1)(n+1)=2+n-n^2<0\]
		since $n\geq3$. Hence $E_n(\Gamma_n)=\Gamma_n$. 
	\end{example}
	
	When $d\geq 4$, Example~\ref{ex:d4k4} allows us to settle the necessity of 
	the case $4\leq k \leq d$ in part (c) of Theorem~\ref{thm:TdprojHp}. This 
	is immediate if $d=4$, and for $d\geq 5$ we make the following trivial 
	extension. We retain the first four entries as above the points and put the 
	$j$th entry to $0$ for $5\leq j \leq d$, and then we extend the set by 
	adding $d-k$ affinely independent points with only zeros in the first $4$ 
	entries.
	
	\subsection{Two-dimensional subsets of 
		\texorpdfstring{$\mathbb{N}^d_0$}{Nd0} for \texorpdfstring{$d\geq 
			3$}{d=>3}} \label{subsec:proofa}
	
	The purpose of this subsection is to settle the sufficiency of the case 
	$k=2$ in part (b) of Theorem~\ref{thm:TdprojHp}. In view of 
	Theorem~\ref{thm:shapirogeometric}, this will be furnished by 
	Lemma~\ref{lem:lem78} below. We begin with the following special case of 
	the required result.
	\begin{lemma}\label{lem:lem77} 
		Let $T$ be a set of three affinely independent points in 
		$\mathbb{N}^d_0$ for $d\geq 3$. Then the $2$-completion of $T$ is 
		$\Lambda(T)\cap \mathbb{N}_0^d$. 
	\end{lemma}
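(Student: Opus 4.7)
The goal is to establish $E_2^\infty(T) = \Lambda(T) \cap \mathbb{N}_0^d$. Since the inclusion $\subseteq$ is immediate from the definition, my plan is to verify the reverse inclusion by producing two points $\mu, \nu \in E_2^\infty(T)$ with $\mu - \nu \in \mathbb{N}^d$ and then invoking Lemma~\ref{lem:pluscompletion}. As a preliminary normalization, I would strip away every coordinate on which $\alpha, \beta, \gamma$ all vanish; any such coordinate is identically zero on $\Lambda(T)$ as well and can be ignored on both sides of the asserted identity. After this reduction, for each remaining coordinate $i$ at least one of $\alpha_i, \beta_i, \gamma_i$ is strictly positive. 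Setting $v := \beta - \alpha$ and $w := \gamma - \alpha$, the vector $\mu - \nu$ must lie in the two-dimensional integer lattice $\mathbb{Z} v + \mathbb{Z} w$ and be componentwise strictly positive.

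The analysis then splits along a geometric dichotomy. If $\operatorname{span}_{\mathbb{R}}(v, w)$ meets $(0, \infty)^d$, then clearing denominators furnishes a primitive $u \in \mathbb{Z} v + \mathbb{Z} w$ that lies in $\mathbb{N}^d$. Writing $u = a v + b w$ with $a, b \in \mathbb{Z}$, I would build a chain $\mu_0, \mu_1, \mu_2, \ldots$ in $E_2^\infty(T)$ with $\mu_{k+1} = \mu_k + u$ by realizing each $u$-shift through a $2$-extension presented in the combinatorial form \eqref{eq:reform}. The strict positivity of $u$ ensures that, once the index $k$ is large enough, the intermediate points stay safely inside $\mathbb{N}_0^d$, and two consecutive terms of the chain then supply the pair required by Lemma~\ref{lem:pluscompletion}.

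The complementary case $\operatorname{span}_{\mathbb{R}}(v, w) \cap (0, \infty)^d = \emptyset$ is handled by a separation argument producing a nonzero $y \geq 0$ with $y \cdot v = y \cdot w = 0$, so $\alpha, \beta, \gamma$ lie on a common affine hyperplane $y \cdot x = c$; the preceding reduction rules out $c = 0$. Accordingly, $\Lambda(T) \cap \mathbb{N}_0^d$ is bounded in every coordinate where $y_i > 0$. When $y$ is strictly positive the polygon is finite, and I would verify directly that each of its points arises as $\gamma_{i_1} + \gamma_{i_2} + \gamma_{i_3} - \gamma_{j_1} - \gamma_{j_2}$ in the form \eqref{eq:reform}. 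When $y$ has some zero entries the polygon is unbounded along the corresponding \emph{free} coordinates, and I would combine a direct verification on the bounded cross-section with a propagation argument along the free block, applying a partial analogue of Lemma~\ref{lem:pluscompletion} to the subset of coordinates where positivity can actually be achieved.

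I expect the obstructive case to be the main difficulty, since Lemma~\ref{lem:pluscompletion} cannot be applied on the nose and one must chain $2$-extensions while keeping every intermediate point inside the narrow cone $\mathbb{N}_0^d$. The role of the hypothesis $d \geq 3$ is to create enough room---either for a strictly positive vector in $\mathbb{Z} v + \mathbb{Z} w$ in the first case, or for a positive propagation direction supported on free coordinates in the second case---so that the required $2$-extensions actually fit inside $\mathbb{N}_0^d$.
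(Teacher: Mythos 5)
Your overall architecture has a genuine gap precisely where you yourself locate the difficulty. The strategy of producing $\mu,\nu\in E_2^\infty(T)$ with $\mu-\nu\in\mathbb{N}^d$ and invoking Lemma~\ref{lem:pluscompletion} is structurally incapable of covering the main case of this lemma, because for a two-dimensional $T$ the coset $\Lambda(T)$ very often lies in an affine hyperplane $y\cdot x=c$ with $y>0$, in which case \emph{no} two points of $\Lambda(T)$ differ by an element of $\mathbb{N}^d$ and Lemma~\ref{lem:pluscompletion} is vacuous. This is not a degenerate corner: the paper's motivating example $\Gamma=\{(3,0,0),(0,3,0),(1,1,1)\}$ (Figure~\ref{fig:nocorner}) lives in the plane $x+y+z=3$, so the entire lemma there reduces to your ``obstructive'' case. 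For that case you propose to ``verify directly that each of its points arises as $\gamma_{i_1}+\gamma_{i_2}+\gamma_{i_3}-\gamma_{j_1}-\gamma_{j_2}$,'' i.e.\ as a single $2$-extension of $T$. That claim is false in general: a single $2$-extension only reaches points $\lambda$ with $d(T,\lambda)\le 2$, whereas the triangle $\Lambda(T)\cap\mathbb{N}_0^d$ can contain lattice points of the coset at arbitrarily large distance from $T$ (take the three generators far apart and the sublattice fine). Reaching such a point requires an \emph{iterated} chain of $2$-extensions, and the whole difficulty is guaranteeing that every intermediate point of the chain stays inside $\mathbb{N}_0^d$. The paper's proof is devoted almost entirely to this: it normalizes so that $\alpha_3$ lies inside $\Delta(\beta,\alpha_1,\alpha_2)$ (so that all intermediate points can be forced to stay in that triangle, hence in the cone), writes $\beta=\alpha_2+m(\alpha_3-\alpha_1)-n(\alpha_2-\alpha_1)$ with $(m,n)=1$, and then runs a delicate step/level bookkeeping with a parity case analysis and an interval-covering argument to show the chain exists. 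Your proposal contains no substitute for any of this.

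Two smaller points. First, even in your favorable case, a shift by $u=av+bw$ with $|a|+|b|$ large is not a single $2$-extension in the sense of \eqref{eq:reform}, so ``realizing each $u$-shift through a $2$-extension'' already requires a chain whose intermediate points must be controlled; this is fixable but not automatic. Second, the mixed case ($y\ge 0$ with some zero entries) is dispatched by appeal to ``a partial analogue of Lemma~\ref{lem:pluscompletion}'' that is neither stated nor proved; since the bounded cross-section there poses the same reachability problem as the fully bounded case, this does not reduce the difficulty. The hypothesis $d\ge 3$ is also not playing the role you assign to it (creating a positive direction); in the paper it matters only insofar as the two-dimensional plane $P(T)$ sits inside a higher-dimensional cone, and the actual work is entirely planar.
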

	\begin{proof}
		Let $\alpha_1$, $\alpha_2$, $\alpha_3$ be the points in $T$, and let 
		$\beta$ be a point in $\Lambda(T)\cap \mathbb{N}_0^d$. We denote the 
		plane of which $T$ is a subset by $P(T)$. We let $\ell(\gamma,\tau)$ be 
		the line through the two points $\gamma$ and $\tau$, and we let 
		$\Delta(\gamma,\tau,\eta)$ be the triangle with corners $\gamma$, 
		$\tau$, $\eta$. Let $V$ and $W$ be the two components of $P(T)\setminus 
		\ell(\alpha_2,\beta)$. We may assume that the remaining two points 
		$\alpha_1$ and $\alpha_3$ lie in either $\overline{V}$ or 
		$\overline{W}$. We may also assume that $\alpha_3$ is contained in the 
		closed strip lying between $\ell(\alpha_2,\beta)$ and the line through 
		$\alpha_1$ parallel to $\ell(\alpha_2,\beta)$, since otherwise it could 
		be moved into this strip by a finite number of $1$-extensions. In fact, 
		we may assume that $\alpha_3$ lies in the interior of this strip, 
		because the problem has a trivial solution in terms of a finite number 
		of $1$-extensions should $\alpha_3$ lie on the boundary of the strip.
		
		Now if $\alpha_3$ lies in the parallelogram with corners $\alpha_1$, 
		$\alpha_2$, $\beta$, $\beta+\alpha_1-\alpha_2$, then it must lie inside 
		the triangle $\Delta(\beta,\alpha_1,\alpha_2)$, since otherwise $\beta$ 
		would not be contained in $\Lambda(T)$. If $\alpha_3$ does not lie in 
		this parallelogram, then we may replace $\alpha_3$ by 
		$\alpha_1+\alpha_2-\alpha_3$ which then must lie inside 
		$\Delta(\beta,\alpha_1,\alpha_2)$. We may therefore assume that 
		$\alpha_3$ lies inside $\Delta(\beta,\alpha_1,\alpha_2)$. 
		
		Let $\eta$ be the point at which $\ell(\alpha_1,\alpha_3)$ and 
		$\ell(\alpha_2,\beta)$ intersect. Since $\beta$ is in $\Lambda(T)$, the 
		distance from $\alpha_2$ to $\eta$ divides the distance from $\alpha_2$ 
		to $\beta$, whence $\beta-\alpha_2=n(\eta-\alpha_2)$ for a positive 
		integer $n$. This means that 
		\begin{equation}\label{eq:ab} 
			\alpha_3-\alpha_1 =a(\alpha_2-\alpha_1)+b(\beta-\alpha_2), 
		\end{equation}
		where $a$ and $b$ are two positive rational numbers such that 
		$b=1/m\leq 1/n$ and $a=n/m$. We see that then 
		\begin{equation}\label{eq:y1} 
			\beta=\alpha_2+ m(\alpha_3- \alpha_1)- n(\alpha_2-\alpha_1). 
		\end{equation}
		We may assume that $(m,n)=1$ since otherwise we could replace $m$ and 
		$n$ by respectively $m/(m,n)$ and $n/(m,n)$. Figure~\ref{fig:figY} 
		illustrates the case when $m=5$ and $n=2$. We need to show that we can 
		get to $\beta$ starting from $T=\{\alpha_1,\alpha_2,\alpha_3\}$ and 
		using $2$-extensions. Reformulating \eqref{eq:y1} to
		\[\beta= \alpha_3 + (m-1)(\alpha_3-\alpha_1) - (n-1)(\alpha_2 - 
		\alpha_1),\]
		we may exclude from our discussion the case when $m,n\leq 3$, since we 
		may evidently reach $\beta$ directly from $\alpha_3$ using a single 
		$2$-extension.
		
		\begin{figure}
			\centering
			\begin{tikzpicture}
				\begin{axis}[
					axis equal image,
					axis lines = none,
					xmin = -3,
					xmax = 13,
					ymin = -3,
					ymax = 22]
					
					\addplot[thin, name path=t1, draw opacity=0] coordinates 
					{(0,0) (10,20)};
					\addplot[thin, name path=b1, draw opacity=0] coordinates 
					{(0,0) (10,0)};
					\addplot[color=black, fill=black, fill opacity=0.25] fill 
					between[of=t1 and b1];
					
					\addplot[solid] coordinates {(0,0) (10,0)};
					\addplot[solid] coordinates {(0,0) (8,8)};
					\addplot[solid] coordinates {(20,0) (16,19)};
					
					\addplot[densely dashed, blue] coordinates {(10,20) (6,16)};
					\addplot[densely dashed, blue] coordinates {(8,8) (6,16)};
					\addplot[densely dashed] coordinates {(-2,8) (8,8)};
					\addplot[densely dashed] coordinates {(-2,8) (6,16)};
					
					\node[circle, draw, fill=black, scale=0.5] at (0,0){};
					\node[] at (0,-2){$\alpha_1$};
					\node[circle, draw, fill=black, scale=0.5] at (10,0){};
					\node[] at (10,-2){$\alpha_2$};
					\node[circle, draw, fill=black, scale=0.5] at (4,4){};
					\node[] at (5.4142,2.5857){$\alpha_3$};
					
					\node[circle, draw, color=red, fill=red, scale=0.5] at 
					(10,20){};
					\node[] at (12,20){\color{red}$\beta$};
					\node[circle, draw, color=blue, fill=blue, scale=0.5] at 
					(8,8){};
					\node[] at (9.4142,6.5857){\color{blue}$\xi$};
					\node[circle, draw, scale=0.5] at (10,10){};
					\node[] at (12,10){$\eta$};
					
				\end{axis} 
			\end{tikzpicture}
			\caption{The case $m=5$ and $n=2$ in the proof of 
				Lemma~\ref{lem:lem77}. The shaded area represents a part of the 
				plane $P(T)$ that must lie inside the narrow cone $\mathbb 
				N_0^d$. We need a $2$-extension to reach ${\color{red}\beta}$ 
				which is accommodated by the move ${\color{red}\beta} =
				{\color{blue}\xi}-(\alpha_2-{\color{blue}\xi})-(\alpha_3-{\color 
					{blue}\xi} )$.}
			\label{fig:figY}
		\end{figure}
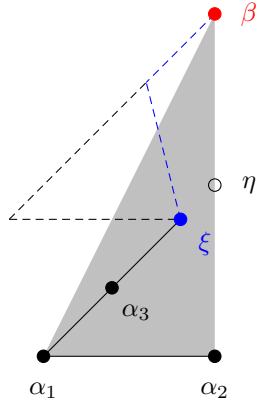
		
		Our plan is now to make successive extensions so that the point that is 
		added in each step, lies inside $\Delta(\beta,\alpha_1,\alpha_2)$. 
		Using \eqref{eq:ab}, we see that the condition that a point of the form 
		\begin{equation}\label{eq:ksi} 
			\xi=\alpha_2 + j(\alpha_3 - \alpha_1) - k(\alpha_2-\alpha_1) 
		\end{equation}
		for $j,k>0$ to be inside $\Delta(\beta,\alpha_1,\alpha_2)$ is that 
		$j/m\leq jn/m+1-k\leq 1$. This means more specifically that 
		\begin{equation}\label{eq:cond} 
			k=\left\lceil \frac{jn}{m}\right\rceil \qquad \text{and} \qquad 
			\left\{\frac{jn}{m}\right\}\geq \frac{j}{m}. 
		\end{equation}
		We will begin by identifying what will be the final extension required 
		to reach $\beta$. The basic idea is that it suffices with one final 
		extension once we have reached essentially half way from the base of 
		the triangle $\Delta(\alpha_1,\alpha_2,\beta)$ to the corner at 
		$\beta$. We make this precise by distinguishing between the following 
		three cases: 
		\subsubsection*{Case 1} If $m$ is an even number, then it is clear by 
		\eqref{eq:cond} that
		\[\xi=\alpha_2+ \frac{m}{2}(\alpha_3-\alpha_1) -\frac{(n+1)}{2}( 
		\alpha_2-\alpha_1)\]
		is in $\Delta(\beta,\alpha_1,\alpha_2)$. Assuming that $\xi$ is in 
		$E_2^k(T)$ for some $k$, we see that $\beta$ is in $E_2^{k+1}(T)$ by 
		recalling \eqref{eq:y1} and observing that
		\[\beta=\alpha_2+2(\xi-\alpha_2)-(\alpha_1-\alpha_2).\]
		\subsubsection*{Case 2} If $m$ and $n$ are both odd numbers, then
		\[\left\{\frac{(m+1)n}{2m}\right\} = 
		\left\{\frac{1}{2}+\frac{n}{2m}\right\}=\frac{(m+n)}{2m} \geq 
		\frac{(m+1)}{2m},\]
		whence
		\[\xi=\alpha_2+ \frac{(m+1)}{2}(\alpha_3-\alpha_1)-\frac{(n+1)}{2}( 
		\alpha_2-\alpha_1)\]
		is in $\Delta(\beta,\alpha_1,\alpha_2)$ by \eqref{eq:cond}. Assuming 
		again that $\xi$ is in $E_2^k(T)$ for some $k$, we now find that 
		$\beta$ is in $E_2^{k+1}(T)$ by recalling \eqref{eq:y1} and observing 
		that
		\[\beta=\alpha_2+2(\xi-\alpha_2)-(\alpha_3-\alpha_2).\]
		
		\subsubsection*{Case 3} The case when $m$ is an odd number and $n$ is 
		an even number requires a slightly more refined analysis. To begin with, 
		we observe that
		\[\left\{\frac{(m-1)n}{2m}\right\} = 
		\left\{-\frac{n}{2m}\right\}=\frac{(2m-n)}{2m} \geq \frac{(m-1)}{2m},\]
		whence
		\[\xi_1:=\alpha_2+\frac{(m-1)}{2}(\alpha_3-\alpha_1) 
		-\frac{n}{2}(\alpha_2-\alpha_1)\]
		is in $\Delta(\beta,\alpha_1,\alpha_2)$ by \eqref{eq:cond}. We use next 
		that
		\[\left\{\frac{(m \pm 3)n}{2m}\right\} = \left\{\frac{\pm 
			3n}{2m}\right\}.\]
		We observe that if $m/n>3/2$, then at least one of the two inequalities
		\[\left\{\frac{3n}{2m}\right\} \geq \frac{m+3}{2m} 
		\qquad\text{and}\qquad \left\{\frac{-3n}{2m}\right\} \geq 
		\frac{m-3}{2m}\]
		must hold. On the other hand, if $m/n<3/2$, then
		\[\left\{\frac{-3n}{2m} \right\} = \frac{4m-3n}{2m} \geq 
		\frac{m-3}{2m}.\]
		We conclude that at least one of the three points 
		\begin{align*}
			\xi_2 & := \alpha_2+\frac{(m+1)}{2} (\alpha_3-\alpha_1) - 
			\left(\frac{n}{2}+1\right)(\alpha_2-\alpha_1), \\
			\xi_3 & := \alpha_2+\frac{(m-3)}{2} (\alpha_3-\alpha_1) - 
			\frac{n}{2}(\alpha_2-\alpha_1), \\
			\xi_4 & := \alpha_2+\frac{(m-3)}{2} (\alpha_3-\alpha_1) - 
			\left(\frac{n}{2}-1\right)(\alpha_2-\alpha_1), 
		\end{align*}
		is in $\Delta(\beta,\alpha_1,\alpha_2)$ by \eqref{eq:cond}. Assuming 
		first that $\xi_1$ and $\xi_2$ are in $E_2^k(T)$ for some $k$, we see 
		that $\beta$ is in $E_2^{k+1}(T)$ because
		\[\beta=\alpha_2+(\xi_1-\alpha_2)+(\xi_2-\alpha_2)-( 
		\alpha_1-\alpha_2).\]
		Next, if $\xi_1$ and $\xi_3$ are in $E_2^k(T)$ for some $k$, then we 
		find again that $\beta$ is in $E_2^{k+1}(T)$ because
		\[\beta=\xi_1-(\alpha_2-\xi_1)-(\xi_3-\xi_1).\]
		Finally, if $\xi_1$ and $\xi_4$ are in $E_2^k(T)$ for some $k$, then we 
		find as before that $\beta$ is in $E_2^{k+1}(T)$, this time because
		\[\beta=\xi_1-(\alpha_1-\xi_1)-(\xi_4-\xi_1).\]
		
		\subsubsection*{Final part} We are now left with the simpler problem of 
		reaching each of the points considered above. We claim that any one of 
		them can be reached by starting from $\alpha_1$ or $\alpha_3$ and 
		making successive additions of multiples of the vectors 
		$\alpha_3-\alpha_1$ and $\alpha_1-\alpha_2$. We will refer to the 
		integers $j$ and $k$ in \eqref{eq:ksi} as respectively steps and 
		levels. Notice that a step $j$ determines a unique point in the strip 
		between $\alpha_1-\alpha_2+\ell(\alpha_2,\beta)$ and 
		$\ell(\alpha_2,\beta)$, while there may in general be several points in 
		this strip at each level. Note that $\alpha_3$ is always step $1$. In 
		Figure~\ref{fig:figY}, the point $\xi$ corresponds for example to step 
		$2$ and the point $\beta$ is at level $2$.
		
		A simple geometric consideration suffices to settle the case $n<m/2$. 
		Indeed, then for every level $k\leq m/2$, there are points of the form 
		\eqref{eq:ksi} lying in $\Delta(\beta,\alpha_1,\alpha_2)$, and it is 
		clear that the points accumulated at the initial level $k=1$ can be 
		used to connect those lying at any level $k\leq m/2$ with those found at 
		the next level $k+1$. 
		
		The case $m/2<n<m$ requires a more careful analysis. It may be helpful 
		to bear in mind that the lead role will now be played by the steps $j$ 
		rather than the levels $k$. We begin by treating separately a special 
		case. Suppose that $m$ is odd and $n=(m+1)/2$. If $j$ is odd and $j<m$, 
		then
		\[\left\{\frac{jn}{m}\right\}=\frac{1}{2}+\frac{j}{2m} >\frac{j}{m},\]
		which means that each of the points $\xi$ in \eqref{eq:ksi} with $j$ 
		odd and $j<m$ will be in $\Delta(\beta,\alpha_1,\alpha_2)$. These points 
		are reached in an obvious way, once we have made the initial extension
		\[\xi:=\alpha_3-(\alpha_1-\alpha_3)-(\alpha_2-\alpha_3).\]
		
		We now write $n=m-r$ and assume that $1\leq r \leq m/2-1$. The 
		condition that the point $\xi$ in \eqref{eq:ksi} be in 
		$\Delta(\beta,\alpha_1,\alpha_2)$ is that
		\[\left\{\frac{jn}{m} \right\}=\left\{\frac{-jr}{m} \right\}\geq 
		\frac{j}{m}.\]
		This means that we must have 
		\begin{equation}\label{eq:interval} 
			\frac{(t-1)m}{r}<j\leq \frac{t m}{r+1} 
		\end{equation}
		for some $t$ such that $1 \leq t \leq r/2+1$, where the latter 
		inequality should hold because we require that $j\leq m/2+1$. We now 
		observe that, under this restriction, the interval defined by 
		\eqref{eq:interval} has length
		\[m \left(\frac{t}{r+1} -\frac{(t-1)}{r}\right)=\frac{m(r+1-t)}{ 
			r(r+1)}\geq \frac{m}{2(r+1)}\geq 1,\]
		whence it contains at least one integer. This yields an algorithm for 
		reaching all steps $j$ with $j\leq m/2+1$ such that $\xi$ in 
		\eqref{eq:ksi} is in $\Delta(\beta, \alpha_1,\alpha_2)$. Indeed, 
		initially we go step-by-step until $j=[m/(r+1)]$. (Notice that this 
		suffices when $r=1$.) We then observe, denoting the interval defined by 
		\eqref{eq:interval} by $I_{t}$, that
		\[\operatorname{dist}(I_{t+1},I_{t})=\frac{t m}{r(r+1)} \leq 
		\frac{m}{2(r+1)}\]
		when $t \leq r/2 $. This means that the points corresponding to the 
		steps $j\leq [m/(r+1)]$ can be used to connect those associated with 
		steps lying in $I_{t}$ to those lying in $I_{t+1}$. 
	\end{proof}
	
	The general case can now be settled with a proof that requires less effort 
	than the preceding one.
	
	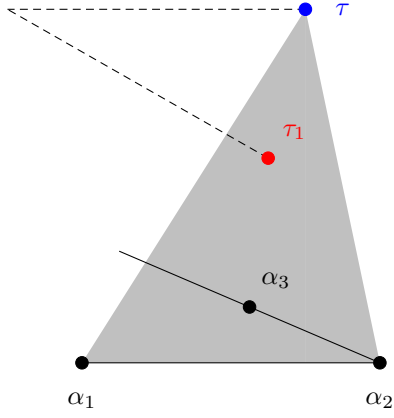
\begin{figure}
		\centering
		\begin{tikzpicture}
			\begin{axis}[
				axis equal image,
				axis lines = none,
				xmin = 0,
				xmax = 21,
				ymin = -3,
				ymax = 20]
				
				\addplot[thin, name path=t1, draw opacity=0] coordinates {(4,0) 
					(16,19)};
				\addplot[thin, name path=b1, draw opacity=0] coordinates {(4,0) 
					(16,0)};
				\addplot[color=black, fill=black, fill opacity=0.25] fill 
				between[of=t1 and b1];
				\addplot[thin, name path=t2, draw opacity=0] coordinates 
				{(16,19) (20,0)};
				\addplot[thin, name path=b2, draw opacity=0] coordinates 
				{(16,0) (20,0)};
				\addplot[color=black, fill=black, fill opacity=0.25] fill 
				between[of=t2 and b2];
				
				\addplot[solid] coordinates {(4,0) (20,0)};
				\addplot[solid] coordinates {(6,6) (20,0)};
				
				\addplot[densely dashed] coordinates {(14,11) (0,19)};
				\addplot[densely dashed] coordinates {(16,19) (0,19)};
				
				\node[circle, draw, fill=black, scale=0.5] at (4,0){};
				\node[] at (4,-2){$\alpha_1$};
				\node[circle, draw, fill=black, scale=0.5] at (20,0){};
				\node[] at (20,-2){$\alpha_2$};
				\node[circle, draw, fill=black, scale=0.5] at (13,3){};
				\node[] at (14.4142,4.4142){$\alpha_3$};
				
				\node[circle, draw, color=red, fill=red, scale=0.5] at 
				(14,11){};
				\node[] at (15.4142,12.4142){\color{red}$\tau_1$};
				\node[circle, draw, color=blue, fill=blue, scale=0.5] at 
				(16,19){};
				\node[] at (18,19){\color{blue}$\tau$};
				
			\end{axis} 
		\end{tikzpicture}
		\caption{The case $k=2$ and 
			${\color{red}\tau_1}={\color{blue}\tau}
			+(\alpha_1-\alpha_2)+k(\alpha_2-\alpha_3)$ in the proof of 
			Lemma~\ref{lem:lem78}. The shaded area represents a part of the 
			plane $P(T)$ that must lie inside the narrow cone $\mathbb{N}_0^d$.}
		\label{fig:figX}
	\end{figure}
	
	\begin{lemma}\label{lem:lem78} 
		Fix $d\geq3$ and let $T$ be a set in $\mathbb{N}^d_0$ with $\dim(T)=2$. 
		Then the $2$-completion of $T$ is $\Lambda(T)\cap \mathbb{N}_0^d$. 
	\end{lemma}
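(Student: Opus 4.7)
The plan is to bootstrap from Lemma~\ref{lem:lem77}, the three-point case, by iteratively forming new triples out of points already known to lie in $E_2^\infty(T)$.

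First, I would pick three affinely independent points $\alpha_1,\alpha_2,\alpha_3$ in $T$ and set $\Lambda_0:=\Lambda(\{\alpha_1,\alpha_2,\alpha_3\})$. Lemma~\ref{lem:lem77} then immediately gives $\Lambda_0\cap\mathbb{N}_0^d\subseteq E_2^\infty(T)$. If $\Lambda_0=\Lambda(T)$, the proof is complete.

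Otherwise there is some $\alpha\in T$ lying outside $\Lambda_0$, and I would then look for two companion points $\gamma_1,\gamma_2\in\Lambda_0\cap\mathbb{N}_0^d$ such that $\{\alpha,\gamma_1,\gamma_2\}$ is affinely independent and the lattice $\Lambda(\{\alpha,\gamma_1,\gamma_2\})$ strictly enlarges the reached sublattice. Since all three members of this new triple lie in $E_2^\infty(T)$, applying Lemma~\ref{lem:lem77} to it places the strictly larger lattice, intersected with $\mathbb{N}_0^d$, inside $E_2^\infty(T)$. I would then iterate the procedure, always working with the currently largest reached sublattice in the role of $\Lambda_0$. Since the direction sublattice of $\Lambda_0$ has finite index in that of $\Lambda(T)$ and this index strictly drops at every step, the process must terminate with $\Lambda(T)\cap\mathbb{N}_0^d\subseteq E_2^\infty(T)$, and the reverse inclusion is trivial.

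The main obstacle is the existence of the companion points $\gamma_1,\gamma_2$ at each iteration. This is the geometric and arithmetic content of the argument, and it is where the hypothesis $d\geq 3$ enters. Figure~\ref{fig:figX} illustrates the key move: from a reached point $\tau$ lying near a corner of the strip $P(T)\cap\mathbb{N}_0^d$, a single 2-extension of the form $\tau\mapsto\tau+(\alpha_1-\alpha_2)+2(\alpha_2-\alpha_3)$ reaches a new point $\tau_1$ in the same plane, even when no chain of 1-extensions inside $\mathbb{N}_0^d$ connects $\tau$ and $\tau_1$. Combining such 2-extensions with the abundance of lattice points in $\Lambda_0\cap\mathbb{N}_0^d$ that is afforded by the narrowness of the cone when $d\geq 3$ is what should furnish the required $\gamma_1,\gamma_2$ and close the argument.
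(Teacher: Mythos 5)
Your high-level skeleton is essentially the paper's: reduce to the three-point case (Lemma~\ref{lem:lem77}) and iterate with a strictly decreasing integer invariant (your lattice index is equivalent to the paper's invariant, the area of the triangle spanned by the current triple). But the proposal has a genuine gap: the entire substance of the proof is the step you defer, namely producing, from a point $\tau\in T$ outside the currently reached sublattice, a new affinely independent triple inside $E_2^\infty(T)$ whose lattice has strictly smaller index. You acknowledge this is ``the main obstacle'' and offer only a heuristic (``the abundance of lattice points \dots is what should furnish the required $\gamma_1,\gamma_2$''), whereas the paper spends the whole proof on it: one must show that $\tau$ can be driven, by moves that stay inside $\mathbb{N}_0^d$ at every intermediate stage, into the closed triangle $\Delta(\alpha_1,\alpha_2,\tau)$-region where a smaller nondegenerate triangle with two of the old vertices becomes available. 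The hard configuration is not a shortage of points of $\Lambda_0\cap\mathbb{N}_0^d$ (Lemma~\ref{lem:lem77} already gives all of them); it is when $\alpha_3$ lies \emph{inside} $\Delta(\alpha_1,\alpha_2,\tau)$, so that no single reflection of $\tau$ lands in the cone and one needs the iterated move $\tau_j\mapsto\tau_{j+1}=\tau_j+(\alpha_1-\alpha_2)+k(\alpha_2-\alpha_3)$ together with a proof that each intermediate point is admissible and that the process terminates. None of this is supplied.

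There is also a quieter problem with the bookkeeping. If you pick arbitrary $\gamma_1,\gamma_2\in\Lambda_0\cap\mathbb{N}_0^d$, the lattice $\Lambda(\{\alpha,\gamma_1,\gamma_2\})$ is generated by only the two vectors $\gamma_2-\gamma_1$ and $\alpha-\gamma_1$; it need not contain the direction lattice of $\Lambda_0$, and its index in that of $\Lambda(T)$ can be \emph{larger} than the index of $\Lambda_0$ (take $\gamma_2-\gamma_1$ a high multiple of a primitive vector). So ``strictly enlarges the reached sublattice'' is not automatic and requires an argument that a suitable pair $\gamma_1,\gamma_2$ both exists in $\mathbb{N}_0^d$ and generates, together with $\alpha$, a lattice of strictly smaller index. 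The paper sidesteps this by fixing two of the three old vertices and replacing only the third by a point $\beta$ constructed to lie inside $\Delta(\alpha_1,\alpha_2,\tau)$ off the line $\ell(\alpha_1,\alpha_2)$, which forces the new triangle to be nondegenerate with strictly smaller area. As written, your argument is a correct plan but not a proof.
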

	\begin{proof}
		Lemma~\ref{lem:lem77} proves the assertion in the special case when the 
		cardinality of $T$ is $3$. We will use this result to run what may be 
		thought of as a kind of Euclidean algorithm. We pick three arbitrary 
		affinely independent points $\alpha_1$, $\alpha_2$, $\alpha_3$ in $T$ 
		and assume that $\tau$ is a fourth point in $T$ such that $\tau$ is not 
		in $\Lambda(\{\alpha_1,\alpha_2,\alpha_3\})$. The crucial point will be 
		to prove that, on this assumption, there exists a point $\beta$ in 
		$E_2^{\infty}(\{\tau,\alpha_1,\alpha_2,\alpha_3\})$ such that at least 
		one of the triangles $\Delta(\alpha_1,\alpha_2,\beta)$, 
		$\Delta(\alpha_1,\alpha_3,\beta)$, $\Delta(\alpha_2,\alpha_3,\beta)$, 
		say $\Delta(\alpha_1,\alpha_2,\beta)$ for definiteness, is 
		nondegenerate with area strictly smaller than that of 
		$\Delta(\alpha_1,\alpha_2,\alpha_3)$. This argument may be iterated so 
		that in the next step we use $\alpha_1, \alpha_2, \beta$ in place of 
		$\alpha_1, \alpha_2, \alpha_3$. The iteration must terminate after a 
		finite number of steps, which means that eventually there are no points 
		in $T$ lying outside of the coset generated by the points $\alpha_1$, 
		$\alpha_2$, $\alpha_3$ used in this final step of the iteration.
		
		We are now in a situation similar to that considered in the preceding 
		lemma. We have a trivial solution if $\tau$ lies on 
		$\ell(\alpha_1,\alpha_2)$: Then the desired $\beta$ lies on the segment 
		$[\alpha_1,\alpha_2]$ and is reached by a finite number of 
		$1$-extensions. We therefore ignore this case in what follows. We may 
		assume that $\tau$ and $\alpha_3$ lie in the same component of the set 
		$P(T)\setminus \ell(\alpha_1,\alpha_2)$. 
		
		We now let $m$ be the smallest positive integer such that $\tau$ is 
		contained in the open strip between $\ell(\alpha_1,\alpha_2)$ and 
		$m(\alpha_3-\alpha_2)+\ell(\alpha_1,\alpha_2)$. If neither $\alpha_3$ 
		nor any of the points $\alpha_3\pm (\alpha_2-\alpha_1)$ lie in the 
		triangle $\Delta(\alpha_1,\alpha_2,\tau)$, then our problem has a 
		trivial solution: For $k=m-1$ and $i=1$ or $i=2$, the point 
		$\beta:=\tau-k(\alpha_3-\alpha_i)$ will lie in the closure of 
		$\Delta(\alpha_1,\alpha_2,\tau)$ and have distance to 
		$\ell(\alpha_1,\alpha_2)$ strictly smaller than that from $\alpha_3$ to 
		$\ell(\alpha_1,\alpha_2)$. (This distance may be $0$.) This point 
		$\beta$ will therefore have the desired property, unless it lies on 
		$\ell(\alpha_1,\alpha_2)$ in which case the solution is again trivial 
		as we saw above. 
		
		What remains to consider is the case when $\alpha_3$ lies inside 
		$\Delta(\alpha_1,\alpha_2,\tau)$. Let $k$ be the smallest positive 
		integer such that $\alpha_2+k(\alpha_3-\alpha_2)$ does not lie in the 
		closure of $\Delta(\alpha_1,\alpha_2,\tau)$. If $k=m$, then we see that 
		$\beta=\tau+(m-1)(\alpha_2-\alpha_3)$ solves our problem. If $k<m$, 
		then the point
		\[\tau_1:= \tau +(\alpha_1-\alpha_2) +k(\alpha_2-\alpha_3)\]
		is in $\Delta(\alpha_1,\alpha_2,\tau)\cap 
		E_2^\infty(\{\alpha_1,\alpha_2,\alpha_3,\tau\})$. See 
		Figure~\ref{fig:figX}. Now $\tau_1$ lies in the open strip between 
		$\ell(\alpha_1,\alpha_2)$ and 
		$(m-k)(\alpha_3-\alpha_2)+\ell(\alpha_1,\alpha_2)$ or on 
		$\ell(\alpha_1,\alpha_2)$. We may thus iterate the argument with 
		$\tau_1$ in place of $\tau$. It is clear that after a finite number of 
		such iterations, we will reach a point $\tau_j$ in 
		$E_2^\infty(\{\alpha_1,\alpha_2,\alpha_3,\tau\})$ such that the desired 
		$\beta$ can be reached in any of the trivial ways described in the 
		preceding discussion. 
	\end{proof}
	
	\subsection{Subsets of \texorpdfstring{$\mathbb{N}^2_0$}{N02} and 
		\texorpdfstring{$\mathbb{N}^3_0$}{N03} of codimension 
		\texorpdfstring{$0$}{0}} \label{subsec:proofb} It remains to establish 
	the case $d=k=2$ in part (a) and to finish the case $d=k=3$ in part (b) of 
	Theorem~\ref{thm:TdprojHp}. In either case, we will be dealing with sets of 
	codimension $0$ in the ambient space.
	
	We begin with the easiest case $d=k=2$. By 
	Theorem~\ref{thm:shapirogeometric}, we need to show that 
	$E_1^\infty(T)=\Lambda(T)\cap \mathbb N_0^2$ when $T$ is a subset of 
	$\mathbb N^2_0$ with $\dim (T)=2$. In view of 
	Lemma~\ref{lem:pluscompletion}, this is accomplished by means of the 
	following lemma.
	\begin{lemma}\label{lem:positive2} 
		Let $T$ be a set of three affinely independent points in 
		$\mathbb{N}_0^2$. Then for every $\alpha$ in $T$ there exists a point 
		$\beta$ in $E^{\infty}_1(T)\setminus\{x\}$ such that $\beta-\alpha$ is 
		in $\mathbb{N}^2$. 
	\end{lemma}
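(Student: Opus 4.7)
By relabeling, we fix $\alpha\in T$ and let $\alpha_1,\alpha_2$ denote the other two points. Set $v_i:=\alpha_i-\alpha$; these are linearly independent integer vectors in $\mathbb{Z}^2$, and we seek $\beta\in E_1^\infty(T)$ with $\beta-\alpha\in\mathbb{N}^2$. The proof proceeds by a case analysis on the signs of the coordinates of $v_1$ and $v_2$.

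\emph{Direct cases.} If some $v_i\in\mathbb{N}^2$, take $\beta=\alpha_i$. If some $-v_i\in\mathbb{N}^2$, the linear $1$-extension $\beta=2\alpha-\alpha_i$ works, since each coordinate equals $(\alpha)_j+(-v_i)_j>(\alpha)_j\geq 0$. More generally, whenever $\epsilon_1 v_1+\epsilon_2 v_2\in\mathbb{N}^2$ for some $\epsilon_j\in\{\pm 1\}$, the point $\beta:=\alpha+\epsilon_1 v_1+\epsilon_2 v_2$ lies in $E_1^\infty(T)\cap\mathbb{N}_0^2$: the three cases with at least one $\epsilon_j=+1$ are realized by a single triangular $1$-extension of $\{\alpha,\alpha_1,\alpha_2\}$, and the case $(\epsilon_1,\epsilon_2)=(-1,-1)$ by the composition $\beta=(2\alpha-\alpha_1)+(2\alpha-\alpha_2)-\alpha$. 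In the latter, the intermediate reflections $2\alpha-\alpha_i$ themselves lie in $\mathbb{N}_0^2$: if $(v_i)_j>0$, the hypothesis $(v_1+v_2)_j<0$ combined with $\alpha+v_{3-i}\in\mathbb{N}_0^2$ gives $(\alpha)_j\geq -(v_{3-i})_j>(v_i)_j$.

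\emph{Axis case.} If one of the vectors lies on a coordinate axis, say $v_1=(a,0)$ with $a>0$, then iterated linear reflections through $\alpha_1$ never change the second coordinate, so $\alpha+Nv_1\in E_1^\infty(T)$ for every $N\geq 0$. Linear independence forces $(v_2)_2\neq 0$; pick $\sigma\in\{\pm 1\}$ with $\sigma(v_2)_2>0$. For $N$ sufficiently large, the triangular $1$-extension $(\alpha+Nv_1)+\alpha_2-\alpha$ (if $\sigma=+1$) or $(\alpha+Nv_1)+\alpha-\alpha_2$ (if $\sigma=-1$) yields $\beta\in E_1^\infty(T)\cap\mathbb{N}_0^2$ with $\beta-\alpha=Nv_1+\sigma v_2\in\mathbb{N}^2$. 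The other three axial orientations of $v_1$ or $v_2$ are handled symmetrically.

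\emph{Remaining case.} Otherwise each $v_i$ has strictly nonzero coordinates of opposite signs, and no $\pm v_1\pm v_2$ lies in $\mathbb{N}^2$. By linear independence, the set $\{(p,q)\in\mathbb{Z}^2:pv_1+qv_2\in\mathbb{N}^2\}$ is nonempty (the corresponding open cone in $\mathbb{R}^2$ is two-dimensional and hence contains integer points), and any such $(p,q)$ yields a candidate $\beta:=\alpha+pv_1+qv_2$ that automatically lies in $\mathbb{N}_0^2$. The main obstacle is to realize such a $\beta$ as a composition of $1$-extensions staying in $\mathbb{N}_0^2$. This is carried out in the spirit of the Euclidean argument used in the proof of Theorem~\ref{thm:Tdproj}: one alternates $1$-extensions in the $v_1$- and $v_2$-directions (or their opposites), reducing at each stage to a lattice point closer to the target, with the inequalities $(\alpha)_j\geq |(v_i)_j|$ for those indices $j$ with $(v_i)_j<0$ (inherited from $\alpha_1,\alpha_2\in\mathbb{N}_0^2$) providing the room needed for each reflection.
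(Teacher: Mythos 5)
Your reductions are correct as far as they go: the direct cases, the two-step realization of $\alpha-v_1-v_2$ together with the check that the intermediate reflection $2\alpha-\alpha_i$ stays in $\mathbb{N}_0^2$, and the axis case are all fine. But the ``remaining case'' is exactly where the content of the lemma lies, and there you have not given a proof. Producing an integer pair $(p,q)$ with $pv_1+qv_2\in\mathbb{N}^2$ is the easy part; the problem is to exhibit a chain of $1$-extensions every one of whose outputs lies in $\mathbb{N}_0^2$, and ``in the spirit of the Euclidean argument'' specifies neither which move to make at each stage, nor why that move is admissible, nor why the process terminates. Admissibility is genuinely delicate here: for $\alpha=(0,5)$, $v_1=(1,-1)$, $v_2=(3,-2)$ (a configuration in which no $\pm v_1\pm v_2$ lies in $\mathbb{N}^2$, so your remaining case is nonvacuous), the combination $v_1-v_2=(-2,1)$ is not $\alpha$-admissible even though $v_1$ and $v_2$ are. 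The inequalities $(\alpha)_j\geq|(v_i)_j|$ that you invoke are inherited only by $v_1$ and $v_2$ themselves, not by the combinations your algorithm would need, so they do not ``provide the room'' for an arbitrary reflection; only one of $\pm(v_1-v_2)$ is available, and which one it is constrains the moves you can make.

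The paper closes precisely this gap with a potential-function argument. It introduces the negativity index $\ind(v_1,v_2)=\sum_{j}\min\bigl(0,\min_i (v_i)_j\bigr)$, a nonpositive integer, and checks that for each of the two surviving sign patterns (both vectors of type $(+,-)$, or one of type $(+,-)$ and one of type $(-,+)$, with $+$ meaning nonnegative) one can replace one of the two vectors by an $\alpha$-admissible combination $v_1\pm v_2$ in such a way that the index strictly increases and linear independence is preserved. Since the index is bounded above by $0$, the iteration terminates with a nonzero vector whose entries all have the same sign; a preliminary reduction (replace $\beta$ by $\alpha+m(\beta-\alpha)\pm(\tau-\alpha)$ for large $m$ and a suitable $\tau$ in $T$) then upgrades nonnegative entries to strictly positive ones. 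To complete your write-up you must supply such a monotone quantity together with the case check that an index-increasing admissible move always exists; as it stands, the final case is a plan rather than a proof.
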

	
	We will in the proof of this lemma and later, in its more elaborate 
	$3$-dimensional counterpart, make use of the following quantity.
	\begin{definition}
		Given a set $U$ of $d$ linearly independent vectors $u = 
		(u_1,\ldots,u_d)$ in $\mathbb{Z}^d$, we define the \emph{negativity 
			index of} $U$ as
		\[\ind(U):=\sum_{j=1}^d \min\big(0, \min_{u\in U}u_j\big).\]
	\end{definition}
	
	The vectors $u$ will be assumed to relate to a fixed point $\alpha$ in 
	$\mathbb{N}^d_0$ by the condition that $\alpha+u$ be in $\mathbb{N}_0^d$ as 
	well. When this holds, we say that $u$ is $\alpha$-\emph{admissible}. Our 
	goal will be to successively change $U$ by making $1$- or $2$-extensions of 
	$\alpha+U$ to get to new vectors with a larger negativity index. It will be 
	crucial that linear independence of the vectors of $U$ be preserved during 
	the course of this iteration.
	\begin{proof}
		[Proof of Lemma~\ref{lem:positive2}] We begin by noting that it 
		suffices to find a point $\beta$ in $E_1^{\infty}(T) \setminus T$ with 
		$\beta-\alpha$ in $\mathbb N_0^2$. Indeed, should one of the entries of 
		$\beta-\alpha$ be $0$, we may replace $\beta$ by either 
		$\alpha+m(\beta-\alpha)+(\tau-\alpha)$ or 
		$\alpha+m(\beta-\alpha)-(\tau-\alpha)$ for a sufficiently large $m$, 
		where $\tau$ is one of the two other points in $T$. Since $\dim (T)=2$, 
		both entries of either $m(\beta-\alpha)+(\tau-\alpha)$ or 
		$\alpha+m(y-\alpha)-(\tau-\alpha)$ will be positive for at least one 
		such $\tau$. It is plain that the corresponding point 
		$\alpha+m(\beta-\alpha)\pm (\tau-\alpha) $ will lie in $E_1^\infty(T)$.
		
		Now fix a point $\alpha$ in $T$, and let $v_1$, $v_2$ be the vectors 
		going from $\alpha$ to the other two points in $T$. It will be helpful 
		to represent an entry in any of the two vectors $v_1$, $v_2$ 
		symbolically by $+$ if it is nonnegative and $-$ if it is negative. If 
		one of the $v_j$, say $v_1$, is of the form $(+,+)$, then we may choose 
		$\beta=\alpha+v_1$. Similarly, if $v_1$ is of the form $(-,-)$, then we 
		choose $\beta=\alpha-v_1$. It remains therefore only to consider the 
		two combinations $(+,-)$, $(+,-)$ and $(+,-)$, $(-,+)$, where we in 
		either case may assume that all plus signs correspond to positive 
		entries.
		
		In the first case, at least one of the two vectors $v_1-v_2$ and 
		$v_2-v_1$ will be $\alpha$-admissible. If, say, $v_1-v_2$ is 
		$\alpha$-admissible, then we observe that $\ind(v_1, 
		v_1-v_2)>\ind(v_1,v_2)$. In the second case, we have plainly $\ind(v_1, 
		v_1+v_2)>\ind(v_1,v_2)$.
		
		After this initial iteration, we have two new linearly independent 
		$\alpha$-admissible vectors with a larger negativity index. We are done 
		if one of the vectors is of the form $(+,+)$ or $(-,-)$. Otherwise we 
		repeat the iteration. Since the negativity index increases in each 
		step, this iteration will eventually terminate with one of the vectors 
		being of the form $(+,+)$ or $(-,-)$. This vector is necessarily 
		nonzero because the two vectors are linearly independent. 
	\end{proof}
	
	We turn to the final and most difficult case $d=k=3$. By 
	Theorem~\ref{thm:shapirogeometric} and Example~\ref{ex:d3k3}, it remains to 
	show that $E_2^\infty(T)=\Lambda(T)\cap \mathbb N_0^3$ when $T$ is a subset 
	of $\mathbb N^3_0$ with $\dim (T)=3$. Again appealing to 
	Lemma~\ref{lem:pluscompletion}, we see that this follows from the following 
	lemma.
	\begin{lemma}\label{lem:positive} 
		Let $T$ be a set of four affinely independent points in 
		$\mathbb{N}^3_0$. Then for every $\alpha$ in $T$ there exists a point 
		$\beta$ in $E_2^\infty(T)\setminus\{\alpha\}$ such that $\beta-\alpha$ 
		is in $\mathbb{N}^3$. 
	\end{lemma}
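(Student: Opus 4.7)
The plan is to adapt the strategy of Lemma~\ref{lem:positive2}, using the negativity index $\ind$ as a monovariant. Fix $\alpha \in T$ and let $v_1, v_2, v_3$ be the three vectors from $\alpha$ to the other points of $T$; by affine independence of $T$, these form a linearly independent $\alpha$-admissible triple $U$. I will modify $U$ iteratively, each step replacing one vector by another vector $w$ such that $\alpha+w\in E_2^\infty(T)$, while preserving linear independence and $\alpha$-admissibility and strictly increasing $\ind(U)$. Since $\ind$ is bounded above by $0$, the procedure terminates. The output will be a triple for which the desired $\beta$ can be exhibited.

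The termination step is the following observation: once $\ind(U)=0$, all three $v_i$ lie in $\mathbb{N}_0^3$, and then $v_1+v_2+v_3$ has strictly positive coordinates. Indeed, a zero in coordinate $j$ would force $(v_i)_j=0$ for every $i$, contradicting linear independence. On the other hand, $\alpha+v_1+v_2$ belongs to $E_2(T)$ by the $2$-extension $(\alpha+v_1)+(\alpha+v_2)-\alpha$, and a subsequent $1$-extension against $\alpha+v_3$ produces $\alpha+v_1+v_2+v_3\in E_2^\infty(T)$. This point is the desired $\beta$.

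The heart of the proof is therefore the iteration step: given $U$ with $\ind(U)<0$, produce a new admissible, linearly independent triple $U'$ with $\ind(U')>\ind(U)$. Fix a coordinate $j^*$ with $\min_i (v_i)_{j^*}<0$ and split according to the signs of $(v_1)_{j^*},(v_2)_{j^*},(v_3)_{j^*}$. If only one vector, say $v_1$, is negative in position $j^*$, then among the $1$-extensions $v_1+v_i$ and the $2$-extensions $v_1+v_2+v_3$, $v_1+2v_i$, $v_1+v_2-v_3$ one finds a candidate $w$ that is $\alpha$-admissible (the constraints $\alpha_\ell\geq -(v_i)_\ell$ leave enough room, as in the proof of Lemma~\ref{lem:Tdplane} and of Lemma~\ref{lem:positive2}) and that satisfies $w_{j^*}>(v_1)_{j^*}$, which strictly raises the $j^*$-contribution to $\ind$. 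If two or all three of the $v_i$ are negative in position $j^*$, one instead uses differences $v_i-v_k$ (a $1$-extension), where $v_k$ has the largest $j^*$-entry among the offenders: at least one of $v_i-v_k$ or $v_k-v_i$ is $\alpha$-admissible by the argument already used for Lemma~\ref{lem:positive2}, and it strictly raises the minimum in position $j^*$. Linear independence of the replacement triple is automatic, as $w$ is chosen so as not to lie in the span of the two vectors that remain.

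The main obstacle is the case analysis: there are seven non-trivial sign patterns for each vector, and three coordinates to track simultaneously, so one has many more configurations than in Lemma~\ref{lem:positive2}. The subtlety is to pick, in each configuration, a combination $w$ that is $\alpha$-admissible in \emph{every} coordinate while strictly improving position $j^*$; a move that improves $j^*$ may worsen another coordinate. The key organizing principle is that, among the candidates listed above, one can keep the negative entries under control by using the hypothesis that each $v_i$ was already $\alpha$-admissible, so that $(v_i)_\ell\geq -\alpha_\ell$, and by sometimes replacing a $1$-extension by a $2$-extension that corrects for an offending coordinate (for instance, replacing $v_i+v_j$ by $v_i+v_j-v_k$ when $(v_i+v_j)_\ell<-\alpha_\ell$ but $(v_k)_\ell>0$).
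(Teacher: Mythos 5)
Your overall framework coincides with the paper's: introduce the vectors $v_1,v_2,v_3$ from $\alpha$ to the other points of $T$, keep them $\alpha$-admissible and linearly independent, and use the negativity index as a monovariant that is pushed up by $1$- and $2$-extensions until a vector with nonnegative entries appears (after which summing and linear independence give strictly positive entries). The termination step, and the fact that your listed moves preserve linear independence, are fine.

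The gap is in the iteration step, which is where essentially all the work of this lemma lies, and your sketch of it rests on claims that are false in three dimensions. First, the assertion that ``at least one of $v_i-v_k$ or $v_k-v_i$ is $\alpha$-admissible by the argument already used for Lemma~\ref{lem:positive2}'' fails for $d=3$: take $\alpha=(0,5,0)$, $v_i=(3,-1,0)$, $v_k=(0,-2,4)$ (both $\alpha$-admissible, both of sign pattern $(+,-,+)$); then $v_i-v_k=(3,1,-4)$ and $v_k-v_i=(-3,-1,4)$, and neither is $\alpha$-admissible. The two-dimensional argument works because the single ``bad'' coordinate of a difference of two same-pattern vectors is automatically controlled by admissibility of the minuend; with three coordinates there are two free coordinates, and the difference that is nonnegative in one of them may be inadmissibly negative in the other. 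Second, even when a move is admissible and raises the minimum in your chosen coordinate $j^*$, it can lower the minimum in another coordinate, so $\ind$ need not increase overall; you acknowledge this obstacle but address it only with the heuristic of ``keeping the negative entries under control,'' which is not a proof. The configuration in which all three vectors have pattern $(+,-,+)$ (display \eqref{eq:pluses} in the paper) is exactly where both problems bite at once: the paper's treatment of that single case requires combinations such as $v_1+v_2-v_3$, $2v_3-v_1$ and $v_1+v_2-2v_3$, a preliminary normalization $2v_{i,2}<\min_j v_{j,2}$, and nested sub-iterations, none of which is recoverable from your list of candidate moves. Until each sign configuration is shown to admit an admissible, independence-preserving move that either produces a $(+,+,+)$ vector outright or strictly increases $\ind$, the induction has no engine and the argument is incomplete.
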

	\begin{proof}
		We begin as in the preceding case by noting that it suffices to find a 
		$\beta$ in $E_2^{\infty}(T) \setminus T$ with $\beta-\alpha$ in 
		$\mathbb N_0^3$. Should only one of the entries be $0$, we may make a 
		similar adjustment as in the proof of Lemma~\ref{lem:positive2}. Should 
		two of the entries be $0$, then we make the following elaboration of 
		this argument. Since $\dim(T)=3$, we can find two points $\tau_1$ and 
		$\tau_2$ in $T$ such that $\beta-\alpha$ is not in the plane spanned by 
		$\tau_1-\alpha$ and $\tau_2-\alpha$. We now claim that we may replace 
		$\beta$ by $\alpha+m(\beta-\alpha)+k(\tau_1-\alpha)+\ell 
		(\tau_2-\alpha)$ for a large positive integer $m$ and suitable integers 
		$k$ and $\ell$. We see that this can be achieved by applying 
		Lemma~\ref{lem:positive2} to the two entries of $\alpha, \tau_1, 
		\tau_2$ for which $\beta-\alpha$ is $0$. 
		
		We now turn to the sequence of $2$-extensions needed to reach the 
		desired point $\beta$, starting from any of the points $\alpha$ in $T$. 
		Our plan is to act in the same way as was done in the case $d=2$. Hence 
		we wish to prove that there exists a $k$ such that at least one of the 
		two assertions is true: 
		\begin{itemize}
			\item[(i)] There is a nonzero vector in $E^k_2(T)-\alpha$ with only 
			nonnegative entries. 
			\item[(ii)] There are three linearly independent vectors $v_1'$, 
			$v_2'$, $v_3'$ in $E^k_2(T)-\alpha$ such that
			\[\ind(v_1',v_2',v_3')<\ind(v_1,v_2,v_3).\]
		\end{itemize}
		We are done if (i) is true, and if (ii) holds, then the argument can be 
		iterated, starting with the $\alpha$-admissible vectors $v_1'$, $v_2'$, 
		$v_3'$ in place of $v_1$, $v_2$, $v_3$. Our algorithm will be such that 
		the initial assumption that $v_1$, $v_2$, $v_3$ be linearly independent 
		will automatically guarantee that $v_1'$, $v_2'$, $v_3'$ be linearly 
		independent. It is clear that this iteration will eventually produce a 
		nonzero vector in $E^k_2(T)-\alpha$ for some $k$.
		
		We begin by identifying combinations of signs of the entries of the 
		vectors $v_j$ that immediately lead to the desired $\beta$. To this 
		end, we represent again an entry symbolically by $+$ if it is 
		nonnegative and $-$ if it is negative. If one of the $v_j$, say $v_1$, 
		is of the form $(+,+,+)$, then we may choose $\beta=\alpha+v_1$. 
		Similarly, if $v_1$ is of the form $(-,-,-)$, then we choose 
		$\beta=\alpha-v_1$. We assume therefore that neither of the vectors 
		$v_j$ are of these kinds. 
		
		We have found it convenient to group our treatment of the remaining 
		nontrivial combinations of signs into seven cases. The first three 
		cases deal only with combinations of two vectors; we are then either 
		able to reach the desired increase of the negativity index or we are led 
		to consider a combination of signs of three vectors which is then 
		treated later.
		
		\subsubsection*{Case 1: $(+,-,+)$ and $(+,+,-)$} Suppose that $v_1$ is 
		of the form $(+,-,+)$ and $v_2$ is of the form $(+, +,-)$. Then plainly 
		$v_1+v_2$ is again $\alpha$-admissible. We may assume that at least one 
		of the two entries $v_{1,3}$ and $v_{2,2}$ is nonzero. Indeed, if this 
		were not the case, then at least one of the vectors $v_1-v_2$ and 
		$v_2-v_1$ would be $\alpha$-admissible, and then we could replace $v_1$ 
		by $v_1-v_2$ or $v_2$ by $v_2-v_1$ to force one of the entries in 
		question to be nonzero. If, say $v_{1,3}\neq 0$, then we will increase 
		the minimal value of the third entry if we replace $v_2$ by $v_1+v_2$. 
		If the new vector $v_1+v_2$ is of the form $(+,+,+)$, then we are 
		plainly done; if it is of the form $(+,+,-)$, then we may iterate the 
		same argument, now applying it to the two vectors $v_1$ and $v_1+v_2$. 
		If it is of the form $(+,-,+)$ or $(+,-,-)$, then we bring in $v_3$ and 
		note that we have increased $\ind(\{v_1,v_2,v_3\})$ unless $v_{3,3}<0$. 
		If $v_3$ is either of the form $(+,+,-)$ or $(-,+,-)$, we would then 
		achieve the desired increase of the negativity index by replacing $v_3$ 
		by $v_3+v_1$. If both $v_1+v_2$ and $v_3$ are of the form $(+,-,-)$, 
		then we obtain the desired increase by replacing $v_3$ by one of the 
		vectors $\pm(v_3-v_1-v_2)$. The only remaining case to be considered is 
		therefore that $v_1+v_2$ is of the form $(+,-,+)$ and $v_3$ is of the 
		form $(+,-,-)$. We will treat it as Case 6 below.
		
		\subsubsection*{Case 2: $(+,-,+)$ and $(-,+,-)$} Suppose next that 
		$v_1$ is of the form $(+,-,+)$ and $v_2$ is of the form $(-,+,-)$. We 
		have a nontrivial situation if both $v_{2,2}>0$ and at least one of the 
		two entries $v_{1,1}$ and $v_{1,3}$ is positive. Assume, say, that 
		$v_{1,3}>0$. We may assume that $v_{1}+v_2$ is not of the form 
		$(-,+,-)$, since otherwise $v_1$ and $v_1+v_2$ are two vectors of the 
		same form as the initial ones, and thus we may iterate the argument. 
		Now if $v_1+v_2$ is of one of the forms $(-,+,+)$ or $(+,+,-)$, then we 
		are back to the preceding case and may proceed accordingly. The 
		remaining possibilities are that $v_1+v_2$ is of one of the forms 
		$(-,-,+)$, $(+,-,-)$, $(+,-,+)$. We bring again in $v_3$ which must be 
		of the form $(+,-,-)$ unless we already achieved the desired increase of 
		the negativity index. Should $v_1+v_2$ be of the form $(+,-,-)$, then we 
		may $v_3$ replace by one of the vectors $\pm(v_3-v_1-v_2)$. The two 
		remaining cases will be dealt with as respectively Case 6 and Case 7 
		below. 
		
		\subsubsection*{Case 3: $(+,-,-)$ and $(+,-,-)$} If we have a 
		combination with $v_1$ of the form $(+,-,-)$ and $v_2$ of the form 
		$(+,-,-)$, then it is plain that at least one of the two vectors 
		$v_1-v_2$ and $v_2-v_1$ is $\alpha$-admissible. Should the new vector 
		$u$ be of the same form, we may iterate the argument with $v_2$ 
		replaced by $u$. Then after a finite number of iterations, we either 
		reach a vector of the desired form $(+,+,+)$ or we end up with a 
		combination like $(+,-,-)$ and $(+,-,+)$. This situation is covered by 
		Case 6 and Case 7 below.
		
		Up to inessential permutations, it now remains to check the following 
		possible combinations of signs: 
		\begin{equation}\label{eq:case4} 
			\begin{pmatrix}
				v_1 \\
				v_2 \\
				v_3 
			\end{pmatrix}
			= 
			\begin{pmatrix}
				- & - & + \\
				+ & - & - \\
				- & + & - 
			\end{pmatrix}
			; 
		\end{equation}
		\begin{equation}
			\begin{pmatrix}\label{eq:pluses} 
				v_1 \\
				v_2 \\
				v_3 
			\end{pmatrix}
			= 
			\begin{pmatrix}
				+ & - & + \\
				+ & - & + \\
				+ & - & + 
			\end{pmatrix}
			; 
		\end{equation}
		\begin{equation}\label{eq:zero} 
			\begin{pmatrix}
				v_1 \\
				v_2 \\
				v_3 
			\end{pmatrix}
			= 
			\begin{pmatrix}
				- & - & + \\
				+ & - & + \\
				+ & - & + 
			\end{pmatrix}
			; 
		\end{equation}
		\begin{equation}\label{eq:nonzero} 
			\begin{pmatrix}
				v_1 \\
				v_2 \\
				v_3 
			\end{pmatrix}
			= 
			\begin{pmatrix}
				- & - & + \\
				+ & - & - \\
				+ & - & + 
			\end{pmatrix}
			. 
		\end{equation}
		
		\subsubsection*{Case 4: The combination \eqref{eq:case4}} If a 
		nonnegative entry, in absolute value, is less than or equal to the two 
		other entries in the same column, then we may replace the corresponding 
		vector by $v$ by $-v$ without changing the negativity index of the 
		three vectors $v_1$, $v_2$, $v_3$. That leads us to \eqref{eq:nonzero} 
		(see below), that will be treated later. Otherwise, $v_1+v_2+v_3$ is 
		$\alpha$-admissible, and we increase the negativity index if we replace 
		the three vectors $v_1$, $v_2$, $v_3$ by $v_1$, $v_2$, $v_1+v_2+v_3$. 
		Notice that $\alpha+v_1+v_2+v_3$ is indeed in 
		$E_2(\{\alpha,\alpha+v_1,\alpha+v_2, \alpha+v_3\})$ because
		\[\alpha+v_1+v_2+v_3=(\alpha+v_1)+(v_2-v_1)+(v_3-v_1)-(-v_1).\]
		
		\subsubsection*{Case 5: The combination \eqref{eq:pluses}} We may 
		assume that $2v_{i,2}<\min_{j} v_{j,2}$ since otherwise we may add $v_i$ 
		to $v_i$ as many times as needed to achieve this. These operations will 
		not change the negativity index of the three vectors. Suppose that the 
		largest value in the first column is $v_{1,1}$. In this case, if 
		$v_{1,3}\geq v_{i,3}$ for $i\neq 1$, we then get a larger value in the 
		second entry by replacing $v_1$ by $v_1-v_i$. Iterating this argument, 
		we see that we may assume that $v_{1,3}<v_{i,3}$ for $i\neq 1$. Assume 
		similarly that the maximum in the third column is $v_{2,3}$ and that 
		$v_{2,1}<v_{i,1}$ for $i\neq 2$. Assume first that 
		$v_{1,2}=v_{2,2}=v_{3,2}$. Then $v_4:=v_{1}+v_2-v_3$ has the same 
		second entry but with $v_{2,1} \leq v_{4,1}<v_{1,1}$ and $v_{1,3} \leq 
		v_{4,3}<v_{2,3}$. If we have equality in any of the two inequalities to 
		the left, then $v_2-v_4$ or $v_1-v_4$ will be of the form $(+,+,+)$ so 
		that we have reached the desired $\beta$. Otherwise we replace $v_1$ by 
		$v_4$, which implies that we have decreased the first entry and 
		increased the third entry of the first vector. Iterating, we see that 
		we will then reach our desired $\beta$ in a finite number steps. 
		
		Hence we may assume in what follows that $v_{i,2}$ is not the same for 
		all $i=1,2,3$. If now $v_{3,2}< \max(v_{1,2},v_{2,2})$, then 
		$v_4:=v_{1}+v_2-v_3$ is $\alpha$-admissible, and its second entry is $> 
		\min(v_{2,2}, v_{1,2})$. Hence, if say $v_{1,2}=\max(v_{1,2}, 
		v_{2,2})$, then the negativity index of the vectors $v_1$, $v_3$, 
		$v_{4}$ is strictly larger than that of $v_1$, $v_2$, $v_3$. If 
		$v_{3,2}= \max(v_{1,2},v_{2,2})$, then still $v_4$ is 
		$\alpha$-admissible and $v_{4,2}=\min(v_{2,2}, v_{1,2})$. If, say, 
		again $v_{1,2}=\max(v_{1,2}, v_{2,2})$, then we may replace $v_2$ by 
		$v_4$ and iterate the arguments just given. To simplify the notation, 
		let $v_2$ denote the replacement found for $v_2$ at any stage of the 
		iteration. If eventually $v_{2,3}=v_{3,1}$, then the iteration will 
		terminate because $v_1-v_2$ will be of the form $(+,+,+)$. Otherwise, 
		since $v_{2,3}$ will be strictly decreasing as long as 
		$v_{2,3}>v_{3,3}$, we see that $v_{2,3}$ will eventually be smaller than 
		or equal to $v_{3,3}$. If also $v_{3,1}\geq v_{2,1}$, then $v_3-v_2$ 
		will now be of the form $(+,+,+)$. Should this not be the case, then we 
		may interchange the roles of $v_2$ and $v_3$ and eventually obtain that 
		$v_{3,2}< \max(v_{1,2},v_{2,2})$, as in the preceding case. 
		
		It remains to consider the case when both $v_{3,2}> v_{1,2}$ and 
		$v_{3,2} > v_{2,2}$. If both $v_{1,1}\geq 2v_{3,1}$ and $v_{2,3}\geq 
		2v_{3,3}$, then we see that $v_4:=v_1+v_2-2v_3$ will be 
		$\alpha$-admissible. Indeed, since by assumption 
		$2v_{3,2}<\min(v_{1,2},v_{2,2})$, wee see that 
		$v_{4,2}>\max(v_{1,2},v_{2,2})$. If, say, $v_{1,2}<v_{2,2}$, then the 
		negativity index of $v_4$, $v_2$, $v_3$ will exceed that of $v_1$, 
		$v_2$, $v_3$. Otherwise, if $v_{1,2}=v_{2,2}$, then we replace $v_1$ by 
		$v_4$ and return to the starting point of the argument, noting that our 
		gain in this first step is a strict increase of the second entry of the 
		vector $v_1$.
		
		Finally, if we have $v_{1,1}< 2v_{3,1}$, then the vector 
		$v_4:=2v_3-v_1$ will satisfy $v_{4,2}>\min(v_{1,2}, v_{2,2})$, and if we 
		have $v_{2,3}<2v_{3,3}$, then $v_4:=2v_3-v_2$ will satisfy the same 
		inequality. We may in either case proceed in exactly the same way as 
		when both $v_{1,1}\geq 2v_{3,1}$ and $v_{2,3}\geq 2v_{3,3}$.
		
		\subsubsection*{Case 6: The combination \eqref{eq:zero}} We may assume 
		that the largest value in the first column is $v_{2,1}$. If 
		$v_{2,3}\geq v_{1,3}$, then we see that either $v_2-v_1$ is of the form 
		$(+,+,+)$ or we reach Case 5 by replacing $v_1$ by the 
		$\alpha$-admissible vector $v_2-v_1$. So we may assume that $v_{2,3}< 
		v_{1,3}$. If instead $v_{3,3}\geq v_{1,3}$, then in a similar fashion 
		$v_3-v_1$ is of the form $(+,+,+)$ or we may replace $v_1$ by $v_3-v_1$ 
		to once again return to Case 5. We may therefore assume that both 
		$v_{1,3}>v_{2,3}$ and $v_{1,3}>v_{3,3}$. If now $v_{2,3}\geq v_{3,3}$, 
		then $v_2-v_3$ is $\alpha$-admissible; if it is not of the form 
		$(+,+,+)$, then we may replace $v_2$ by $v_2-v_3$ and repeat the 
		reasoning just made. This iteration will either produce a vector of the 
		form $(+,+,+)$ or a situation in which $v_{2,3}$ is maximal in the 
		first column and $v_{2,3}<v_{3,3}<v_{1,3}$. From this point on, we may 
		follow word for word the reasoning in the preceding Case 5, now with the 
		roles of $v_1$ and $v_2$ interchanged. 
		
		\subsubsection*{Case 7: The combination \eqref{eq:nonzero}} Assume 
		first that $v_{3,3}$ is the largest value in the third column. Then we 
		may replace $v_1$ by $v_3-v_1$ so that either $v_3-v_1$ is of the form 
		$(+,+,+)$ or we have reduced our problem to the preceding Case 6. 
		Similarly, if $v_{3,1}$ is the largest value in the first column, then 
		either $v_3-v_2$ is of the form $(+,+,+)$ or we reduce our problem to 
		Case 6 by replacing $v_2$ by $v_3-v_2$. We consider the final 
		possibility that $v_{2,1}$ is maximal in the first column and $v_{1,3}$ 
		is maximal in the third column. Hence we may assume that 
		$v_{2,1}>v_{3,1}$ and $v_{1,3}>v_{3,3}$. We have now plainly that 
		$v_{1,1}$ is minimal in the first column and that $v_{2,1}$ is minimal 
		in the third column. This allows us to follow word for word the 
		reasoning in Case 5, again with the roles of $v_1$ and $v_2$ 
		interchanged. 
	\end{proof}

	\section{\texorpdfstring{$H^p(\mathbb{T}^\infty)$}{Hp(Tinfty)} and 
		applications to Hardy spaces of Dirichlet series} \label{sec:bohr}
	
	\subsection{Hardy spaces on the infinite-dimensional torus} 
	\label{subsec:dinfty} Since $\mathbb T^\infty$ is a compact abelian group, 
	Theorem~\ref{thm:Tdproj} remains true for $d=\infty$ if we use the Haar 
	measure $m_{\infty}$ of $\mathbb T^\infty$ to define $L^p(\mathbb 
T^\infty)$. To 
	this end, we may as before either rely on combining the results of Rudin 
	\cite{Rudin} and And\^{o} \cite{Ando66} as indicated above or simply repeat 
	our proof in Section~\ref{subsec:Tdproof} word for word. It is also plain 
	that Theorem~\ref{thm:TdprojHp} and Theorem~\ref{thm:shapirogeometric} 
	remain valid when we set $d=\infty$. In the latter case, it should be noted 
	that all subsets $\Gamma$ of $\mathbb{N}_0^{(\infty)}$ will consist of 
	points with finitely many nonzero entries and that $E_n(\Gamma)$ can be 
	defined in exactly the same way as in the finite-dimensional case. We refer 
	to \cite{BBSS19} and to \cite[Ch.~6]{QQ21} for further information about 
	the spaces $H^p(\mathbb T^\infty)$. 
	\begin{proof}
		[Proof of Theorem~\ref{thm:curlyop}] We will apply 
		Theorem~\ref{thm:shapirogeometric} with Example~\ref{ex:d3} for $n=1$ 
		and Example~\ref{ex:d4} for $n\geq2$. We go through the details only in 
		the latter case, since the former is completely analogous. Consider
		\[\Gamma_n := 
		\left\{(n,1,0,1),(n+1,0,1,0),(0,0,n+1,0),(0,0,0,n+1)\right\}\]
		for $n\geq2$. By Theorem~\ref{thm:shapirogeometric}, we know that 
		$\Gamma_n$ is a contractive projection set for $H^p(\mathbb{T}^4)$ if 
		and only if $p=2,4,\ldots, 2(n+1)$. Decompose $\mathbb{T}^\infty$ into 
		a infinite cartesian product of four-dimensional tori,
		\[\mathbb{T}^\infty = \mathbb{T}_1^4 \times \mathbb{T}_2^4 \times 
		\mathbb{T}_3^4 \times \cdots,\]
		where $\mathbb{T}_j^4$ contains the variables 
		$z_{4j-3},z_{4j-2},z_{4j-1}$, and $z_{4j}$.
		
		For $m\geq1$, let $T_{m,n}$ be the operator defined by letting the 
		projection $P_{\Gamma_n}$ act on each of the $m$ four-dimensional tori 
		$\mathbb{T}_{(m-1)m/2+1}^4,\,\ldots,\, \mathbb{T}_{m(m+1)/2}^4$ 
		independently. Clearly, 
		\begin{equation}\label{eq:norm4} 
			\|T_{m,n}\|_{H^p(\mathbb{T}^\infty)\to H^p(\mathbb{T}^\infty)} = 
			\|P_{\Gamma_n}\|_{H^p(\mathbb{T}^4)\to H^p(\mathbb{T}^4)}^m. 
		\end{equation}
		Define the operator $T_n$ by 
		\begin{equation}\label{eq:Tmn} 
			T_n f = \sum_{m=1}^\infty \frac{T_{m,n} f}{m^2}. 
		\end{equation}
		The operator \eqref{eq:Tmn} is well-defined for $f$ in 
		$H^p(\mathbb{T}^d)$ for every finite $d$, since in this case $T_{m,n} f 
		= 0$ for every sufficiently large $m$. From this we conclude that $T_n$ 
		is densely defined on $H^p(\mathbb{T}^\infty)$ (see 
		e.g.~\cite[Thm.~2.1]{BBSS19}).
		
		We first consider the case when $p=2k$ for some integer $1 \leq k \leq 
		n+1$. Since $\|P_{\Gamma_n}\|_{H^p(\mathbb{T}^4)\to 
			H^p(\mathbb{T}^4)}=1$ by Theorem~\ref{thm:shapirogeometric}, we get 
		from \eqref{eq:norm4} and the triangle inequality that
		\[\|T_n f\|_p \leq \frac{\pi^2}{6} \|f\|_p,\]
		so the operator \eqref{eq:Tmn} is well-defined on 
		$H^p(\mathbb{T}^\infty)$ with norm at most $\pi^2/6$.
		
		Consider next the case when $1 \leq p \leq \infty$, $p\neq 2n$, for $1 
		\leq k \leq n+1$. Since $P_{\Gamma_n}$ is not a contraction on 
		$H^p(\mathbb{T}^4)$ we have
		\[\|T_{m,n}\|_{H^p(\mathbb{T}^\infty)\to H^p(\mathbb{T}^\infty)} = 
		(1+\delta_p)^m\]
		for some $\delta_p>0$. Since each $T_{m,n}$ acts on separate variables, 
		we get from \eqref{eq:Tmn} that
		\[\|T_n\|_{H^p(\mathbb{T}^\infty)\to H^p(\mathbb{T}^\infty)} \geq 
		\frac{(1+\delta_p)^m}{m^2}\]
		for every positive integer $m$ and, consequently, $T_n$ is unbounded on 
		$H^p(\mathbb{T}^\infty)$. 
	\end{proof}
	\begin{problem}\label{prob:operator} 
		Is there a linear operator $T$ that is densely defined on 
		$H^p(\mathbb{T}^\infty)$ for $1 \leq p \leq \infty$ and extends to a 
		bounded operator on $H^p(\mathbb{T}^\infty)$ if and only if $p$ is an 
		even integer or $p=\infty$? 
	\end{problem}
	
	\subsection{Hardy spaces of Dirichlet series} \label{subsec:curly} For $1 
	\leq p < \infty$, the Hardy space of Dirichlet series $\mathscr{H}^p$ can 
	be defined as the closure of the set of Dirichlet polynomials $f(s) = 
	\sum_{n=1}^N a_n n^{-s}$ in the Besicovitch norm
	\[\|f\|_{\mathscr{H}^p}^p = \lim_{T\to \infty} \frac{1}{2T} \int_{-T}^T 
	|f(it)|^p\,dt.\]
	The endpoint case $\mathscr{H}^\infty$ is comprised of somewhere convergent 
	Dirichlet series that may be analytically continued to bounded analytic 
	functions in the half-plane $\mre{s}>0$, and we set
	\[\|f\|_{\mathscr{H}^\infty} := \sup_{\mre{s}>0}|f(s)|.\]
	Let $(p_j)_{j\geq1}$ denote the increasing sequence of prime numbers. By 
	the fundamental theorem of arithmetic, every positive rational number is 
	uniquely represented as
	\[q = \prod_{j=1}^d p_j^{\alpha_j} \qquad \longleftrightarrow \qquad 
	\alpha(q) = (\alpha_1,\alpha_2,\ldots,\alpha_d,0,0,0,\ldots).\]
	This representations associates to each $q$ in $\mathbb{Q}_+$ a unique 
	multi-index $\alpha$ in $\mathbb{Z}^{(\infty)}$. It follows that the groups 
	$(\mathbb{Z}^{(\infty)},+)$ and $(\mathbb{Q}_+,\times)$ are isomorphic. 
	Note that the subset $\mathbb{N}_0^{(\infty)}$ of $\mathbb{Z}^{(\infty)}$ 
	is identified with the subset $\mathbb{N}$ of $\mathbb{Q}_+$. 
	
	The Bohr correspondence
	\[\mathscr{B}f(z) := \sum_{n=1}^\infty a_n z^{\alpha(n)}\]
	defines an isometric isomorphism from $\mathscr{H}^p$ to 
	$H^p(\mathbb{T}^\infty)$. In the range $1 \leq p < \infty$ this can be 
	established either by the ergodic theorem (see \cite[Sec.~2]{Bayart02}) or 
	by a simple argument using the Weierstrass approximation theorem (see 
	\cite[Sec.~3]{SS09}). For $p=\infty$ we can prove the isometric isomorphism 
	by taking the limit $p\to\infty$ and using Bohr's theorem \cite{Bohr13}, 
	which guarantees that Dirichlet series in $\mathscr{H}^\infty$ are 
	uniformly convergent in the half-plane $\mre{s}\geq \delta$ for every 
	$\delta>0$.
	
	A set $\Gamma \subseteq \mathbb{N}$ is called a contractive projection set 
	for $\mathscr{H}^p$ if the projection $P_\Gamma f(s) := \sum_{n \in \Gamma} 
	a_n n^{-s}$ is a contraction on $\mathscr{H}^p$. In view of the discussion 
	in Section~\ref{subsec:dinfty}, we may then translate 
	Theorem~\ref{thm:TdprojHp} for the special case $d=\infty$ into a 
	corresponding assertion about $\mathscr{H}^p$. We refrain from carrying out 
	the details, noting only that we will then be dealing with restrictions of 
	sets and cosets in $\mathbb{Q}_+$ to $\mathbb{N}$.
	
	In this context, computations with contractive project sets for 
	$\mathscr{H}^p$ will frequently involve arithmetic functions. A useful 
	example, alluded to in the introduction and employed in \cite{BS16}, is the 
	set
	\[\Gamma_m := \left\{n \in \mathbb{N}\,:\,\Omega(n)=m\right\},\]
	where $\Omega(n)$ denotes the number of prime factors of $n$ (counting 
	multiplicities). As in Example~\ref{ex:mhom}, the formula for the 
	projection is
	\[P_{\Gamma_m}f(s) = \int_{\mathbb{T}} \left(\sum_{n=1}^\infty a_n 
	w^{\Omega(n)} n^{-s} \right)\,w^{-m}\,dm_1(w)\]
	for $f(s) = \sum_{n\geq1} a_n n^{-s}$ in $\mathscr{H}^p$.
	
	We may reformulate Theorem~\ref{thm:curlyop} to obtain the following result.
	\begin{corollary}\label{cor:curlyop} 
		Fix an integer $n\geq1$. There is a linear operator $T_n$ which is 
		densely defined on $\mathscr{H}^p$ for every $1 \leq p \leq \infty$, 
		and which does not extend to a bounded operator on $\mathscr{H}^p$ 
		unless $p=2,4,\ldots,2(n+1)$.
	\end{corollary}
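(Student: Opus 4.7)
The plan is to transfer the operator $T_n$ from Theorem~\ref{thm:curlyop} out of $H^p(\mathbb{T}^\infty)$ and into $\mathscr{H}^p$ through the Bohr correspondence. Recall that $\mathscr{B}: \mathscr{H}^p \to H^p(\mathbb{T}^\infty)$ is an isometric isomorphism for every $1\le p\le \infty$. I therefore simply set
\[\widetilde{T}_n := \mathscr{B}^{-1} \circ T_n \circ \mathscr{B},\]
and the corollary reduces to checking that the domain and norm behavior both transfer cleanly under $\mathscr{B}$.

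For the domain, the operator $T_n$ of Theorem~\ref{thm:curlyop} is naturally defined on any $f$ belonging to $H^p(\mathbb{T}^d)$ for some finite $d$, because for such $f$ only finitely many of the summands $T_{m,n}f/m^2$ in \eqref{eq:Tmn} are nonzero, so the defining series is merely a finite sum. Under the Bohr correspondence, $H^p(\mathbb{T}^d)$ identifies with the closed subspace of $\mathscr{H}^p$ consisting of Dirichlet series supported on the $p_d$-smooth integers, and the union over $d\geq 1$ of these subspaces is dense in $\mathscr{H}^p$ for every $1\le p\le \infty$ (which is the same density property invoked in the proof of Theorem~\ref{thm:curlyop}). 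Hence $\widetilde{T}_n$ is densely defined on $\mathscr{H}^p$ across the entire scale.

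For the norm behavior, the isometric property of $\mathscr{B}$ gives, for any $f$ in the dense domain,
\[\|\widetilde{T}_n f\|_{\mathscr{H}^p} = \|T_n \mathscr{B} f\|_{H^p(\mathbb{T}^\infty)} \qquad \text{and} \qquad \|f\|_{\mathscr{H}^p} = \|\mathscr{B} f\|_{H^p(\mathbb{T}^\infty)}.\]
Consequently, $\widetilde{T}_n$ extends to a bounded operator on $\mathscr{H}^p$ if and only if $T_n$ extends to a bounded operator on $H^p(\mathbb{T}^\infty)$. Invoking Theorem~\ref{thm:curlyop}, this holds precisely when $p\in\{2,4,\ldots,2(n+1)\}$, which is the desired characterization.

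No substantive obstacle appears in this argument, since all of the analytic content sits in Theorem~\ref{thm:curlyop}; the corollary is a purely formal translation by Bohr lifting. The only point one should verify explicitly is the compatibility of the dense domain with the Bohr correspondence, but this follows immediately from the fact that Dirichlet polynomials on $p_d$-smooth integers are mapped onto polynomials on $\mathbb{T}^d$, together with the standard density of such polynomials in $\mathscr{H}^p$ for all $p$.
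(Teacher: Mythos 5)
Your argument is correct and is exactly the route the paper takes: the paper states Corollary~\ref{cor:curlyop} as a direct reformulation of Theorem~\ref{thm:curlyop} via the Bohr correspondence, and your conjugation $\mathscr{B}^{-1}\circ T_n\circ\mathscr{B}$ together with the observation that $p_d$-smooth Dirichlet polynomials correspond to polynomials on $\mathbb{T}^d$ supplies precisely the (routine) details the paper leaves implicit.
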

	
	Perhaps the most important open problem in the study of the spaces 
	$\mathscr{H}^p$ is the local embedding problem \cite[Prob.~2.1]{SS16}, 
	which asks whether there is a constant $C_p>0$ such that 
	\begin{equation}\label{eq:localemb} 
		\int_{-1}^1 |f(1/2+it)|^p\,dt \leq C_p \|f\|_{\mathscr{H}^p}^p 
	\end{equation}
	for every $f$ in $\mathscr{H}^p$. The answer to the embedding problem is 
	known to be positive if $p$ is an even integer (see e.g. 
	\cite[Sec.~3]{SS09}) and negative if $p < 2$ as a corollary to a recent 
	result of Harper \cite{Harper20}. 
	
	The work of Bayart and Masty{\l}o \cite{BM19}, which we discussed in the 
	introduction, demonstrates that the standard interpolation techniques 
	cannot be employed to extend the positive conclusion for the embedding 
	problem from even integers $p$ to general $p>2$. Based on this and the 
	analogy with the Hardy--Littlewood majorant principle (elucidated in 
	\cite[Sec.~7.3]{Montgomery94}), it is conjectured in \cite[p.~274]{QQ21} 
	that the local embedding should hold only for even integers $p$ (and, 
	trivially, for $p=\infty$).
	
	The local embedding problem can be restated in terms of the boundedness of 
	a densely defined linear operator. Consider the composition operator 
	defined on $\mathscr{H}^p$ by $\mathscr{C}_\varphi f := f \circ \varphi$ for
	\[\varphi(s) := \frac{1}{2} + \frac{1-2^{-s}}{1+2^{-s}}.\]
	The operator is well-defined and bounded on $\mathscr{H}^\infty$ by Bohr's 
	theorem, so it is densely defined on $\mathscr{H}^p$ for every $1 \leq p 
	\leq \infty$. We know from \cite[Thm.~3]{BB19} that $\mathscr{C}_\varphi$ 
	is bounded on $\mathscr{H}^p$ if and only if the local embedding 
	\eqref{eq:localemb} holds. We note in passing that if $p$ is an even 
	integer or $p=\infty$, then actually 
	$\|\mathscr{C}_\varphi\|_{\mathscr{H}^p\to\mathscr{H}^p} = 2^{1/p}$ by 
	results in \cite{Brevig17} and \cite[Sec.~8.11]{QQ21}. 
	
	The operator $\mathscr{C}_\varphi$ is unbounded for $p<2$ by Harper's 
	result. Hence, if the local embedding \eqref{eq:localemb} holds for $p>2$ 
	only if $p$ is an even integer, then $\mathscr{C}_\varphi$ is the operator 
	enquired after in Problem~\ref{prob:operator}. In this context, 
	Corollary~\ref{cor:curlyop} does not render it implausible that the local 
	embedding may hold only for even integers (and $p=\infty$).
	
	\bibliographystyle{amsplain} 
	\bibliography{cpsets} 
\end{document}